\UseRawInputEncoding 

\documentclass[10pt]{article}
\usepackage[letterpaper, left=1in, top=1in, right=1in, bottom=1in, verbose, ignoremp]{geometry}
\usepackage[
appendix=inline    
]{apxproof}

\usepackage{url}\RequirePackage[colorlinks,citecolor=blue, linkcolor=blue,urlcolor = blue]{hyperref}
\usepackage{latexsym,amssymb,amsmath,amsfonts,graphicx,color,fancyvrb,amsthm,enumerate,subcaption,mathrsfs}
\usepackage[longnamesfirst,authoryear,round]{natbib}
\usepackage{xy}\xyoption{all} \xyoption{poly} \xyoption{knot}
\usepackage{float}
\usepackage{bm}
\usepackage{bbm}
\usepackage{multicol,multirow}
\usepackage[safe]{tipa}
\usepackage{array}
\usepackage{relsize}
\usepackage{chngcntr}
\usepackage{etoolbox}
\usepackage{caption}
\usepackage{tikz}
\usetikzlibrary{patterns}
\usepackage{tikz-3dplot}
\usetikzlibrary{decorations.pathreplacing,calc}
\usetikzlibrary{shapes,backgrounds}
\usetikzlibrary{patterns}
\usetikzlibrary{cd}
\usepackage{amsopn}
\usepackage{calc}
\usepackage{algorithm}
\usepackage[noend]{algpseudocode}
\usepackage{hyperref}
\usepackage{mathtools}

\usepackage{tikz-cd} 
\usepackage{amscd} 
\usepackage{comment}
\usepackage[capitalize,nameinlink,compress]{cleveref}
\crefname{figure}{Figure}{Figures} 
\crefname{equation}{}{} 
\crefname{assumption}{Assumption}{Assumptions}
\crefname{subsection}{Subsection}{Subsections}
\usepackage{textcomp}
\usepackage{mathabx}
\usepackage{thmtools}
\usepackage{thm-restate}

\newcounter{cdrow}

\newtheorem{theorem}{Theorem}[section]
\newtheorem*{theorem*}{Theorem}
\newtheorem{corollary}[theorem]{Corollary}
\newtheorem{lemma}[theorem]{Lemma}
\newtheorem{proposition}[theorem]{Proposition}

\newtheorem*{claim*}{Claim}

\theoremstyle{definition}
\newtheorem{definition}[theorem]{Definition}
\newtheorem*{definition*}{Definition}

\theoremstyle{remark}
\newtheorem{remark}[theorem]{Remark}

\newtheorem{example}[theorem]{Example}
\newtheorem*{example*}{Example}

\makeatletter
\newcommand*{\op}{%
	\DOTSB
	\mathop{\vphantom{\bigoplus}\mathpalette\matt@op\relax}%
	\slimits@
}
\newcommand\matt@op[2]{%
	\vcenter{\m@th\hbox{\resizebox{\widthof{$#1\bigoplus$}}{!}{$\boxplus$}}}%
}
\makeatother

\newcommand{\R}{\mathbb{R}}
\newcommand{\N}{\mathbb{N}}

\newcommand{\Ker}{\mathrm{Ker} \,}

\newcommand{\rank}{\mathrm{rank} \,}
\renewcommand{\L}{\mathcal{L}}

\newcommand{\T}{\intercal}

\newcommand{\argmin}{\text{argmin}}

\newcommand{\norm}[2]{\Vert #2 \Vert_{#1}}

\newcommand{\esssupp}{\mathrm{esssupp}}
\newcommand{\CType}{F_1,\dots,F_n}
\newcommand{\sample}{ X_1,\dots, X_n}

\DeclareMathOperator{\conv}{conv}
\DeclareMathOperator{\relint}{relint}
\DeclareMathOperator{\closure}{Cl}
\DeclareMathOperator{\FMset}{\mathcal{F}}
\DeclareMathOperator{\aff}{aff}
\DeclareMathOperator{\proj}{proj}

\newcommand{\sgn}{\text{\sgn}}

\newcommand{\E}{\mathbb{E}}

\renewcommand{\vec}[1]{\mathbf{#1}}

\renewcommand{\ker}{\text{Ker}}

\newcommand{\mb}[1]{\mathbb #1}

\newcommand{\A}{\mathcal{A}}

\makeatletter
\def\@biblabel#1{}
\makeatother

\makeatletter
\patchcmd{\NAT@citex}
{\@citea\NAT@hyper@{%
		\NAT@nmfmt{\NAT@nm}%
		\hyper@natlinkbreak{\NAT@aysep\NAT@spacechar}{\@citeb\@extra@b@citeb}%
		\NAT@date}}
{\@citea\NAT@nmfmt{\NAT@nm}%
	\NAT@aysep\NAT@spacechar\NAT@hyper@{\NAT@date}}{}{}

\patchcmd{\NAT@citex}
{\@citea\NAT@hyper@{%
		\NAT@nmfmt{\NAT@nm}%
		\hyper@natlinkbreak{\NAT@spacechar\NAT@@open\if*#1*\else#1\NAT@spacechar\fi}%
		{\@citeb\@extra@b@citeb}%
		\NAT@date}}
{\@citea\NAT@nmfmt{\NAT@nm}%
	\NAT@spacechar\NAT@@open\if*#1*\else#1\NAT@spacechar\fi\NAT@hyper@{\NAT@date}}
{}{}

\makeatother
\usepackage[mathscr]{euscript}

\begin{document}
	
	\def\spacingset#1{\renewcommand{\baselinestretch}%
		{#1}\small\normalsize} \spacingset{1}

	\begin{flushleft}
		{\Large{\textbf{On the Uniqueness of Fr\'echet Means for Polytope Norms}}}
		\newline
		\\
		Roan Talbut$^{1,\dagger}$, Andrew McCormack$^{2}$, and Anthea Monod$^{1}$
		\\
		\bigskip
		\bf{1} Department of Mathematics, Imperial College London, UK
		\\
		\bf{2} Department of Mathematical and Statistical Sciences, University of Alberta, Edmonton, Canada
		\\
		\bigskip
		$\dagger$ Corresponding e-mail: roan.s.talbut@durham.ac.uk
	\end{flushleft}
	
	
	\section*{Abstract}
	
	Fr\'echet means are a popular type of average for non-Euclidean datasets, defined as those points which minimise the average squared distance to a set of data points. We consider the behaviour of sample Fr\'echet means on normed spaces whose unit ball is a polytope; this setting is rarely covered by existing literature on Fr\'echet means, which focuses on smooth spaces or spaces with bounded curvature. We study the geometry of the set of Fr\'echet means over polytope normed spaces, with a focus on dimension and probabilistic conditions for uniqueness. In particular, we provide a geometric characterisation of the threshold sample size at which Fr\'echet means have a positive probability of being unique, and we prove that this threshold is at most one more than the dimension of our space. We are able to use this geometric characterisation to compute the unique Fr\'echet mean sample threshold in the case of the $\ell_\infty$ and $\ell_1$ norms.

	\paragraph{Keywords: polytope geometry, Fr\'echet means, polytope norms, uniqueness, small samples.}
	
	
	\section{Introduction}
	The Fr\'echet mean set of a collection of data points in a metric space is defined as the set of points that minimise the average squared distance to the data points \citep{frechet1948elements}. Fr\'echet means generalise the classical Euclidean notion of a mean to datasets taking values in general metric spaces. As this definition only uses the metric of the underlying state space, Fr\'echet means have been applied to a wide variety of different geometric settings such as the space of PSD matrices \citep{arsigny2007geometric,fiori2009learning,petersen2019frechet}, the space of persistence diagrams in topological data analysis \citep{mileyko2011probability,cao2022approximating}, and Wasserstein space \citep{panaretos2020invitation,panaretos2020phase}. \cref{def:frechet_mean} gives the formal definition of Fr\'echet means on a general metric space, for both populations and samples.
	
	\begin{definition}\label{def:frechet_mean}
		Let $X$ be a random variable taking values in a general metric space $(\Omega,d)$. Its population Fr\'echet function $f: \Omega \rightarrow \R$ is defined as
		\[
		f(\vec \theta) = \E \left[ d(X,\vec \theta)^2 \right].
		\]
		For a sample $\{\vec x_1, \dots, \vec x_n\} \subset \Omega$, the sample Fr\'echet function is defined as
		\[
		f(\vec \theta) = \frac{1}{n}\sum_{i = 1}^n d(\vec x_i,\vec \theta)^2.
		\]
		The (population/sample) Fr\'echet mean set is then the set of global minima:
		\[
		\FMset = \argmin_{\vec \theta \in \Omega} f(\vec \theta).
		\]
		We often denote these Fr\'echet mean sets by $\FMset(X)$ or $\FMset(\vec x_1, \dots, \vec x_n)$.
	\end{definition}
	
	In this paper, we study the behaviour of sample Fr\'echet means on normed spaces whose unit ball is a polytope, hereafter referred to as spaces with a \emph{polytope norm}. This fundamental setting is not covered by most Fr\'echet mean literature, which instead focuses on manifolds \citep{BhatPatFMI,BhatPatFMII} or spaces with bounded curvature \citep{SturmHadamard,amendola2024invitation}. Despite this, polytope norms are prevalent in statistics; the $\ell_1$ norm is used in LASSO regression \citep{ranstam2018lasso} in order to encourage sparse parameterisation of regression models, and avoid overfitting to data. Our work directly addresses $\ell_1$ Fr\'echet means, as we show with running examples throughout this paper. However, our results are far more general, and apply to Fr\'echet means induced by any polytope norm. We also present examples using the $\ell_\infty$ norm to show the contrast in their behaviour.
	
	Fr\'echet means are defined as the solution set of an optimisation problem, so the question of existence and uniqueness is the natural first topic of study in any geometric setting. The existence of a Fr\'echet mean is equivalent to a distribution having finite moments, while the uniqueness of Fr\'echet means is more heavily dependent on the underlying geometry. \cite{SturmHadamard} showed that Fr\'echet means are unique in spaces with non-positive Alexandrov curvature, such as the cone of SPD matrices \citep{arsigny2007geometric}, and the BHV space of phylogenetic trees \citep{billera2001geometry,nye2017principal}. For Riemannian manifolds, \cite{BhatPatFMI} give sufficient conditions for uniqueness of sample and population Fr\'echet means, and more recently it was shown that for any absolutely continuous distribution over a complete Riemannian manifold, sample Fr\'echet means will almost surely be unique \citep{arnaudon2014meansUniqueness}. 
	
	In this paper, we present similarly foundational results on the uniqueness of Fr\'echet means from polytope norms. For data with an absolutely continuous underlying distribution, we show that the population Fr\'echet mean will be unique, while the probability of a unique sample Fr\'echet mean will converge to one. We then consider the probability of a unique Fr\'echet mean for small samples; we show that there is some sample threshold below which Fr\'echet means are almost never unique, and above which Fr\'echet means have a positive probability of uniqueness. This result follows from characterising the geometric configurations which can occur with positive probability; \cite{durier1985geometricalFermateber} take a similar view in their study of Fermat--Weber points, though their work is deterministic where ours is probabilistic. We show that this sample threshold is at most $k+1$ for general polytope norms, where $k$ is the dimension of our space. In the cases of the $\ell_\infty$ and $\ell_1$ norms, this threshold is $k$ and 3 respectively. 
	
	We provide a brief introduction to the relevant polytope geometry in \cref{sec:polytope_geometry} and the geometric foundations of Fr\'echet means on norm spaces in \cref{sec:Normed_Space_FMs}. We will show that for any absolutely continuous distribution, the population Fr\'echet mean will be unique, while the probability of a unique sample Fr\'echet mean is zero for small sample sizes and converging to one as the sample size increases (\cref{sec:large_sample_uniqueness}). Our main result, \cref{thm:wwp_dim_condition}, gives geometric conditions which dictate the number of samples required for this probability of uniqueness to go from zero to positive (\cref{sec:FM_uniqueness}). In \cref{sec:calculating_examples}, we present the threshold values for specific norms. We conclude by showing how the Frank--Wolfe algorithm \citep{frank1956algorithm} can be used for the exact computation of Fr\'echet means in \cref{sec:computation}.
	
	\paragraph{Notation.} We use $\relint(U), \closure(U)$ and $\aff(U)$ to denote the relative interior, closure and affine span of a set $U \subseteq \mb{R}^k$ respectively. We use $[k]$ to denote the set $\{1,\ldots,k\}$. We use $\vec e_i$ to denote the standard $i^{th}$ basis vecotr. Throughout this paper $f$ will refer to the sample Fr\'echet function unless otherwise specified.
	
	\section{Polytope Geometry} \label{sec:polytope_geometry}
	
	In this section, we review the definitions of polytopes, their faces and their polars. For a more detailed introduction, we refer the reader to \cite{ziegler2012lectures}. We then formalise a polytope norm --- a norm whose unit ball is a polytope --- and review the convex geometry corresponding to such a norm.
	
	\subsection{Polytopes}
	
	Polytopes can be viewed in two ways --- as a convex hull of finitely many points, or an intersection of finitely many halfspaces. 
	
	\begin{definition}[Polytope]
		A polytope $Q$ in $\R^k$ is the convex hull of a finite set of points $\{ \vec v_1, \dots, \vec v_r \} \subset \mathbb{R}^k$:
		\[
		Q = \conv(\{ \vec v_1, \dots, \vec v_r \}) = \left\{ \sum_{i=1}^r \lambda_i \vec v_i: \lambda_1, \dots, \lambda_r \geq 0, 
		\; \sum_{i=1}^r \lambda_i = 1 \right\}.
		\]
		Equivalently, given a matrix  $A \in \R^{r \times k}$ and a vector $\vec b \in \R^r$, a polytope can also be defined as the set of all solutions to the following linear inequalities:
		\[
		Q = P(A, \vec b) \coloneqq \left \{ \vec x \in \R^k \; |
		\; A \vec x \leq \vec b, \text{ coordinatewise} \right\},
		\]
		where it is assumed that the set of solutions is a bounded set. The convex hull and inequality representations of $Q$ are referred to as the vertex and halfspace representations respectively.
	\end{definition}
	
	Throughout this paper, we use two classes of polytopes as running examples; these are hypercubes and cross-polytopes, which arise as the unit balls of the $\ell_{\infty}$ and $\ell_1$ norms respectively. The vertex and halfspace representation of each of these balls is given in \Cref{ex:hypercube_representation,ex:cross-polytope_representation}, while \Cref{fig:polytope_pictures} shows visualisations of these polytopes for $k=2$ and 3.
	
	\begin{example}[Hypercubes]\label{ex:hypercube_representation}
		We define the $k$-dimensional hypercube $B_{\infty}^k$ by
		\[
		B_{\infty}^k \coloneqq \conv \left ( \{ -1, 1 \}^k\right) = \{ \vec x \in \R^k : \forall i \in [k], |x_i| \leq 1 \}.
		\]
	\end{example}
	\begin{example}[Cross-polytopes]\label{ex:cross-polytope_representation}
		We define the $k$-dimensional cross-polytope $B_1^k$ by 
		\[
		B_1^k \coloneqq \conv\left( \{ \pm \vec e_1, \dots, \pm \vec e_k \} \right) = \{ \vec x \in \R^k: \textstyle \sum_{i=1}^k |x_i| \leq 1\}.
		\]
	\end{example}
	
	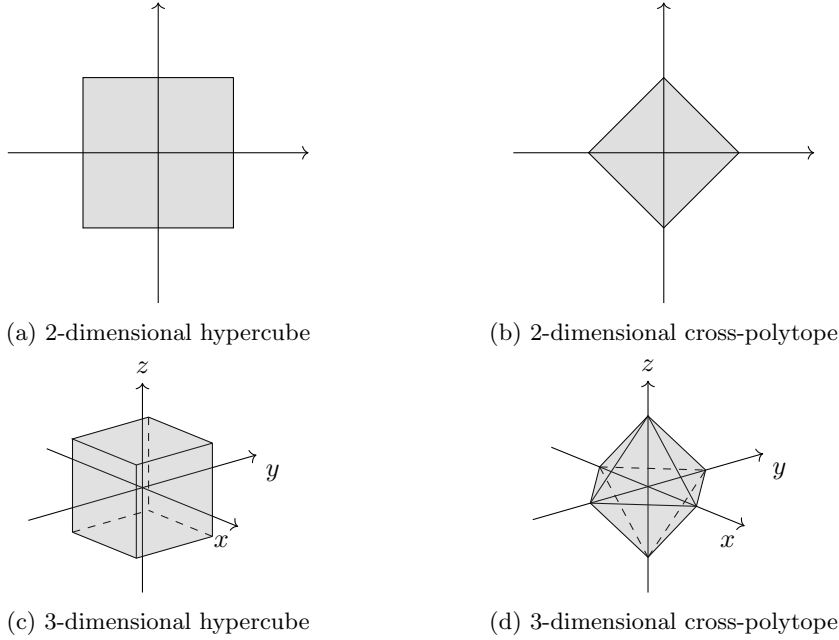
\begin{figure*}[t!]
		\centering
		\begin{subfigure}[b]{0.4\textwidth}
			\centering
			\begin{tikzpicture}
				\filldraw[fill=gray,fill opacity=1/4] (1,1) -- (-1,1) -- (-1,-1) -- (1,-1) -- (1,1);
				\draw[->] (-2,0) -- (2,0);
				\draw[->] (0,-2) -- (0,2);
			\end{tikzpicture}
			\caption{2-dimensional hypercube}
		\end{subfigure}
		\begin{subfigure}[b]{0.4\textwidth}
			\centering
			\begin{tikzpicture}
				\filldraw[fill=gray,fill opacity=1/4] (1,0) -- (0,1) -- (-1,0) -- (0,-1) -- (1,0);
				\draw[->] (-2,0) -- (2,0);
				\draw[->] (0,-2) -- (0,2);
			\end{tikzpicture}
			\caption{2-dimensional cross-polytope}
		\end{subfigure}
		\begin{subfigure}[b]{0.4\textwidth}
			\centering
			\tdplotsetmaincoords{70}{50}
			\begin{tikzpicture}[scale=0.66, tdplot_main_coords]
				\coordinate (O) at (0,0,0);
				
				\draw[->] (-3,0,0) -- (3,0,0) node[anchor=north east]{$x$};
				\draw[->] (0,-3,0) -- (0,3,0) node[anchor=north west]{$y$};
				\draw[->] (0,0,-2.25) -- (0,0,2.25) node[anchor=south]{$z$};
				
				\coordinate (A) at (1,1,1);
				\coordinate (B) at (1,-1,1);
				\coordinate (C) at (-1,-1,1);
				\coordinate (D) at (-1,1,1);
				\coordinate (E) at (1,1,-1);
				\coordinate (F) at (1,-1,-1);
				\coordinate (G) at (-1,-1,-1);
				\coordinate (H) at (-1,1,-1);
				
				\draw (A) -- (B) -- (C) -- (D) -- (A);
				\draw (E) -- (F) -- (G);
				\draw[dashed] (G) -- (H);
				\draw[dashed] (E) -- (H);
				\draw (A) -- (E);
				\draw (B) -- (F);
				\draw (C) -- (G);
				\draw[dashed] (D) -- (H);
				\fill[fill=gray,opacity=1/8](A) -- (B) -- (F) -- (E) -- cycle;
				\fill[fill=gray,opacity=1/8](C) -- (D) -- (H) -- (G) -- cycle;
				\fill[fill=gray,opacity=1/8](A) -- (D) -- (H) -- (E) -- cycle;
				\fill[fill=gray,opacity=1/8](C) -- (B) -- (F) -- (G) -- cycle;
				\fill[fill=gray,opacity=1/8](A) -- (B) -- (C) -- (D) -- cycle;
				\fill[fill=gray,opacity=1/8](E) -- (F) -- (G) -- (H) -- cycle;
			\end{tikzpicture}
			\caption{3-dimensional hypercube}
		\end{subfigure}
		\begin{subfigure}[b]{0.4\textwidth}
			\centering
			\tdplotsetmaincoords{70}{50}
			\begin{tikzpicture}[tdplot_main_coords]
				\coordinate (O) at (0,0,0);
				
				\draw[->] (-2,0,0) -- (2,0,0) node[anchor=north east]{$x$};
				\draw[->] (0,-2,0) -- (0,2,0) node[anchor=north west]{$y$};
				\draw[->] (0,0,-1.5) -- (0,0,1.5) node[anchor=south]{$z$};
				
				\coordinate (A) at (1,0,0);
				\coordinate (B) at (0,1,0);
				\coordinate (C) at (0,0,1);
				\coordinate (D) at (-1,0,0);
				\coordinate (E) at (0,-1,0);
				\coordinate (F) at (0,0,-1);
				
				\draw (D) -- (E) -- (A) -- (B);
				\draw[dashed] (D) -- (B);
				\draw (D) -- (C) -- (A) -- (F);
				\draw[dashed] (D) -- (F);
				\draw (F) -- (E) -- (C) -- (B);
				\draw[dashed] (F) -- (B);
				\fill[fill=gray,opacity=1/8](A) -- (B) -- (C) -- cycle;
				\fill[fill=gray,opacity=1/8](D) -- (E) -- (F) -- cycle;
				\fill[fill=gray,opacity=1/8](A) -- (B) -- (F) -- cycle;
				\fill[fill=gray,opacity=1/8](C) -- (D) -- (E) -- cycle;
				\fill[fill=gray,opacity=1/8](A) -- (E) -- (C) -- cycle;
				\fill[fill=gray,opacity=1/8](D) -- (B) -- (F) -- cycle;
				\fill[fill=gray,opacity=1/8](A) -- (E) -- (F) -- cycle;
				\fill[fill=gray,opacity=1/8](D) -- (B) -- (C) -- cycle;
			\end{tikzpicture}
			\caption{3-dimensional cross-polytope}
		\end{subfigure}		
		\caption{The hypercubes and cross-polytopes in dimension 2 and 3.}
		\label{fig:polytope_pictures}
	\end{figure*}
	
	An important attribute of the geometry of a polytope is its \emph{facial} structure.  
	
	\begin{definition}[Face, face lattice]
		Let $\vec c \in \R^k$, $c_0 \in \R$ be such that $\vec c ^{\T} \vec x \leq c_0$ for all $\vec x$ in the polytope $Q$. A face $F$ of $Q$ is an intersection of the form:
		\[
		F= Q \bigcap \{ \vec x \in \R^k : \, \vec c^{\T} \vec x = c_0 \}.
		\]
		The set of all faces is denoted $\mathfrak{F}(Q)$, and forms a lattice with the partial ordering given by inclusion.
	\end{definition}
	
	We note that a polytope $Q$ is a face of itself, as is the empty set; any other face of $Q$ is referred to as a \emph{proper} face of $Q$. The $0$-dimensional faces of a polytope are called vertices, the $1$-dimensional faces of a polytope are edges, and the maximal proper faces of $Q$ are \emph{facets}. When $Q$ is given by $\conv \{ \vec v_1, \dots, \vec v_r \}$, every face of $Q$ will be given by the convex hull of some subset of $\{ \vec v_1, \dots, \vec v_r \}$. When $Q$ is given by solutions to a collection of linear inequalities, every face of $Q$ is the solution set of the same inequalities, with some taken to be equalities.
	
	Lastly, we introduce the polar of a polytope which is intimately tied to subgradients of polytope norms.
	
	\begin{definition}[Polar Polytope]
		Let $Q = \conv \{ \vec v_1, \dots, \vec v_r \} \subset \R^k$ be a full-dimensional polytope containing the origin in its interior. The polar of $Q$ is the polytope given by:
		\[
		Q^{\Delta} \coloneqq \{ \vec x \in \R^k : \forall \vec y \in Q , \, \vec y^{\T} \vec x \leq 1 \} = P(V,\mathbf 1),
		\]
		where $V$ is the matrix with rows $\vec v_1, \dots, \vec v_r$. The faces of $Q^{\Delta}$ are given by $\{ F^{\Diamond}: \, F \in \mathfrak{F}(Q) \}$, where
		\[
		F^{\Diamond} \coloneqq \{ \vec x \in Q^{\Delta} : \forall \vec y \in F, \; \vec y^{\T} \vec x = 1 \}.
		\]
		We call $F^{\Diamond}$ the polar of the face $F$.
	\end{definition}
	
	The faces of a polytope are in bijection with the faces of its polar polytope, in fact their face lattices are anti-isomorphic; $F_1^{\Diamond} \subseteq F_2^{\Diamond}$ if and only if $F_2 \subseteq F_1$. The polar of a polar polytope $Q^{\Delta \Delta}$ is equal to $Q$, and as such, we see that the polar of $P(A, \vec 1)$ is given by the convex hull of the row vectors of $A$. 
	
	\begin{example}[Hypercubes and Cross-polytopes]
		The $k$-dimensional hypercube and cross-polytope are polars of each other. The polars of the hypercube vertices are the facets of the cross-polytope, and vice versa.
	\end{example}
	
	\subsection{Polytope Norms}
	
	We now review the convex analysis of a norm whose unit ball is a polytope $B$. When defining polytope norms, the hyperplane representation is most natural.
	
	\begin{definition}[Polytope Normed Space]
		Let $B = P(A, \vec 1) \subset \R^k$ be a full-dimensional, centrally symmetric polytope, meaning that $\vec x \in B$ if and only if $- \vec x \in B$. Then $(\R^k, \norm{B}{\cdot})$ is a polytope normed space where the $B$-norm is defined by
		\begin{align}
			\label{eqn:BnormDefinition}
			\norm{B}{\vec x} \coloneqq \inf \{ d \in \R_{>0}: \,  d^{-1}\vec x \in B \} \equiv \max_{i \in [r]} (A \vec x)_i.
		\end{align}
		The metric induced by the $B$-norm is denoted by $d_B$. 
	\end{definition}
	
	It is straightforward to show that $\norm{B}{\vec x}$ is a norm, and that the unit ball of $\norm{B}{\cdot}$ is the polytope $B$. In the case of our standard polytope examples:
	
	\begin{itemize}
		\item The hypercube $B_{\infty}^k$ gives rise to the $\ell_{\infty}$ norm $\norm{B_{\infty}^k}{\vec x} \equiv \|\vec x \|_{\infty} = \max_{i \in [k]} \left \{ |x_i| \right \}$.
		\item The cross-polytope $B_{1}^k$ gives rise to the $\ell_{1}$ norm $\norm{B_{1}^k}{\vec x} \equiv \|\vec x \|_{1} = \sum_{i \in [k]} |x_i|$.
	\end{itemize}
	
	To discuss the local behaviour of a polytope norm near $\vec x$, we need to know which rows of $A$ are active, meaning that they attain the componentwise maximum of $A\vec x$. We refer to these rows as the \emph{active constraints}. In the following definition, we write $\vec a \in A$ to mean $\vec a$ is a row vector of $A$.
	
	\begin{definition}
		Let $F$ be a proper face of $B$. The active constraints on $F$, denoted by $\A(F)$, is the set:
		\[
		\A(F) = \{ \vec a \in A : \text{ for all } \vec x \in F, \,  \vec a^{\T} \vec x=1 \}. 
		\]
		Similarly, for any $\vec x \in \R^k$ we define $\A(\vec x)$ to be the active constraints of $\| \vec x \|_B$. That is:
		\[
		\A( \vec x ) = \{ \vec a \in A :  \vec a^{\T} \vec x = \| \vec x \|_B \}.
		\]
	\end{definition}
	Notice that $\conv \A(F) = F^{\Diamond}$. We also note that the set of open cones generated by proper faces of $B$, $\relint (\R_+ F)$, partition $\R^k \setminus \{ \vec 0 \}$ according to the active constraints; $\vec x \in \relint (\R_+ F)$ if and only if $\A(\vec x) = \A(F)$.  This partition of the space is the \emph{face fan} of $B$.
	
	Finally, we review the subgradient and directional derivatives of the $B$-norm at $\vec x$. 
	
	\begin{definition}[Subgradient, Directional Derivative]
		The subgradient of a convex function $g:\R^k \rightarrow \R$ at $\vec x$ is given by:
		\[
		\partial g(\vec x) \coloneqq \{ \vec v : \, g(\vec z) \geq g(\vec x) + \vec v^{\T}(\vec z-\vec x) \; \text{ for all } \vec z \in \R^k \}.
		\]
		The directional derivative in direction $\vec t \in \R^k$, denoted by $\partial_{\vec t} g(\vec x)$, is 
		\[
		\partial_{\vec t} g(\vec x) \coloneqq \lim_{h \rightarrow 0^+} h^{-1} \left( g(\vec x + h \vec t) - g(\vec x) \right) = \sup_{\vec v \in \partial g(\vec x)} \vec v^{\T} \vec t.
		\]
		This equality is from Theorem 23.4 of \citep{rockafellar1970convex}.
	\end{definition}

	\begin{remark}
		\label{rmk:subgradient}
		The subgradient of a maximum of differentiable functions $g(\vec x) = \max \{ g_1(\vec x), \dots, g_n(\vec x)\}$ is known to be the convex hull of the gradients of active functions at $\vec x$: $\partial g(\vec x) = \conv \{ \nabla_{\vec x} g_i(\vec x):g_i(\vec x) = g(\vec x)\}$. Applying this result to a polytope $B$-norm, which is a maximum of linear functions by \eqref{eqn:BnormDefinition}, we obtain that
		\begin{align*}
			\partial \Vert \vec x \Vert_B^2 =  \conv \left \{ \nabla_{\vec x} (\vec a^{\top}\vec x)^2: \vec a \in F^{\Diamond} \right\} = 2\| \vec x \| F^{\Diamond}, 
		\end{align*}
		when $\vec x \in \relint (\R_+ F)$. Consequently, the subgradient of the sample Fr\'echet function will be given by
		$\partial f(\vec x) =   2\sum_{i \in [n]} \| \vec x_i - \theta \| F_i^{\Diamond}$, where $F_i$ is the unique face such that $\vec x_i - \theta \in \relint (\R_+ F_i)$, and the summation is a Minkowski sum. Since a convex function is minimized if and only if its subgradient contains $\vec 0$, it follows that $\theta$ is a sample Fr\'echet mean if and only if $\vec 0 \in  \sum_{i \in [n]} \| \vec x_i - \theta \| F_i^{\Diamond}$.  
	\end{remark}
	
	\section{Fr\'echet Means on Normed Spaces} \label{sec:Normed_Space_FMs}
	
	In this section, we discuss Fr\'echet means and outline their fundamental properties in the case of (polytope) normed spaces. All Fr\'echet means in the sections to follow are taken with respect to a given $B$-norm, unless stated otherwise. As defined in \cref{def:frechet_mean}, we use $f$ to denote a Fr\'echet function; this is given by $f(\theta) = \E [d(X,\theta)^2]$ for general random variables and $f(\theta)=n^{-1} \sum_{i=1}^n d(\vec x_i,\theta)^2$ for a sample $\{ \vec x_1, \dots, \vec x_n \}$. We then denote the Fr\'echet mean set by $\FMset \coloneqq \argmin_{\theta} f(\theta)$, which we may abbreviate to $\FMset (X)$ or $\FMset(\vec x_1, \dots, \vec x_n)$.
	
	Understanding the convexity of the Fr\'echet function is crucial for understanding the existence and convexity of Fr\'echet mean sets; these foundational convexity results have been established in many settings, such as Wasserstein spaces \citep{panaretos2020frechet}, Riemannian spaces \citep{afsari2011riemannian}, metric trees, \citep{romon2023convex}, and the tropical projective torus \citep{lin2025tropical}. Below, we state the convexity results we require in the case of a general normed space. These results below are generalisations of the convexity results established in the tropical setting by \cite{lin2025tropical}, and mirror the work of \citep{durier1985geometricalFermateber} on Fermat--Weber points.
	
	\begin{lemma}
		\label{lemma:NormConvexity}
		Fix a normed space $(\R^k, \norm{}{\cdot})$ with induced metric $d$. For all $\vec x$, $d(\cdot, \vec x)^2$ is convex, and in fact if, for some $t \in (0,1)$ we have $$d((1-t)\vec\theta_1 + t \vec\theta_2, \vec x)^2  = (1-t)d(\vec\theta_1, \vec x)^2+ t d(\vec\theta_2, \vec x)^2,$$ then $d(\vec\theta_1, \vec x) = d(\vec\theta_2, \vec x)$. The Fr\'echet function is also convex, and so Fr\'echet mean sets are convex.
	\end{lemma}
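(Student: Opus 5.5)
The plan is to write $d(\vec\theta,\vec x)^2 = g(\vec\theta)^2$, where $g(\vec\theta) = \norm{}{\vec\theta - \vec x}$, and combine two elementary convexity facts. The first is that $g$ is convex. The second is that the map $s \mapsto s^2$ is convex and nondecreasing on $[0,\infty)$, and strictly convex there.

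Fix $\vec\theta_1,\vec\theta_2$ and $t\in(0,1)$, and set $a = d(\vec\theta_1,\vec x)$, $b = d(\vec\theta_2,\vec x)$. Since $(1-t)\vec\theta_1 + t\vec\theta_2 - \vec x = (1-t)(\vec\theta_1-\vec x) + t(\vec\theta_2-\vec x)$, the triangle inequality and homogeneity of the norm give
\[
d((1-t)\vec\theta_1 + t\vec\theta_2,\vec x) \le (1-t)a + tb .
\]
Because both sides are nonnegative, squaring preserves the inequality:
\[
d((1-t)\vec\theta_1 + t\vec\theta_2,\vec x)^2 \le \bigl((1-t)a+tb\bigr)^2 .
\]
Convexity of $s\mapsto s^2$ then gives
\[
\bigl((1-t)a+tb\bigr)^2 \le (1-t)a^2 + tb^2 .
\]
This chain proves that $d(\cdot,\vec x)^2$ is convex.

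For the equality statement, suppose $d((1-t)\vec\theta_1 + t\vec\theta_2,\vec x)^2 = (1-t)a^2+tb^2$. Then every inequality in the chain must be an equality, and in particular $((1-t)a+tb)^2 = (1-t)a^2+tb^2$. The step to check is the identity
\[
(1-t)a^2+tb^2-\bigl((1-t)a+tb\bigr)^2 = t(1-t)(a-b)^2 .
\]
This expresses strict convexity of the square. Since $t(1-t)>0$, it forces $a=b$, which is the claim $d(\vec\theta_1,\vec x)=d(\vec\theta_2,\vec x)$. Nothing here needs equality in the triangle inequality, so no further geometric argument is required.

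Finally, the sample Fréchet function $f(\vec\theta)=n^{-1}\sum_i d(\vec x_i,\vec\theta)^2$ is a nonnegative combination of convex functions, so it is convex. In the population case $f(\vec\theta)=\E[d(X,\vec\theta)^2]$, apply the pointwise inequality to each realisation of $X$ and take expectations; monotonicity and linearity of expectation preserve it wherever $f$ is finite. The Fréchet mean set is the set of minimisers of this convex function. That set is a sublevel set $\{f\le \min f\}$ of a convex function, and sublevel sets of convex functions are convex, so $\FMset$ is convex.

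I do not expect any step to be a real obstacle. The only points needing care are that the equality case should be extracted from strict convexity of $s\mapsto s^2$ (the triangle inequality alone will not give it), and that the population case requires $f$ to be finite, i.e. finite second moments.
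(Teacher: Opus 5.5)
Your proof is correct and takes essentially the same route as the paper: the triangle inequality with homogeneity, then monotonicity and convexity of $y\mapsto y^2$ on $[0,\infty)$, with the equality case coming from strict convexity of the square (your identity $t(1-t)(a-b)^2$ makes this explicit), and convexity of $f$ and of $\FMset$ inherited through linearity and monotonicity of sums and expectations. You also rightly avoid the paper's passing claim that $d(\cdot,\vec x)^2$ is strictly convex in $\vec\theta$, which fails for polytope norms (it is constant along any segment on which the norm is constant).
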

	\begin{proof}
		Convexity of a squared norm is immediate from the convexity of norms, as the function $y \mapsto y^2$ is strictly monotone increasing and strictly convex over the domain $\R_{\geq 0}$:
		\begin{align*}
			\Vert (1-t)\vec\theta_1 + t \vec\theta_2 - \vec x\Vert^2 &\leq \left( (1-t)\Vert \vec\theta_1 - \vec x \Vert + t \Vert \vec\theta_2 - \vec x\Vert \right)^2, \\
			&\leq (1-t)\Vert \vec \theta_1 - \vec x \Vert^2 + t \Vert \vec \theta_2 - \vec x\Vert^2.
		\end{align*}
		Hence $\Vert \theta - x\Vert^2 = d(\theta,\vec x)^2$ is strictly convex in $\theta$. \\
		The second statement follows from the strict convexity of $y \mapsto y^2$; we have equality if and only if $\Vert \theta_1 - \vec x \Vert^2 = \Vert \theta_2 - \vec x\Vert^2$. The convexity of the Fr\'echet function $f$ is a direct result of expectation preserving convexity, while the convexity of Fr\'echet mean sets follows from the convexity of $f$.
	\end{proof}
	
	Building on \Cref{lemma:NormConvexity}, we will see that Fr\'echet mean sets are given by intersections of spheres. \Cref{lemma:FM_dists} expresses this in the case of a general population $X \sim \mb P$, while \Cref{cor:FMisPolytope} considers the case of a finite sample.
	
	\begin{lemma}
		\label{lemma:FM_dists}
		Let $X \sim \mathbb P$ be a random variable in $(\R^k, \| \cdot \|)$. Assume that $\theta_1$ and $\theta_2$ are Fr\'echet means of $X$. Then for every $\vec x$ in the  $\esssupp(\mathbb P)$, we have $d(\vec x, \theta_1) = d(\vec x, \theta_2)$. 
	\end{lemma}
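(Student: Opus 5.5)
The plan is to compare $\theta_1$ and $\theta_2$ with their midpoint, using the convexity statement of \Cref{lemma:NormConvexity}. Set $\bar\theta = \tfrac12(\theta_1+\theta_2)$ and $g(\vec x) = \tfrac12 d(\theta_1,\vec x)^2 + \tfrac12 d(\theta_2,\vec x)^2 - d(\bar\theta,\vec x)^2$. By \Cref{lemma:NormConvexity} (with $t=\tfrac12$) we have $g(\vec x)\ge 0$ for every $\vec x$. Since $\theta_1,\theta_2$ are Fr\'echet means, $f(\theta_1)=f(\theta_2)=\min f$. By convexity of $f$ we also have $f(\bar\theta)\le \min f$, so $f(\bar\theta)=\min f$. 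We may assume $f$ is finite, as is implicit in the existence of Fr\'echet means. Taking expectations then gives $\E[g(X)] = \tfrac12 f(\theta_1)+\tfrac12 f(\theta_2) - f(\bar\theta) = 0$. A nonnegative function with zero expectation vanishes $\mathbb P$-almost surely, so the set $N=\{\vec x: g(\vec x)>0\}$ is $\mathbb P$-null.

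Next I strengthen the inequality in \Cref{lemma:NormConvexity} to a quantitative one, so that $g(\vec x)=0$ forces equal distances. Write $a=d(\theta_1,\vec x)$, $b=d(\theta_2,\vec x)$. The triangle inequality gives $d(\bar\theta,\vec x)\le \tfrac12(a+b)$, hence
\[
g(\vec x)\ \ge\ \tfrac12 a^2+\tfrac12 b^2-\tfrac14(a+b)^2 = \tfrac14(a-b)^2 .
\]
So $g(\vec x)=0$ implies $a=b$. It follows that the set $\{\vec x : d(\vec x,\theta_1)\ne d(\vec x,\theta_2)\}$ is contained in $N$ and is therefore $\mathbb P$-null.

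Finally I pass from "almost every $\vec x$" to "every $\vec x$ in $\esssupp(\mathbb P)$". The map $h(\vec x)=d(\vec x,\theta_1)-d(\vec x,\theta_2)$ is continuous, so $U=\{h\ne 0\}$ is open. Suppose some $\vec x\in\esssupp(\mathbb P)$ had $h(\vec x)\neq0$. Then $U$ would be an open neighbourhood of a point of the essential support, so $\mathbb P(U)>0$ by definition of the support. This contradicts the previous step, and the lemma follows.

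The main obstacle is the middle step, the quantitative gap bound. \Cref{lemma:NormConvexity} only states equality in the exact case, and we need a bound that holds pointwise so it survives integration; the computation above supplies it. A secondary point is making sure that $f(\bar\theta)$ is finite and that the expectations may be split, which holds because $f(\theta_1)<\infty$ and $d(\bar\theta,\vec x)^2\le 2d(\theta_1,\vec x)^2+2d(\theta_1,\bar\theta)^2$.
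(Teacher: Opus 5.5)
Your proof is correct and takes essentially the same route as the paper: compare with the midpoint, use strict convexity of $y\mapsto y^2$ to force $d(\vec x,\theta_1)=d(\vec x,\theta_2)$ wherever the convexity inequality is tight, and finish with continuity plus the fact that every neighbourhood of a support point has positive mass. The only difference is presentation: you show equality holds $\mathbb P$-a.s. and then use that $\{h\neq 0\}$ is open, whereas the paper argues by contradiction on each ball $B_{1/m}(\vec x)$ and passes to the limit. Your bound $g\ge\tfrac14(a-b)^2$ is valid but not needed, because $g(\vec x)=0$ is exactly the equality case of \Cref{lemma:NormConvexity} at $t=\tfrac12$.
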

	\begin{proof} 
		Fix some $\vec x \in \esssupp(\mb P)$. By definition, for all $m \in \N$ we have $\mb P(B_{1/m}(\vec x)) > 0$. We will show that there is some $\vec x_m \in B_{1/m}(\vec x)$ such that $d(\vec x_m,\theta_1) = d(\vec x_m, \theta_2)$ by contradiction. Let $\theta_3 = \tfrac{1}{2}(\theta_1 + \theta_2)$ be a midpoint of $\theta_1, \theta_2$. By the convexity of $d(X,\cdot)$ and strict convexity of $y \mapsto y^2$:
		\begin{align*}
			d(X,\theta_3)^2 \leq \big(\tfrac{1}{2}d(X,\theta_1) + \tfrac{1}{2}d(X,\theta_2)\big)^2 \leq \tfrac{1}{2}\big( d(X,\theta_1)^2 + d(X,\theta_2)^2\big).
		\end{align*}
		We assume for all $\vec u \in B_{1/m}(\vec x)$, $d(\vec u,\theta_1) \neq d(\vec u, \theta_2)$, so the above inequality is strict. It follows that
		\begin{align*}
			\E \left[I_{X \in B_{1/m}}d(X,\theta_3)^2\right] &< \tfrac{1}{2}\left(\E\left[I_{X \in B_{1/m}}d(X,\theta_1)^2\right] + \E \left[I_{X \in B_{1/m}}d(X,\theta_2)^2\right]\right).
		\end{align*}
		Hence
		\[
		\E \left[d(X,\theta_3)^2\right] < \tfrac{1}{2}\left(\E\left[d(X,\theta_1)^2\right] + \E \left[d(X,\theta_2)^2\right]\right),
		\]
		which contradicts the optimality of $\theta_1$ and $\theta_2$. Therefore for all $m \in \N$, we can pick some $\vec x_m \in B_{1/m}(\vec x)$ such that $d(\vec x_m,\theta_1) = d(\vec x_m,\theta_2)$. By continuity of the distance function we conclude $d(\vec x,\theta_1) = d(\vec x,\theta_2)$.
	\end{proof}
	
	As a natural corollary to \Cref{lemma:FM_dists}, we see that any distribution whose essential support is $\R^k$ must have a unique Fr\'echet mean (\cref{thm:unique_population_FM}). However, for finite samples, uniqueness is not guaranteed. Fr\'echet means are given by intersections of spheres centred at each data point, as shown in \Cref{cor:FMisPolytope}. In fact, in the case of polytope norms, Fr\'echet means are intersections of faces. \Cref{fig:FM_sets} provides a visualisation of this behaviour for Fr\'echet mean sets of a small number of samples. 
	
	\begin{corollary}
		\label{cor:FMisPolytope}
		Fix a normed space $(\R^k, \| \cdot \|_B)$ with unit ball $B$. There exist unique $d_i \in \R_{\geq 0}$ such that $$\FMset (\vec x_1,\ldots,\vec x_n) = \bigcap_{i \in [n]}\{d_iB + \vec x_i\} = \bigcap_{i \in [n]}\{d_i\partial B + \vec x_i\},
		$$ where $\partial B$ is the boundary of $B$. In particular, if $B$ is a polytope then $\FMset(\vec x_1, \dots, \vec x_n)$ is a polytope. Moreover, for every $i \in [n]$ there exists a proper face $F_i$ of $B$ such that $\FMset$ is given by $\bigcap_{i \in [n]}\{d_iF_i + \vec x_i\}$. 
	\end{corollary}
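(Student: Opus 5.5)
Apply \Cref{lemma:FM_dists} to the empirical distribution $\mathbb P_n = n^{-1}\sum_i \delta_{\vec x_i}$. Its essential support is $\{\vec x_1,\dots,\vec x_n\}$, and the population Fr\'echet function of $\mathbb P_n$ is the sample Fr\'echet function. Sample Fr\'echet means exist: $f$ is continuous and coercive, since $f(\theta)\ge n^{-1}(\|\theta\|_B-\max_i\|\vec x_i\|_B)^2$. Fix one mean $\theta_0 \in \FMset$ and set $d_i \coloneqq d_B(\vec x_i,\theta_0)$.

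First I show $\FMset = \bigcap_i \{d_i\partial B + \vec x_i\}$. If $\theta \in \FMset$, then \Cref{lemma:FM_dists} gives $d_B(\vec x_i,\theta)=d_i$ for every $i$, so $\theta - \vec x_i \in d_i\partial B$. Conversely, if $\|\theta-\vec x_i\|_B = d_i$ for all $i$, then $f(\theta)=f(\theta_0)$ is the minimum, so $\theta\in\FMset$. The same lemma shows that every choice of $\theta_0$ gives the same $d_i$.

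Uniqueness of the $d_i$ needs care when the intersection could be realised with other radii. I would interpret the statement as saying that $d_i$ is the common distance from $\vec x_i$ to all of $\FMset$. Then uniqueness is immediate, because $\FMset\neq\emptyset$ and any representation with radii $d_i'$ forces $d_i' = \|\theta_0-\vec x_i\|_B = d_i$. I would remark that this uses the equality with the sphere intersection. For the ball version, I would show $\bigcap_i\{d_iB+\vec x_i\}\subseteq\FMset$: any $\theta$ in it has $\|\theta-\vec x_i\|_B\le d_i$ for all $i$, hence $f(\theta)\le f(\theta_0)$, hence equality, hence $\theta\in\FMset$ and the sphere condition holds. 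The reverse inclusion is trivial.

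When $B$ is a polytope, the ball form writes $\FMset$ as a finite intersection of translated, dilated polytopes. This is a bounded solution set of finitely many linear inequalities, so it is a polytope.

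For the face statement, the main point is that a convex subset of a sphere of a polytope norm lies in a single face. Fix $i$ with $d_i>0$. The set $C_i \coloneqq (\FMset-\vec x_i)/d_i$ is convex (by \Cref{lemma:NormConvexity}) and contained in $\partial B$. Choose $\vec y \in \relint C_i$ and let $F_i$ be the unique face with $\vec y\in\relint F_i$. Take any $\vec a\in\A(F_i)$ and any $\vec z\in C_i$. Since $\vec y \in \relint C_i$, the point $\vec y$ is a strict convex combination of $\vec z$ and some $\vec z'\in C_i$. Then $1=\vec a^{\T}\vec y$ is a strict convex combination of $\vec a^{\T}\vec z\le1$ and $\vec a^{\T}\vec z'\le1$, so $\vec a^{\T}\vec z=1$. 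Hence $\vec z\in F_i$, and so $C_i\subseteq F_i$. (Equivalently, $C_i$ lies in the face of $B$ exposed by the supporting hyperplane at $\vec y$.)

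If $d_i=0$, then $\FMset=\{\vec x_i\}$ and any proper face works, since $d_iF_i+\vec x_i = \{\vec x_i\}$. This needs $F_i\neq\emptyset$, so take a vertex. Finally, $\FMset\subseteq\bigcap_i\{d_iF_i+\vec x_i\}\subseteq\bigcap_i\{d_iB+\vec x_i\}=\FMset$, which gives equality. The obstacle I expect is only this face-containment step together with the degenerate $d_i=0$ case; the rest follows directly from \Cref{lemma:FM_dists}.
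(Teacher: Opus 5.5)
Your proof is correct. Its first half is the paper's argument: fixing the $d_i$ via \Cref{lemma:FM_dists}, the chain of inclusions between the sphere intersection, the ball intersection and $\FMset$, and the polytope conclusion. You add care about existence of a mean and the case $d_i=0$, which the paper covers only implicitly.

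Your remark that uniqueness of the $d_i$ rests on the \emph{sphere} representation is well founded, since the ball representation alone does not fix the radii. For example, in $\R^1$ with data $0,2,1$, the mean set $\{1\}$ equals both $[-1,1]\cap[1,3]\cap\{1\}$ and $[-1,1]\cap[1,3]\cap[\tfrac12,\tfrac32]$.

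The genuine difference is in the face step. The paper argues by contradiction and averaging. If no row $\vec a\in A$ were active on all of $\{\vec x_i-\theta:\theta\in\FMset\}$, averaging one witness $\theta_j$ per row would give a mean strictly closer than $d_i$ to $\vec x_i$. That needs only the finiteness of $A$ and the max-of-linear-forms formula for the norm. It yields a face exposed by a single row, which is a facet when $A$ is irredundant.

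You instead take $\vec y\in\relint C_i$ and the face $F_i$ with $\vec y\in\relint F_i$. You then use the segment-extension property of relative interiors, together with the fact (recalled in \cref{sec:polytope_geometry}) that a face is cut out of $B$ by its active constraints. This gives a sharper conclusion: $F_i$ is the \emph{smallest} face containing $C_i$, and in fact $\relint C_i\subseteq\relint F_i$. Up to the sign convention $F\mapsto -F$, the resulting tuple is exactly the least element of $S_F$. So your construction directly produces the face type, which the paper later extracts separately in \Cref{lem:unique_least_face_seq} and \Cref{lemma:geometry_comb_type}.
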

	
	\begin{proof}
		By \Cref{lemma:FM_dists}, the distances $d_i = d(\vec x_i,\theta)$ are fixed for all $\theta \in \FMset(\vec x_1, \dots, \vec x_n)$. Hence:
		\[
		\FMset(\vec x_1, \dots, \vec x_n) \subseteq \bigcap_{i \in [n]} \{d_i\partial B + \vec x_i\} \subseteq \bigcap_{i \in [n]} \{d_i B + \vec x_i\} \subseteq \FMset(\vec x_1, \dots, \vec x_n).
		\]
		This follows from \Cref{lemma:FM_dists}, nested intersections, and the definition of the $d_i$. When $B$ is a polytope, the intersection of balls about each $\vec x_i$ is also a polytope. 
		
		We prove the last claim by contradiction. Suppose there is no constraint $\vec a \in A$ which is active for all $\{ \vec x_i - \theta: \theta \in \FMset\}$. Then for all $\vec a_j \in A$, we have some $\theta_1, \dots, \theta_r \in \FMset$ such that $\vec a_j \in A$ is not in $\A(\vec x_i - \theta_j)$. By the convexity of $\FMset$, $\tfrac{1}{r}\sum_{j \in [r]} \theta_j$ is a Fr\'echet mean and must be in the set $d_i\partial B + \vec x_i$. For all $\vec a_j$, we have
		\[
		\vec a_j^\T(\vec x_i - \theta_j) < \Vert \vec x_i - \theta_j\Vert_B = d_i, \qquad \vec a_j^\T(\vec x_i - \theta_k) \leq \Vert \vec x_i - \theta_k\Vert_B = d_i.
		\]
		We obtain that
		\begin{align*}
			\left \Vert \vec x_i - \tfrac{1}{r}\textstyle\sum_{j \in [r]} \theta_j \right \Vert_B = \textstyle\max_{j'} \vec a_{j'}^{\top} \left(\tfrac{1}{r}\sum_{j \in [r]} (\vec x_i -  \theta_j) \right) < \tfrac{1}{r}\sum_{j \in [r]} \|\vec x_i -  \theta_j\|_B = d_i,
		\end{align*}
		contradicting $\tfrac{1}{r}\sum_{j \in [r]} \theta_j \in \FMset$. So for all $\vec x_i$, there is some face $F_i$ of $B$ with $\FMset \subseteq d_i F_i + \vec x_i$. Hence:
		\[
		\FMset(\vec x_1, \dots, \vec x_n) \subseteq \bigcap_{i \in [n]} \{d_iF_i + \vec x_i\} \subseteq \bigcap_{i \in [n]} \{d_i\partial B + \vec x_i\} = \FMset(\vec x_1, \dots, \vec x_n).
		\]
		So the result is proved.
	\end{proof}
	
	We have proved a more general result in the final part of this corollary, which will be useful later:
	
	\begin{corollary}\label{cor:convex_disjoint_from_relint}
		Suppose $S$ is a convex set contained in a polytope $Q$. If $S$ and $\relint Q$ are disjoint, then $S$ is contained in some proper face of $Q$.
	\end{corollary}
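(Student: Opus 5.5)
The plan is to copy the averaging argument from the last part of the proof of \Cref{cor:FMisPolytope}. There, the face $d_i\partial B + \vec x_i$ of the ball played the role of the polytope, and $\FMset$ played the role of the convex set. We may assume $S$ is nonempty, since otherwise it lies in the empty face. We may also assume $Q$ is nonempty.

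First, write $Q$ in halfspace form, $Q = P(A,\vec b)$, with rows $\vec a_1,\dots,\vec a_r$. Since $Q$ need not be full-dimensional, I first record the standard fact that
\[
\relint Q = \{\vec x \in Q : \vec a_j^\T \vec x < b_j \text{ for every } j \text{ not identically tight on } Q\}.
\]
Here a row $j$ is \emph{identically tight} if $\vec a_j^\T \vec x = b_j$ for all $\vec x \in Q$. This fact follows because the identically tight rows cut out $\aff(Q)$ intersected with $Q$, and the strict inequalities define a set that is relatively open in $\aff Q$.

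Next, argue by contradiction. Suppose that for every row $j$ that is not identically tight on $Q$, there is some $\vec s_j \in S$ with $\vec a_j^\T \vec s_j < b_j$. If no such rows exist, then $Q = \relint Q$ is a single affine piece, and the claim is trivial or contradictory anyway. Let $\bar{\vec s}$ be the average of these finitely many points $\vec s_j$. By convexity of $S$, we have $\bar{\vec s} \in S \subseteq Q$. For each non-tight row $j'$, the average satisfies $\vec a_{j'}^\T \bar{\vec s} < b_{j'}$: one summand is strictly below $b_{j'}$ and the rest are at most $b_{j'}$. By the characterisation above, $\bar{\vec s} \in \relint Q$, which contradicts $S \cap \relint Q = \emptyset$.

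Hence there is a row $j$, not identically tight on $Q$, with $\vec a_j^\T \vec s = b_j$ for all $\vec s \in S$. The set $F = Q \cap \{\vec a_j^\T \vec x = b_j\}$ is then a face of $Q$ containing $S$. It is proper because row $j$ is not tight everywhere on $Q$, and it is nonempty because $S \neq \emptyset$.

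The only step needing care is the relative-interior characterisation for non-full-dimensional $Q$. It is the one genuinely geometric input; the rest is the same convex-averaging trick already used in \Cref{cor:FMisPolytope}.
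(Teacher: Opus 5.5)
Your proof is correct and is essentially the paper's argument: the paper justifies this corollary by pointing back to the averaging step at the end of the proof of \Cref{cor:FMisPolytope}, and that is exactly the trick you use. Your restriction to rows not identically tight on $Q$ is a useful refinement. The paper's version is written only for the full-dimensional ball, whereas the corollary is later applied to lower-dimensional faces $X_i - d_i F_i$. Moreover, you only need the easy inclusion of your characterisation, namely that a point of $Q$ strict on every non-tight row lies in $\relint Q$.
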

	
	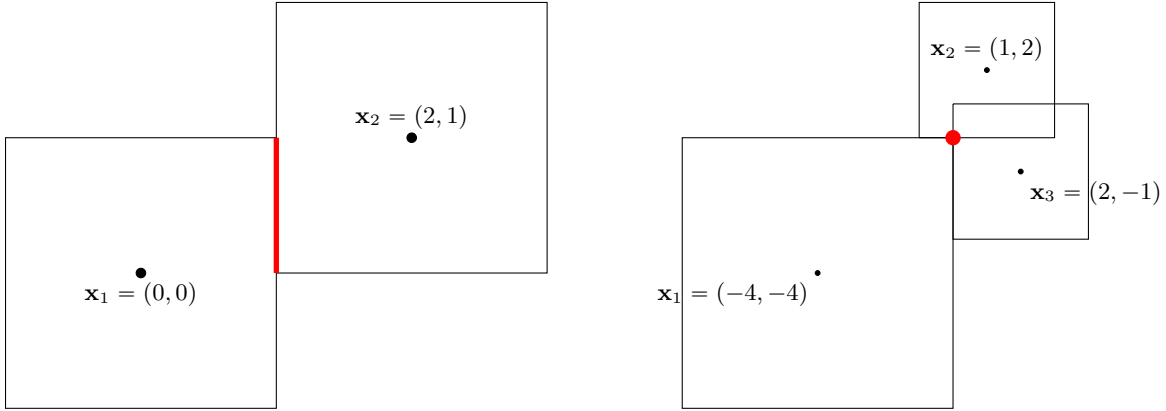
\begin{figure}[!ht]
		\centering
		\begin{subfigure}{0.49\textwidth}
			\centering
			\begin{tikzpicture}[scale=1.8]
				\draw (1,1) -- (-1, 1) -- (-1,-1) -- (1,-1) -- (1,1);
				\draw (1,0) -- (1,2) -- (3,2) -- (3,0) -- (1,0);
				\filldraw (0,0) circle (1pt) node[anchor=north] {\small$\vec x_1 = (0,0)$};
				\filldraw (2,1) circle (1pt)  node[anchor=south] {\small$\vec x_2 = (2,1)$};
				\draw[red, line width = 2pt]  (1,0) -- (1,1);
			\end{tikzpicture}
		\end{subfigure}
		\hfill
		\begin{subfigure}{0.49\textwidth}
			\centering
			\begin{tikzpicture}[scale=0.9]
				\draw (0,0) -- (-4, 0) -- (-4,-4) -- (0,-4) -- (0,0);
				\draw (-0.5,0) -- (1.5, 0) -- (1.5,2) -- (-0.5,2) -- (-0.5,0);
				\draw (0,0.5) -- (0,-1.5) -- (2,-1.5) -- (2,0.5) -- (0,0.5);
				\filldraw (-2,-2) circle (1pt) node[anchor=north east] {\small$\vec x_1 = (-4,-4)$};
				\filldraw (0.5,1) circle (1pt)  node[anchor=south] {\small$\vec x_2 = (1,2)$};
				\filldraw (1,-0.5) circle (1pt)  node[anchor=north west] {\small$\vec x_3 = (2,-1)$};
				\filldraw[red] (0,0) circle (3pt);
			\end{tikzpicture}  
		\end{subfigure}
		\caption{Fr\'echet mean sets (red) are contained in faces of balls that are centred about each data point (\Cref{cor:FMisPolytope}).}
		\label{fig:FM_sets}
	\end{figure}
	
	\begin{remark}
		Corollary \ref{cor:FMisPolytope} highlights why Fr\'echet means with respect to polytope balls are particularly interesting to study, as such balls often have intersections that are positive dimensional, as is illustrated in the left hand side of Figure \ref{fig:FM_sets}. By contrast, the Fr\'echet mean of two points $\vec x_1, \vec x_2$ with respect to a norm with a strictly convex ball will always be the unique point where the balls around $\vec x_1$ and $\vec x_2$ have a single contact point with each other.
	\end{remark}
	
	We conclude with a final corollary on Fr\'echet means for normed spaces with strictly convex balls, to show the contrast with the polytope norm setting; if the unit ball of a norm is strictly convex, then Fr\'echet means must be unique.
	
	\begin{corollary}
		If $B$ is a strictly convex ball, in the sense that the relative interior of any line between two points in $\partial B$ is contained in $\mathrm{int}(B)$, then $\FMset (\vec x_1,\ldots,\vec x_n)$ consists of a single point. In particular, sample Fr\'echet means with respect to $\ell_p$ norms, $p \in (1,\infty)$, are always unique.
	\end{corollary}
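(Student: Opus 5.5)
Since $B$ is a norm ball, the Fr\'echet mean set always exists: the Fr\'echet function $f$ is continuous and coercive, since $f(\theta) \geq \frac{1}{n}(\|\theta\| - \|\vec x_1\|)^2 \to \infty$. Hence the minimum is attained and $\FMset(\vec x_1,\ldots,\vec x_n)$ is nonempty. We then apply \Cref{cor:FMisPolytope}, which gives radii $d_i \ge 0$ with
\[
\FMset = \bigcap_{i\in[n]} \{d_i \partial B + \vec x_i\}.
\]
It therefore suffices to show that this intersection cannot contain two distinct points.

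Suppose for contradiction that $\theta_1 \neq \theta_2$ both lie in $\FMset$. By the convexity of $\FMset$ (\Cref{lemma:NormConvexity}), the midpoint $\theta_3 = \tfrac12(\theta_1+\theta_2)$ also lies in $\FMset$. In particular, for each $i$ we have $\theta_3 - \vec x_i \in d_i\partial B$. Now fix any $i$.

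\emph{Case $d_i>0$.} The points $(\theta_1-\vec x_i)/d_i$ and $(\theta_2-\vec x_i)/d_i$ are two distinct points of $\partial B$. Their midpoint $(\theta_3-\vec x_i)/d_i$ lies in the relative interior of the segment joining them. By strict convexity this midpoint is in $\mathrm{int}(B)$, so $\|\theta_3 - \vec x_i\| < d_i$, contradicting $\theta_3 - \vec x_i \in d_i\partial B$.

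\emph{Case $d_i = 0$.} Then $\theta_1 = \theta_2 = \vec x_i$, which contradicts $\theta_1 \neq \theta_2$ directly.

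Since $n\ge1$, some index $i$ exists, and either case yields a contradiction. Therefore $\FMset$ is a single point.

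For the $\ell_p$ statement with $p\in(1,\infty)$, it remains to check that the $\ell_p$ unit ball is strictly convex in the stated sense. Take $\vec u \neq \vec v$ with $\|\vec u\|_p=\|\vec v\|_p=1$ and suppose $\|t\vec u+(1-t)\vec v\|_p = 1$ for some $t\in(0,1)$. Equality in Minkowski's inequality for $1<p<\infty$ forces $\vec u$ and $\vec v$ to be nonnegatively proportional. Since they have equal norm, this gives $\vec u=\vec v$, a contradiction. Equivalently, one can argue coordinatewise from the strict convexity of $s\mapsto |s|^p$.

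The only nonroutine input is this equality case of Minkowski's inequality, which we take as standard. Everything else follows directly from \Cref{cor:FMisPolytope} and \Cref{lemma:NormConvexity}.
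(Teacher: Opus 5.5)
Your proof is correct and follows essentially the same route as the paper: both use the convexity of $\FMset$, the fixed distances $d_i$ from \Cref{lemma:FM_dists} and \Cref{cor:FMisPolytope}, and strict convexity of $B$ at a point between two Fr\'echet means. The paper phrases this as a strict triangle inequality for non-proportional vectors, while you apply the definition directly at the midpoint, and your Minkowski-equality check for $\ell_p$ plays the same role as the paper's coordinatewise Jensen argument. Your coercivity argument for existence and your separate handling of the $d_i=0$ case are small additions that the paper leaves implicit.
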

	\begin{proof}
		The norm of a strictly convex ball has the property that whenever $\vec x$ is not proportional to $\vec y$ with respect to a non-negative scaling, we have for all $t \in (0,1)$ that 
		\begin{align*}
			\Vert (1-t)\vec x + t \vec y\Vert_B &= \big((1-t)\Vert \vec x \Vert_B + t\Vert \vec y \Vert_B\big)     \bigg\Vert \frac{(1-t)\Vert \vec x \Vert_B}{(1-t)\Vert \vec x \Vert_B + t\Vert \vec y \Vert_B} \frac{\vec x}{\Vert \vec x \Vert_B} + \frac{t\Vert \vec y \Vert_B}{(1-t)\Vert \vec x \Vert_B + t\Vert \vec y \Vert_B} \frac{\vec y}{\Vert \vec y \Vert_B} \bigg\Vert_B 
			\\
			&< (1-t)\Vert \vec x\Vert_B + t \Vert \vec y\Vert_B,
		\end{align*}
		since the above expression is a convex combination of the points $\vec x/\Vert \vec x \Vert_B, \vec y/\Vert \vec y \Vert_B \in \partial B$. Consider two Fr\'echet means, $\theta_1,\theta_2 \in \FMset(\vec x_1, \dots, \vec x_n)$. By the convexity of $\FMset$, and \cref{lemma:FM_dists}, $(1-t)\theta_1 + t\theta_2$ has the same distance to $\vec x_1$ as $\theta_1$ and $\theta_2$. So by the result above we must have that $\vec x_1 - \theta_1 = c(\vec x_1 - \theta_2)$ for some $c \geq 0$, as well as $d(\vec x_1, \theta_1) = d(\vec x_1, \theta_2)$. The only way this can occur is if $c = 1$, and thus $\theta_1 = \theta_2$. 
		
		The $\ell_p$ balls are strictly convex for $p \in (1,\infty)$ since if $\Vert \vec x \Vert_p = \Vert \vec y \Vert_p = 1$ with $\vec x \neq \vec y$, then  
		\begin{align*}
			\Vert (1-t)\vec x + t \vec y \Vert_p^p =   \textstyle\sum_{i = 1}^k \vert (1-t)x_i + ty_i\vert^p & \leq     \textstyle\sum_{i = 1}^k  ((1-t)\vert x_i \vert + t\vert y_i \vert)^p
			\\
			& <   \textstyle\sum_{i = 1}^k  \big((1-t)\vert x_i \vert^p  + t\vert y_i \vert^p\big) = 1.
		\end{align*}
		The strict inequality follows from Jensen's inequality and the fact that $\vec x \neq \vec y$.
	\end{proof}
	
	\section{Large Sample Uniqueness} \label{sec:large_sample_uniqueness}
	
	We continue our study of polytope Fr\'echet means by determining the probability that a sample Fr\'echet mean is unique, assuming that distribution of the samples has a positive density on $\R^k$ with respect to the Lebesgue measure.
	
	\begin{theorem}\label{thm:unique_population_FM}
		Suppose $X \sim \mb P$, with $\esssupp (\mb P) = \R^k$. Then $\FMset (X)$ is unique.
	\end{theorem}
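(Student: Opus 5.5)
We prove the statement directly from \cref{lemma:FM_dists}. The first thing to settle is existence, so that $\FMset(X)$ is nonempty. We implicitly assume $X$ has a finite second moment; otherwise $f \equiv \infty$ and the statement is vacuous. Under that assumption $f$ is finite, convex by \cref{lemma:NormConvexity}, and continuous. It is also coercive, since $\|\vec x - \theta\|^2 \geq \tfrac12\|\theta\|^2 - \|\vec x\|^2$, which gives $f(\theta) \geq \tfrac12\|\theta\|^2 - \E\|X\|^2 \to \infty$. Hence $f$ attains its minimum and $\FMset(X) \neq \emptyset$.

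For uniqueness, suppose $\theta_1, \theta_2 \in \FMset(X)$. Because $\esssupp(\mb P) = \R^k$, \cref{lemma:FM_dists} gives
\[
d(\vec x, \theta_1) = d(\vec x, \theta_2) \quad \text{for every } \vec x \in \R^k .
\]
Choosing $\vec x = \theta_1$ makes the left side zero, so $\|\theta_1 - \theta_2\| = 0$ and therefore $\theta_1 = \theta_2$. The Fréchet mean is thus a single point.

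There is no real obstacle. The only point needing care is existence and finiteness, which the moment assumption together with coercivity handles. Note that the argument uses only the norm structure through \cref{lemma:FM_dists}, so it holds for any norm, not only for polytope norms.
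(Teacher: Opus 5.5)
Your proof is correct and takes the same approach as the paper, whose proof is a one-line appeal to \cref{lemma:FM_dists}; your substitution $\vec x = \theta_1$ is exactly the step it leaves implicit. The existence argument via coercivity under a finite second moment is a useful addition the paper does not spell out, though when $f \equiv \infty$ the claim is better described as excluded by hypothesis than as vacuous, since every $\theta$ would then trivially be a minimiser.
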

	
	\begin{proof}
		This follows directly from \Cref{lemma:FM_dists}. 
	\end{proof}
	
	As shown by \cite{BhatPatFMI}, sample Fr\'echet means are strongly consistent estimators for population Fr\'echet means.
	
	\begin{theorem}\label{thm:consistent_FM}
		Let $\{X_i\}_{i \in \N}$ be a sequence of i.i.d. random vectors with $X_i \sim \mb{P}$, where $\mb P$ has a unique population Fr\'echet mean $\theta_*$. Let  $\theta_n$ be a random variable such that $\theta_n \in \FMset(X_1, \dots, X_n)$ almost surely. Then $\theta_n$ converges almost surely to $\theta_*$.
	\end{theorem}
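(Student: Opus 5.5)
We prove \cref{thm:consistent_FM} with the standard argmin-consistency scheme: show that the sample Fréchet functions converge to the population one uniformly on compacts, almost surely, and then use convexity to keep the sample means inside a compact set. Throughout we assume $\E\|X\|_B^2<\infty$, which is implicit in the existence of the population mean $\theta_*$. Write $f_n(\theta)=\frac1n\sum_{i\le n}\|X_i-\theta\|_B^2$ and $f(\theta)=\E\|X-\theta\|_B^2$.

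\textbf{Step 1 (pointwise convergence).} For each fixed $\theta$, the strong law of large numbers gives $f_n(\theta)\to f(\theta)$ almost surely. Intersecting over a countable dense set $D\subset\R^k$, we get a single almost sure event on which $f_n(\theta)\to f(\theta)$ for every $\theta\in D$.

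\textbf{Step 2 (uniform convergence on compacts).} On this event we upgrade pointwise convergence on $D$ to local uniform convergence on $\R^k$. One route is the classical fact that convex functions converging pointwise on a dense set converge uniformly on compacts (Rockafellar, Thm.~10.8); the $f_n$ are convex by \Cref{lemma:NormConvexity}. Alternatively, one can argue directly with the local Lipschitz bound
\[
|f_n(\theta)-f_n(\theta')|\le \|\theta-\theta'\|_B\,\tfrac1n\textstyle\sum_i\big(2\|X_i\|_B+\|\theta\|_B+\|\theta'\|_B\big),
\]
whose right-hand factor is almost surely bounded in $n$ by the strong law applied to $\|X_i\|_B$.

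\textbf{Step 3 (localisation and conclusion).} Fix $\epsilon>0$ and let $S_\epsilon=\{\theta:\|\theta-\theta_*\|_B=\epsilon\}$. Since $f$ is continuous and $\theta_*$ is its unique minimiser, $\delta:=\min_{S_\epsilon}f-f(\theta_*)>0$. By Step 2, for all large $n$ we have $\sup_{\bar B_\epsilon(\theta_*)}|f_n-f|<\delta/3$, which gives
\[
f_n(\theta)>f_n(\theta_*)+\delta/3 \quad\text{for all } \theta\in S_\epsilon.
\]
Suppose $\theta_n$ lies outside $\bar B_\epsilon(\theta_*)$. The segment from $\theta_*$ to $\theta_n$ then meets $S_\epsilon$ at some point $\theta'$. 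Convexity of $f_n$ gives $f_n(\theta')\le\max\{f_n(\theta_*),f_n(\theta_n)\}=f_n(\theta_*)$, using $f_n(\theta_n)\le f_n(\theta_*)$ because $\theta_n$ is a minimiser. This contradicts the previous display. Hence $\|\theta_n-\theta_*\|_B\le\epsilon$ eventually, almost surely. Taking $\epsilon=1/m$ and intersecting over $m\in\N$ gives $\theta_n\to\theta_*$ almost surely.

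The main obstacle is Step 2, and in particular keeping every statement on a single almost sure event that does not depend on $\theta$. Convexity makes this manageable: the $f_n$ are convex, finite and share a common local Lipschitz bound. Note that $\theta_n$ need only be some (possibly non-measurably chosen) element of $\FMset(X_1,\dots,X_n)$. This causes no difficulty, because the argument above holds for every minimiser simultaneously on the good event.
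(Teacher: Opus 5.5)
Your proof is correct, but it takes a different route from the paper. The paper gives no argument of its own. It invokes Theorem 2.3 of Bhattacharya and Patrangenaru, whose only hypotheses are that closed bounded sets are compact and that the population Fr\'echet function is finite.

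That cited result holds in any metric space with the Heine--Borel property, so its proof cannot use convexity. It has to show separately that the sample Fr\'echet mean sets eventually lie in a fixed bounded set, via a triangle-inequality lower bound on $f_n$ far from a base point. Only then does it use uniform convergence on that compact set.

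You instead use the normed-space structure. Convexity of $f_n$ (\cref{lemma:NormConvexity}) upgrades pointwise convergence on a dense set to locally uniform convergence. More importantly, it replaces the boundedness step entirely with your segment argument through $S_\epsilon$. One small unstated step, when you use Rockafellar's Theorem 10.8: the locally uniform limit coincides with $f$ everywhere, because both are finite convex (hence continuous) functions agreeing on the dense set $D$.

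Your route gives a self-contained, elementary proof. It also shows that the whole set $\FMset(X_1,\dots,X_n)$ eventually lies in $\bar B_\epsilon(\theta_*)$ on a single almost-sure event, so measurability of the selection $\theta_n$ never matters. The citation gives brevity and generality: it applies beyond normed spaces, and even without uniqueness it places the sample mean sets eventually inside any neighbourhood of the population mean set.

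Both approaches rest on the same finiteness assumption $\E\|X\|_B^2<\infty$, which the paper also invokes explicitly.
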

	
	\begin{proof}
		This is a direct application of Theorem 2.3 of \citep{BhatPatFMI}. We note that closed and bounded subsets of $\R^k$ are compact, and we have assumed the population Fr\'echet function is finite. We have also assumed that $\mb P$ has a unique Fr\'echet mean $\theta_*$. Then by Theorem 2.3 of \cite{BhatPatFMI}, $\theta_n$ is a strongly consistent estimator for $\theta_*$.
	\end{proof}
	
	The consistency of sample Fr\'echet means does not guarantee their uniqueness. In \cref{thm:uniqueness_prob_convergence}, we show that the probability of a unique sample Fr\'echet mean tends to one as the sample size increases.
	
	\begin{theorem}\label{thm:uniqueness_prob_convergence}
		Suppose $X \sim \mb P$, with $\esssupp(\mb P) = \R^k$. Let $p_n$ be the probability that $\FMset(X_1,\dots,X_n)$ consists of a single element. Then $p_n$ converges to 1 as $n \rightarrow \infty$.
	\end{theorem}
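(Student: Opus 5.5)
The plan is to show that, with probability tending to one, the sample contains enough points in a fixed configuration around the population Fr\'echet mean to force any two sample Fr\'echet means to coincide. By \cref{thm:unique_population_FM} the population Fr\'echet mean $\theta_*$ is unique. The key input is the face structure from \cref{cor:FMisPolytope}:
\[
\FMset(X_1,\dots,X_n) \subseteq \bigcap_i \{d_iF_i + X_i\},
\]
where each $F_i$ is a proper face of $B$.

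\textbf{Step 1: a fixed configuration around $\theta_*$.} Write $B = P(A,\vec 1)$ with $A$ irredundant, so its rows $\vec a_1,\dots,\vec a_m$ are exactly the facet normals. Each row $\vec a_j$ corresponds to a facet $G_j$ with $\A(G_j)=\{\vec a_j\}$. Since $B$ is bounded, the rows of $A$ span $\R^k$.

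For each $j$, pick a nonempty open ball $U_j$ with $\Closure(U_j) - \theta_* \subset \relint(\R_+ G_j)$; this open cone is full-dimensional. Because $\Closure(U_j)$ is compact and sits in an open cone, there is a $\delta>0$ such that $U_j - \theta \subset \relint(\R_+G_j)$ for every $\theta$ with $\|\theta-\theta_*\|_B<\delta$ and every $j$.

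\textbf{Step 2: the deterministic argument.} Let $E_n$ be the event that:
\begin{itemize}
\item each $U_j$ contains some sample point $X_{i_j}$, with $i_j \le n$; and
\item some (measurably selected) $\theta_n\in\FMset(X_1,\dots,X_n)$ satisfies $\|\theta_n-\theta_*\|_B<\delta$.
\end{itemize}
I will show that on $E_n$ the Fr\'echet mean set is the singleton $\{\theta_n\}$.

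Fix $j$ and $i=i_j$. By \cref{cor:FMisPolytope}, $(X_i-\theta)/d_i \in F_i$ for all $\theta\in\FMset$. Here $d_i>0$, since $X_i \neq \theta_n$. At $\theta=\theta_n$, the point $(X_i-\theta_n)/d_i$ lies in $\relint G_j$. The proper face $F_i$ contains a relative-interior point of the facet $G_j$, so $F_i \supseteq G_j$ and hence $F_i=G_j$. Therefore
\[
\vec a_j^\T(X_i-\theta) = d_i = \vec a_j^\T(X_i-\theta_n) \quad \text{for all } \theta\in\FMset,
\]
which gives $\vec a_j^\T(\theta-\theta_n)=0$. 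Ranging over all $j$, and using that the $\vec a_j$ span $\R^k$, we get $\theta=\theta_n$. So $E_n$ implies uniqueness, and $p_n\ge \mb P(E_n)$.

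\textbf{Step 3: $\mb P(E_n)\to1$.} For the first condition, $\esssupp(\mb P)=\R^k$ gives $q:=\min_j \mb P(U_j)>0$. Hence the probability that some $U_j$ receives no sample point among the first $n$ is at most $m(1-q)^n\to 0$.

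For the second condition, \cref{thm:consistent_FM} applied to a measurable selection $\theta_n$ gives $\theta_n\to\theta_*$ almost surely. So $\mb P(\|\theta_n-\theta_*\|_B\ge\delta)\to0$.

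A union bound then gives $\mb P(E_n^c)\to0$, hence $p_n\to1$.

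\textbf{Main obstacle.} The subtle point is the uniformity handled in Steps 1 and 2. Each $\theta_n$ is random, and the argument needs the cones $X_i-\theta$ to be identified correctly for every point of the Fr\'echet mean set, not just near $\theta_*$. This is resolved by two ingredients:
\begin{itemize}
\item identifying the face $F_i$ from a single point $\theta_n$ that is close to $\theta_*$;
\item using the fact from \cref{cor:FMisPolytope} that one face $F_i$ works for the whole set $\FMset$.
\end{itemize}
Together these avoid needing Hausdorff convergence of the whole set. A minor technicality is the existence of a measurable selection $\theta_n$. One option is the point of $\FMset$ minimising a fixed strictly convex function such as $\|\cdot\|_2^2$, which is measurable because $\FMset$ is the argmin of a jointly continuous function. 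Alternatively one can use the version of \cref{thm:consistent_FM} in \citep{BhatPatFMI}, which states that $\FMset$ eventually lies in every neighbourhood of $\theta_*$.
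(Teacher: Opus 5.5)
Your proof is correct and follows essentially the same route as the paper. The paper also fixes, for each facet, a full-dimensional region lying uniformly in that facet's cone for every $\theta$ near $\theta_*$, derives uniqueness from \cref{cor:FMisPolytope} together with the fact that the facet normals span $\R^k$, and concludes with a union bound combining the $(1-c_i)^n$ terms with the consistency result \cref{thm:consistent_FM}. Your explicit argument that the face supplied by \cref{cor:FMisPolytope} must equal the facet, because it contains a relative-interior point of that facet, spells out a step the paper leaves implicit; the paper uses the unit ball around $\theta_*$ and points far out in the cones where you use a small $\delta$ and small balls $U_j$, which is only a cosmetic difference.
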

	\begin{proof}
		Suppose that there is some sample Fr\'echet mean $\theta \in \FMset(\sample)$ such that for every possible active constraint $\vec a_i \in A$, there is some data point $X_{j(i)}$ satisfying $\A(X_{j(i)}-\theta) = \vec a_i$. Then by \Cref{cor:FMisPolytope}, the Fr\'echet mean set is contained in the intersection of corresponding faces, $\bigcap_i \|\theta-X_{j(i)}\| F_i+X_{j(i)}$ and so
		\begin{align}
			\FMset(X_1,\dots,X_n) \subseteq \{ \vec x: \vec a_i^{\T}(\vec x-X_{j(i)}) = \|\theta-X_{j(i)}\| \} \label{eq:FM_uniqueness_linear_constraints}.
		\end{align}
		As the set of all $\vec a_i$ spans $\R^k$, the linear constraints in \ref{eq:FM_uniqueness_linear_constraints} define a unique point in $\FMset(X_1, \dots, X_n)$. It now suffices to show that, with probability tending to 1, there is some Fr\'echet mean $\theta$ such that $A = \{ \A(X_j - \theta), j=1,\dots,n \}$.
		
		For each facet $F_i$, consider the intersection $S_i = \bigcap_{\theta \in B_1(\theta_*)} (\theta - \relint (\R_+ F_i)) \subset \R^k$ (see \Cref{fig:unique_prob_to_1}). This is the set of points $\vec x$ with $\A(\vec x - \theta) = \vec a_i$ for all $\theta$ in the unit ball about $\theta_*$. This intersection is full dimensional, so $\mb P (X_j \in S_i) = c_i > 0$. Let $b_n$ be the probability that there is some $\theta \in \FMset (X_1, \dots, X_n)$ in $B_1(\theta_*)$. By the argument above, we can bound $p_n$ below using $b_n, c_i$:
		\begin{align*}
			p_n &\geq \mb P(A = \{ \A(X_j - \theta), j=1,\dots,n \}) \\
			&\geq \mb P(\{\emptyset \neq \FMset(X_1,\dots,X_n) \cap B_1(\theta_*)\} \cap \{ \forall C_i, \emptyset \neq C_i \cap \{ X_1, \dots, X_n \} \}) \\
			&\geq 1 - (1-b_n) - \textstyle \sum_{i \in [r]} (1-c_i)^n
		\end{align*}
		By the consistency of $\theta_n$ (\Cref{thm:consistent_FM}), $b_n \rightarrow 1$. So this lower bound converges to 1.
		\begin{figure}[ht!]
			\centering
			\begin{tikzpicture}
				\draw (0,-1) -- (1, 0) -- (0,1) -- (-1,0) -- (0,-1);
				\draw (-6,-1) -- (6,-1);
				\draw (-6,1) -- (6,1);
				\draw (-1,-3) -- (-1,3);
				\draw (1,-3) -- (1,3);
				\draw[<->] (-1,0) -- (0,0) node[anchor=north east] {$1$};
				\filldraw (0,0) circle (2pt) node[anchor=north west] {\small$\theta_*$};
				\filldraw (-0.3,0.4) circle (2pt) node[anchor=south east] {\small$\theta_n$};
				\node at (4,2) {$S_2$};
				\node at (-4,2) {$S_1$};
				\node at (4,-2) {$S_3$};
				\node at (-4,-2) {$S_4$};
				\filldraw (2,2) circle (2pt) node[anchor=south west] {\small$X_{j(2)}$};
				\filldraw (-1.8,-2) circle (2pt) node[anchor=north east] {\small$X_{j(4)}$};
				\filldraw (2,-2) circle (2pt) node[anchor=north west] {\small$X_{j(3)}$};
				\filldraw (-2.4,2.3) circle (2pt) node[anchor=south east] {\small$X_{j(1)}$};
			\end{tikzpicture}
			\caption{A visualisation of the cones $S_i$ (defined in the proof of \cref{thm:uniqueness_prob_convergence}) in the case of the $\ell_1$ norm, and a sample such that every cone contains some data point. As each cone contains a data point, whenever there is a Fr\'echet mean within the unit ball of $\theta_*$, it must be a unique Fr\'echet mean.}
			\label{fig:unique_prob_to_1}
		\end{figure}
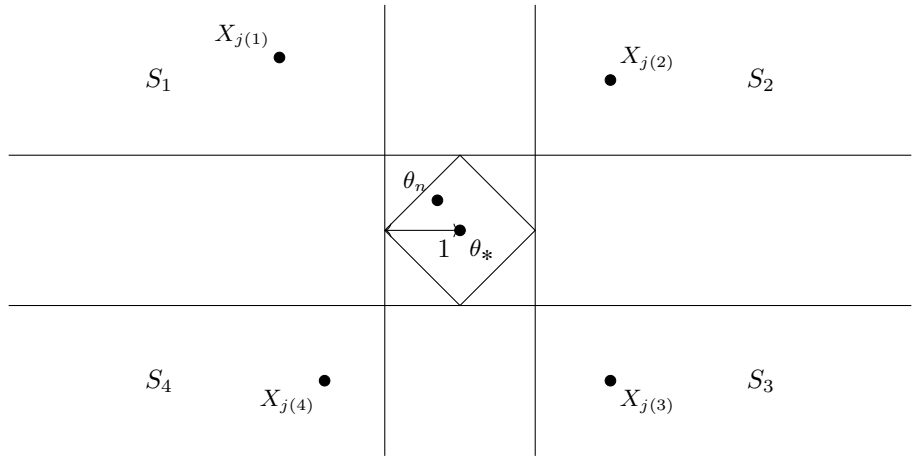
	\end{proof}
	
	We have shown that the probability of uniqueness converges to one, now we argue that it will never be exactly one. This is best seen by considering a sample close to a  line $\vec a \R$ for some $\vec a \in A$, such that we have $\A(X_{j_1}-X_{j_2}) = \vec a$ or $-\vec a$ for all $j_1 \neq j_2 \leq n$; such a sample will occur with positive probability. For such a sample, the Fr\'echet mean set is a $(k-1)$ dimensional polytope contained in the hyperplane $\{\vec x: \vec a^{\T}\vec x = n^{-1} \sum_{j\leq n} \vec a^{\T} X_j \}$, and so is not unique. \Cref{fig:non-unique_FM_sets} shows an example of this using the $\ell_{\infty}$-norm.
	
	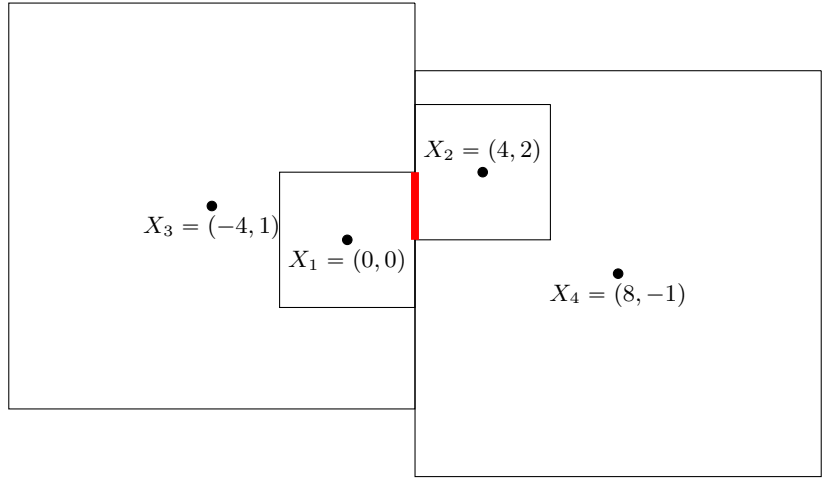
\begin{figure}[ht]
		\centering
		\begin{tikzpicture}[scale=1.8]
			
			\coordinate (A) at (0,0);
			\coordinate (B) at (-1,0.25);
			\coordinate (C) at (2,-0.25);
			\coordinate (D) at (1,0.5);            
			
			\draw ($(A) + 0.5*(1,1)$) -- ($(A) + 0.5*(-1,1)$) -- ($(A) + 0.5*(-1,-1)$) -- ($(A) + 0.5*(1,-1)$) -- ($(A) + 0.5*(1,1)$);
			\draw ($(B) + 1.5*(1,1)$) -- ($(B) + 1.5*(-1,1)$) -- ($(B) + 1.5*(-1,-1)$) -- ($(B) + 1.5*(1,-1)$) -- ($(B) + 1.5*(1,1)$);
			\draw ($(C) + 1.5*(1,1)$) -- ($(C) + 1.5*(-1,1)$) -- ($(C) + 1.5*(-1,-1)$) -- ($(C) + 1.5*(1,-1)$) -- ($(C) + 1.5*(1,1)$);
			\draw ($(D) + 0.5*(1,1)$) -- ($(D) + 0.5*(-1,1)$) -- ($(D) + 0.5*(-1,-1)$) -- ($(D) + 0.5*(1,-1)$) -- ($(D) + 0.5*(1,1)$);
			\filldraw (A) circle (1pt) node[anchor=north] {\small $X_1 = (0,0)$};
			\filldraw (B) circle (1pt) node[anchor=north] {\small $X_3 = (-4,1)$};
			\filldraw (C) circle (1pt) node[anchor=north] {\small $X_4 = (8,-1)$};
			\filldraw (D) circle (1pt)  node[anchor=south] {\small $X_2 = (4,2)$};
			\draw[red, line width = 3pt]  (0.5,0) -- (0.5,0.5);
		\end{tikzpicture}
		\caption{A data configuration which occurs with positive probability and has non-unique $\ell_\infty$ Fr\'echet means.}
		\label{fig:non-unique_FM_sets}
	\end{figure}
	
	\begin{remark}\label{rmk:non_unique_for_2n}
		The probability of uniqueness is always zero for samples of size 2. With probability 1, $X_2$ is in $X_1 + \relint \R_+ F$ for some facet $F$ of $B$. In this case, the Fr\'echet mean set $\FMset$ is a $k-1$ dimensional polytope contained in the hyperplane $\{ \vec x : \vec a^\T\vec x = \vec a^\T(X_1+X_2)/2 \}$ where $\vec a$ is the normal vector of the facet $F$.
	\end{remark}
	
	We've verified that the population Fr\'echet mean is unique for positive density distributions. We have also used the consistency of sample Fr\'echet means to deduce that the probability of a sample having a unique Fr\'echet mean converges to 1 (but never equals 1) as our sample size increases. In the following subsection, we study the uniqueness behaviour of Fr\'echet means for small samples.
	
	\section{Sample Threshold for Unique Fr\'echet Means}
	\label{sec:FM_uniqueness}

	In this section, we identify the combinatorial conditions that dictate the sample size needed for the sample Fr\'echet mean to be unique with positive probability. We show that this sample size is a threshold in that there is some value $N(B)$ such that the probability of a unique Fr\'echet mean is positive for a sample of size $n$ if and only if $n \geq N(B)$. For the $\ell_1$ norm, this sample threshold is just 3, but for the $\ell_{\infty}$ norm, this sample threshold is equal to the dimension of the space, $k$.
	
	To understand the dimension of Fr\'echet mean sets, we study which faces of balls centred about our data points can be intersected to define the Fr\'echet mean set (\cref{cor:FMisPolytope}). The dimension of the Fr\'echet mean set $\FMset$ is identifiable from the minimal faces whose intersection is the Fr\'echet mean set. We call these minimal faces the \emph{face type} of a sample. We show that the face type of a sample is almost surely well-defined and determines the dimension of $\FMset$, then we identify which face types occur with positive probability by considering subgradients. To define the face type of a sample, we will assume that no data point is also a Fr\'echet mean of the sample, which is true almost surely.
	
	\begin{lemma}\label{lemma:data_is_not_FM}
		If $\esssupp(\mathbb P) = \R^k$ then with probability one, the random sample points $ \sample \overset{i.i.d.}{\sim} \mathbb P$ are not Fr\'echet means of the full sample; for all $i$, $X_i \notin \FMset(\sample)$.
	\end{lemma}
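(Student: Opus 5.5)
By symmetry and a union bound over $i \in [n]$, it is enough to show that $\mathbb P(X_1 \in \FMset(\sample)) = 0$. I will first condition on $X_2, \dots, X_n$ and then integrate over $X_1$. The hypothesis $\esssupp(\mathbb P)=\R^k$ alone does not give absolute continuity, so I will additionally use the standing assumption of the section that $\mathbb P$ has a positive density with respect to Lebesgue measure. Under that assumption it suffices to show that, for fixed $\vec x_2,\dots,\vec x_n$, the set
\[
E = \{ \vec x_1 \in \R^k : \vec x_1 \in \FMset(\vec x_1,\vec x_2,\dots,\vec x_n)\}
\]
is Lebesgue-null. Fubini's theorem then finishes the argument.

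To characterise $E$, I use \cref{rmk:subgradient}. At $\theta = \vec x_1$ the first summand is $\|\vec x_1-\theta\|_B^2 = 0$, and its subgradient at $\theta = \vec x_1$ is $\{\vec 0\}$, because the squared norm is differentiable at the origin with zero gradient. Hence $\vec x_1$ is a Fr\'echet mean exactly when
\[
\vec 0 \in \sum_{i=2}^n \|\vec x_i - \vec x_1\|_B\, F_i^{\Diamond},
\]
where $F_i$ is the face with $\vec x_i - \vec x_1 \in \relint(\R_+F_i)$. Equivalently, $\vec x_1$ minimises the Fr\'echet function $g(\theta) = \sum_{i\ge 2}\|\vec x_i-\theta\|_B^2$ of the remaining $n-1$ points. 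So $E = \FMset(\vec x_2,\dots,\vec x_n)$. I should double-check this equivalence: adding the term $\|\vec x_1-\theta\|_B^2$, which vanishes at $\theta=\vec x_1$ together with its subgradient, does not change the optimality condition at $\vec x_1$.

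The main step is to show that $\FMset(\vec x_2,\dots,\vec x_n)$ is Lebesgue-null for almost every configuration, i.e.\ that it has dimension at most $k-1$. By \cref{cor:FMisPolytope} it is contained in $d_2\partial B + \vec x_2$, the boundary of a polytope, whenever $d_2>0$. That boundary is a finite union of facets and hence Lebesgue-null. The only way this fails is $d_2 = 0$ for every choice of index, meaning all $d_i$ vanish and $\vec x_2 = \dots = \vec x_n$; that is a null event when $n\ge 3$. When $n=2$, $E=\{\vec x_2\}$, which is trivially null. If $n=1$ the statement is false ($X_1$ is its own mean), so I will treat that case as excluded.

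Putting these together: for almost every $(\vec x_2,\dots,\vec x_n)$ the section $E$ is null, so $\mathbb P(X_1\in\FMset)=0$. The union bound over $i$ then gives the lemma. The step I expect to need the most care is the subgradient identification of $E$ with $\FMset(\vec x_2,\dots,\vec x_n)$, together with handling the degenerate case where the remaining points coincide.
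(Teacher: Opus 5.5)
Your proposal is correct and follows essentially the same route as the paper. The paper also uses the fact that the subgradient of $d(X_1,\cdot)^2$ at $X_1$ is $\{\vec 0\}$ (via the sum rule, Theorem 23.8 of Rockafellar) to reduce the event to $X_1 \in \FMset(X_2,\dots,X_n)$. It then invokes \cref{cor:FMisPolytope} to see that this set has codimension at least one, and finishes with a union bound over $i$. Your explicit conditioning/Fubini step, and your remarks that the section's positive-density assumption (not just full essential support) and $n \geq 2$ are needed, make precise what the paper's ``hence has $\mathbb P$-measure zero'' tacitly assumes.
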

	\begin{proof}
		We claim $X_1$ is a Fr\'echet mean of $\sample$ iff $X_1$ is a Fr\'echet mean of $X_2, \dots, X_n$. Denote the Fr\'echet function for the sample $\sample$ as $f(\vec x)$, and the Fr\'echet function for $X_2, \dots, X_n$ as $f_{\hat{1}}(\vec x)$, so:
		\[
		f(\vec x) = n^{-1} \sum_{i=1}^n d(X_i, \vec x), \qquad
		f_{\hat{1}}(\vec x) = (n-1)^{-1} \sum_{i=2}^n d(X_i, \vec x).
		\]
		The subgradient of $d(X_1, \cdot)^2$ at $X_1$ is 0, so by Theorem 23.8 in \cite{rockafellar1970convex}:
		\[
		n \partial f(X_1) = (n-1) \partial f_{\hat{1}}(X_1).
		\]
		The point $X_1$ is a Fr\'echet mean of $\sample$ if and only if $0 \in \partial f(X_1)$, and is a Fr\'echet mean of $X_2, \dots X_{n}$ if and only if $0 \in \partial f_{\hat{1}}(X_1)$. These conditions are equivalent, so our claim is proved.
		
		We now show that the sample points $\sample$ are almost never Fr\'echet means of the full sample $\sample$. Let $A_1$ be the event that $X_1$ is a Fr\'echet mean of the full sample, or equivalently $X_1 \in \FMset(X_2, \dots, X_n)$. By Corollary \ref{cor:FMisPolytope}, the set $\FMset(X_2,\dots, X_n)$ has co-dimension $\geq 1$, and hence has $\mathbb P$-measure zero. Therefore $\mathbb P(A_1) = 0$. We can define $A_2, \dots, A_n$ similarly, and by taking a union over $A_1, \dots, A_n$ we conclude that the Fr\'echet mean set intersects the sample $\sample$ with probability zero.
	\end{proof}
	
	\subsection{Face Types}
	
	As shown in \Cref{cor:FMisPolytope}, the Fr\'echet mean set is an intersection of faces of balls centred at each data point. We will define the face type of a sample as the minimal sequence of faces of balls where the Fr\'echet mean set is their intersection. The lemma below ensures this is almost surely well-defined.
	
	\begin{lemma}
		\label{lem:unique_least_face_seq}
		Let $\sample$ be a finite sample with Fr\'echet mean set $\FMset$ such that $X_i \notin \FMset$ for every $i$. Let $d_i = d_B(X_i, \FMset)$. Then the set of tuples of faces
		\[
		S_F(\sample) = \bigg\{ (F'_1, \dots, F'_n) \ : \textstyle \bigcap_{i = 1}^n (X_i - d_i F'_i) = \FMset \bigg\} \subseteq \mathfrak{F}(B)^n
		\]
		has a least element $(F_1, \dots, F_n) \in S_F$ with respect to coordinate-wise containment.
	\end{lemma}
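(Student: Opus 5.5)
The plan is to define the candidate least element explicitly and then check two things: that it lies in $S_F$, and that it is below every element of $S_F$. For each $i$, consider the set $U_i = \{(X_i - \theta)/d_i : \theta \in \FMset\} \subseteq \partial B$. This is well-defined because $d_i > 0$ (since $X_i \notin \FMset$). It lies in $\partial B$ because, by \Cref{lemma:FM_dists} (or \Cref{cor:FMisPolytope}), $\|X_i - \theta\|_B = d_i$ for every $\theta \in \FMset$. As an affine image of the convex set $\FMset$ (\Cref{lemma:NormConvexity}), $U_i$ is convex. We then let $F_i$ be the smallest face of $B$ containing $U_i$, namely the intersection of all faces containing $U_i$. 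This is a face because the face lattice is closed under intersection, and it is proper because $U_i \subseteq \partial B$ is convex. Indeed, by \Cref{cor:convex_disjoint_from_relint} applied with $Q = B$, the set $U_i$ lies in some proper face, so the intersection is a proper face.

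\textbf{Membership in $S_F$.} By construction $\FMset \subseteq X_i - d_i F_i$ for each $i$, so $\FMset \subseteq \bigcap_i (X_i - d_i F_i)$. For the reverse inclusion, \Cref{cor:FMisPolytope} gives some tuple $(G_1,\dots,G_n)$ with $\bigcap_i (X_i - d_i G_i) = \FMset$. The sign convention $X_i - d_iF$ versus $d_iF + X_i$ is handled by central symmetry, replacing $G_i$ with $-G_i$. Since $\FMset \subseteq X_i - d_i G_i$, each $G_i$ is a face containing $U_i$, so $F_i \subseteq G_i$. Hence
\[
\bigcap_i (X_i - d_iF_i) \subseteq \bigcap_i (X_i - d_iG_i) = \FMset.
\]
Therefore $(F_1,\dots,F_n) \in S_F$.

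\textbf{Leastness.} Take any $(F'_1,\dots,F'_n) \in S_F$. Since $\FMset = \bigcap_j (X_j - d_jF'_j) \subseteq X_i - d_iF'_i$, we get $U_i \subseteq F'_i$. So $F'_i$ is a face containing $U_i$, and by minimality $F_i \subseteq F'_i$ coordinatewise. This shows $(F_1,\dots,F_n)$ is the least element.

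The main point needing care is that "smallest face containing a set" is well-defined. This follows because the intersection of faces of a polytope is again a face: each face is $B$ intersected with the set where some inequalities of $A\vec x \le \vec 1$ are tight, and intersecting such sets just makes more constraints tight. Equivalently, $F_i$ is the face on which exactly the constraints in $\bigcap_{\theta} \A(X_i - \theta)$ are active. Once this is in place, the remaining steps are formal.
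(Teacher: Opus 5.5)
Your proof is correct, but it reaches the least element by a slightly different route from the paper's.

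\textbf{The paper's route.} The paper defines $F_i$ as the coordinatewise intersection $\bigcap_{(F'_1,\dots,F'_n)\in S_F} F'_i$ over the whole family $S_F$, which is nonempty by \Cref{cor:FMisPolytope}. Leastness is then automatic. Membership in $S_F$ follows by pulling the intersection through the injective map $x \mapsto X_i - d_i x$ (this needs $d_i>0$) and interchanging the two intersections, giving $\bigcap_{S_F}\FMset=\FMset$.

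\textbf{Your route.} You define $F_i$ intrinsically as the smallest face of $B$ containing $(X_i-\FMset)/d_i$. Leastness then needs the one-line observation that every $F'_i$ occurring in $S_F$ contains this set. Membership needs monotonicity together with one known element of $S_F$.

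\textbf{Comparison.} The paper's argument is shorter. Yours has two advantages.
\begin{itemize}
\item It identifies the face type concretely, coordinate by coordinate, as the minimal face containing $(X_i-\FMset)/d_i$. This is the viewpoint behind \cref{lemma:geometry_comb_type}.
\item It makes explicit that an intersection of faces of $B$ is again a face. The paper uses this tacitly when it treats its coordinatewise intersection as an element of $\mathfrak{F}(B)^n$.
\end{itemize}

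\textbf{A small simplification.} For the reverse inclusion you do not need the face part of \Cref{cor:FMisPolytope} or the sign adjustment. Since $F_i\subseteq B=-B$, the ball description in that corollary gives $\bigcap_i (X_i-d_iF_i)\subseteq\bigcap_i (X_i+d_iB)=\FMset$ directly.
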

	
	\begin{proof}
		By \Cref{cor:FMisPolytope}, $S_F(\sample)$ is not empty. Define $(F_1, \dots, F_n)$ by $F_i = \bigcap_{(F'_1, \dots, F'_n) \in S_F} F'_i$.
		Then for all $(F'_1, \dots, F'_n) \in S_F(\sample)$ we have that $(F_1, \dots, F_n) \leq (F'_1, \dots, F'_n)$. \\
		It suffices to show that $(F_1, \dots, F_n) \in S_F(\sample)$. This follows from:
		\begin{align*}
			\textstyle \bigcap_{i = 1}^n (X_i - d_i F_i) &= \textstyle \bigcap_{i = 1}^n \bigg(X_i - d_i \big( \textstyle \bigcap_{(F'_1, \dots, F'_n) \in S_F} F'_i\big)\bigg) \\
			&= \textstyle \bigcap_{(F'_1, \dots, F'_n) \in S_F} \bigcap_{i = 1}^n (X_i - d_i F'_i) \\
			&= \textstyle \bigcap_{(F'_1, \dots, F'_n) \in S_F} \FMset = \FMset.
		\end{align*}
	\end{proof}
	Using the lemma above, we can define the face type of a sample:
	\begin{definition}[Face Type]
		Consider a sample $\sample$ such that no data point is a Fr\'echet mean of the full dataset. Such a sample is defined to have face type $(F_1, \dots, F_n)$ about the Fr\'echet mean set $\FMset$ when $(F_1, \dots, F_n)$ is the least element of $S_F(\sample)$.
	\end{definition}
	
	By \cref{lemma:data_is_not_FM} and \cref{lem:unique_least_face_seq}, the face type of a sample is well-defined almost surely. However, we will use a more geometric description of face type through the rest of this section. Both here and throughout the rest of this section we use $d_i \coloneqq d_B(X_i, \FMset)$ which is well-defined by \cref{cor:FMisPolytope}.
	
	\begin{lemma} \label{lemma:geometry_comb_type}
		Let $\sample$ be a sample with face type $(F_1, \dots F_n)$ and $d_i = d_B(X_i,\FMset)$. Then we have that 
		$
		\relint \FMset = \textstyle\bigcap_{i = 1}^n \relint (X_i - d_i F_i).
		$
		A sample has face type $(F_1, \dots, F_n)$ if and only if there is some $\theta \in \relint \FMset$ such that for all $i \in [n]$, $X_i - \theta \in \relint (\R_+F_i)$.
	\end{lemma}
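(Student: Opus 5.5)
Write $P_i \coloneqq X_i - d_iF_i$, so that $\FMset=\bigcap_i P_i$ by definition of face type. The plan is to prove the relative interior identity first, using minimality of the face type. The characterisation of face type will then follow from it.

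\textbf{Step 1: the relative interior identity.} Suppose first that $\bigcap_i \relint P_i \neq \emptyset$. A standard fact of convex analysis (Rockafellar, Theorem 6.5) gives $\relint \bigcap_i P_i = \bigcap_i \relint P_i$ for finitely many convex sets whose relative interiors share a point, and that is exactly the claim. So the work is to show $\bigcap_i \relint P_i \neq \emptyset$, and I expect this to be the main obstacle.

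It suffices to prove $\FMset \cap \relint P_i \neq\emptyset$ for each $i$ separately. Granting this, choose $\theta_i \in \FMset\cap\relint P_i$ for each $i$, and let $\bar\theta$ be their barycentre. Each $\theta_j$ lies in $P_i$, and $\theta_i \in \relint P_i$. A convex combination of points of a convex set with positive weight on a relative interior point is itself a relative interior point. Hence $\bar\theta\in\relint P_i$ for every $i$.

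For the single-index claim, argue by contradiction. Suppose $\FMset\cap\relint P_i=\emptyset$. Then $\FMset$ is a convex set contained in the polytope $P_i$ and disjoint from its relative interior. By \cref{cor:convex_disjoint_from_relint}, $\FMset$ lies in a proper face of $P_i$. That face has the form $X_i - d_iG$ with $G\subsetneq F_i$ a face of $B$. Note that $d_i>0$ because $X_i\notin\FMset$, so this rescaling is legitimate. Replacing $F_i$ by $G$ leaves the intersection equal to $\FMset$, so the modified tuple is still in $S_F$. This contradicts the leastness of $(F_1,\dots,F_n)$ from \cref{lem:unique_least_face_seq}.

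\textbf{Step 2: translating to the face fan.} For $\theta\in\FMset$ we have $\|X_i-\theta\|_B=d_i$, by \cref{cor:FMisPolytope}. Then $\theta\in\relint(X_i-d_iF_i)$ is equivalent to $(X_i-\theta)/d_i\in\relint F_i$. This in turn is equivalent to $X_i-\theta\in\relint(\R_+F_i)$, since relint of the cone over a proper face consists of positive multiples of $\relint F_i$.

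\textbf{Step 3: the face type characterisation.} For the forward direction, Step 1 supplies $\theta\in\relint\FMset=\bigcap_i\relint P_i$, and Step 2 converts this into $X_i-\theta\in\relint(\R_+F_i)$ for all $i$.

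For the converse, suppose some $\theta\in\relint\FMset$ satisfies $X_i-\theta\in\relint(\R_+F_i)$ for all $i$. Let $(G_1,\dots,G_n)$ be the actual face type. By Step 1 and Step 2, there is a point $\theta'\in\relint \FMset$ with $X_i-\theta'\in\relint\R_+G_i$.

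The face fan partitions $\R^k\setminus\{\vec 0\}$ by active constraint sets, so it remains to show that the active set $\A(X_i-\theta)$ is constant over $\theta\in\relint\FMset$. Since $\FMset\subseteq X_i-d_iG_i$, every constraint in $\A(G_i)$ is active throughout $\FMset$. Conversely, a constraint $\vec a\notin\A(G_i)$ is inactive at $\theta'$. The function $\vec a^\T(X_i-\cdot)$ is affine, bounded above by $d_i$ on $\FMset$, and strictly below $d_i$ at the relative interior point $\theta'$. An affine function that attains its maximum over a convex set at a relative interior point is constant on that set. If $\vec a$ were active at some point of $\relint \FMset$, it would attain its maximum $d_i$ there and so equal $d_i$ everywhere, contradicting its value at $\theta'$. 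Hence $\vec a$ is inactive at every point of $\relint\FMset$.

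Thus $\A(X_i-\theta)=\A(G_i)$, so $F_i=G_i$ and the sample has face type $(F_1,\dots,F_n)$.
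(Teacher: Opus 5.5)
Your proof is correct and is essentially the paper's. Minimality of the face type combined with \cref{cor:convex_disjoint_from_relint} shows each $\relint P_i$ meets $\FMset$, Rockafellar's Theorem 6.5 gives the identity (your barycentre step is a fine substitute for the paper's pairwise application of Theorem 6.5 to $\FMset$ and $P_i$), and the face-fan partition gives the characterisation.

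The affine-function argument in your converse is valid but unnecessary. Step 1 applied to the actual face type $(G_1,\dots,G_n)$ already puts every point of $\relint\FMset$, including the given $\theta$, in $\bigcap_i\relint(X_i-d_iG_i)$. Step 2 then gives $X_i-\theta\in\relint(\R_+G_i)\cap\relint(\R_+F_i)$, and the partition forces $F_i=G_i$ directly, exactly as the paper concludes.
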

	\begin{proof}
		By definition, $\FMset \subseteq (X_1 - d_1 F_1)$.     Suppose that $\relint \FMset$ does not intersect $\relint (X_1 - d_1 F_1)$, so $\relint \FMset \subseteq (X_1 - d_1 \partial F_1)$. By Corollary \ref{cor:convex_disjoint_from_relint}, there is some face $F_1'$ of $F_1$ satisfying $\FMset \subseteq X_1 - d_1 F_1'$. Hence $(F_1', F_2, \dots, F_n) \in S_F(X_1, \dots,X_n)$, contradicting the definition of $(F_1, \dots, F_n)$. Therefore (for any $i$, by symmetry) $\relint \FMset$ intersects $\relint (X_i - d_i F_i)$, and by \cite[Theorem 6.5]{rockafellar1970convex}:
		\begin{align*}
			\relint (\FMset \cap (X_i - d_i F_i)) &= \relint \FMset \cap \relint (X_i - d_i F_i), \\
			\relint \FMset &\subseteq \relint (X_i - d_i F_i).
		\end{align*}
		Applying \cite[Theorem 6.5]{rockafellar1970convex} again to $X_1 - d_1 F_1, \dots, X_n - d_n F_n$:
		\begin{align*}
			\relint \FMset = \textstyle \bigcap_{i = 1}^n \relint (X_i - d_i F_i).
		\end{align*}
		Suppose our sample has face type $F_1, \dots, F_n$. Then for all $\theta \in \relint \FMset$, we have $\theta \in \relint (X_i - d_i F_i)$ and therefore $X_i - \theta \in \relint \R_+ F_i$.
		
		To show the converse, consider $\theta \in \relint \FMset$ such that for all $i$, $X_i - \theta \in \relint (\R_+ F_i)$. We define $d_i = d_B(X_i, \theta)$. Assume that $(\sample)$ has the face type $F_1', \dots, F_n'$. Then by the argument above, $\theta \in \relint(X_i - d_i F_i')$, implying that $X_i - \theta \in d_i \relint F_i' \subseteq \relint (\R_+ F_i')$. The fans $\{ \relint (\R_+F) : F \in \mathscr F (B) \}$ partition $\R^k \setminus \{ \vec 0 \}$, so it must be the case that $F_i' = F_i$ and our sample has type $(F_1, \dots, F_n)$. 
	\end{proof}
	
	As the relative interior of the Fr\'echet mean set is the intersection of the relative interiors of the faces $X_i - d_i F_i$, we can deduce the dimension of $\FMset$ from the face type of the sample.
	
	\begin{lemma}\label{lemma:affine_intersection}
		If $W_1,\ldots,W_m$ are relatively open and $\textstyle \bigcap_{i=1}^m W_i \neq \emptyset$, then
		$$
		\textstyle\bigcap_{i=1}^m\mathrm{aff}(W_i) = \mathrm{aff} \left( \textstyle\bigcap_{i=1}^m W_i \right).
		$$
	\end{lemma}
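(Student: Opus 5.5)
We prove the two inclusions separately; only the inclusion of $\bigcap_i \aff(W_i)$ in $\aff\left(\bigcap_i W_i\right)$ needs work. Write $W = \bigcap_{i=1}^m W_i$. Since $W \subseteq W_i$ for every $i$, monotonicity of the affine hull gives $\aff(W) \subseteq \aff(W_i)$ for each $i$, and hence $\aff(W) \subseteq \bigcap_i \aff(W_i)$. This direction needs neither relative openness nor the nonemptiness hypothesis.

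For the reverse inclusion we use the hypothesis that $W$ is nonempty. Fix a point $\vec w \in W$. Each $W_i$ is relatively open, meaning open in $\aff(W_i)$, and contains $\vec w$. So there is $\epsilon_i > 0$ such that $B_{\epsilon_i}(\vec w) \cap \aff(W_i) \subseteq W_i$. Let $\epsilon = \min_i \epsilon_i > 0$; this is positive because there are finitely many sets. Then
\[
B_\epsilon(\vec w) \cap \textstyle\bigcap_{i=1}^m \aff(W_i) \subseteq \textstyle\bigcap_{i=1}^m W_i = W .
\]

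Now take any $\vec y \in L \coloneqq \bigcap_i \aff(W_i)$. The set $L$ is an affine subspace containing $\vec w$. For small $t > 0$ the point $\vec z = \vec w + t(\vec y - \vec w)$ lies in $L \cap B_\epsilon(\vec w)$, so $\vec z \in W$. Hence $\vec y = \vec w + t^{-1}(\vec z - \vec w)$ is an affine combination of the points $\vec w, \vec z \in W$, and therefore $\vec y \in \aff(W)$. This gives $L \subseteq \aff(W)$ and completes the proof.

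The only delicate point is the finiteness used to take a common radius $\epsilon$. Note also that $W$ is itself open in $L$ by the display above, though we do not need this fact.
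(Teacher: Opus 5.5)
Your proof is correct and follows the paper's argument. The easy inclusion comes from monotonicity of the affine hull. For the reverse inclusion, you take a small step from a common point of $\bigcap_i W_i$ toward $\vec y$ and write $\vec y$ as an affine combination of two points of the intersection. The only difference is that you handle all $m$ sets at once with a common radius, whereas the paper reduces to $m=2$. That reduction tacitly uses that $W_1 \cap W_2$ is again relatively open, and your direct argument avoids needing this.
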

	\begin{proof}
		It suffices to consider the $m=2$ case. The space $\text{aff}(W_1) \cap \text{aff}(W_2)$ is affine and contains $W_1 \cap W_2$, so we get that $\text{aff}(W_1 \cap W_2) \subseteq \text{aff}(W_1) \cap \text{aff}(W_2)$.\\
		Now fix $\vec y \in \text{aff}(W_1) \cap \text{aff}(W_2)$, $\vec x \in W_1 \cap W_2$. As $W_1,W_2$ are relatively open, there exists some $\epsilon > 0$ such that $(1-\epsilon)\vec x + \epsilon \vec y \in W_1 \cap W_2$. Hence $\vec y \in \text{aff}(\{ \vec x, (1-\epsilon)\vec x + \epsilon \vec y \}) \subseteq \text{aff}(W_1 \cap W_2)$.     
	\end{proof}
	
	\begin{remark}\label{rmk:positive_dist_to_data}
		The face type of a sample is undefined if and only if some data point $X_i$ is a Fr\'echet mean of the full dataset. \Cref{lemma:FM_dists} tells us the Fr\'echet mean is the unique point $X_i$ in this case, while \Cref{lemma:data_is_not_FM} tells us this is a probability zero event. For the rest of this section we exclude this case, assuming that $d_i = d_B(X_i, \FMset) > 0$.
	\end{remark}
	
	\subsection{Type Events}
	
	By \Cref{lemma:affine_intersection}, we can deduce the dimension of a Fr\'echet mean set from the face type of a sample. We must now identify which face types occur with positive probability, and identify which face types correspond to a unique Fr\'echet mean. To determine which face types occur with positive probability, we define the (lifted) type event. Through this section, we use $C_i$ and $G_i$ to denote $\R_+ F_i$ and $F_i^{\Diamond}$ respectively.
	
	\begin{definition}[(Lifted) Type Event]
		Consider the random vector $(\sample) \in \R^{nk}$. In this space, the \emph{type event} $U_{\CType}$ is given by:
		\[
		U_{F_1, \dots, F_n} = \{ (\sample) \in \R^{nk}: (\sample) \text{ has face type }(F_1, \dots F_n) \}.
		\]
		Then we define the \emph{lifted type event} as the set
		\[
		U_{F_1, \dots, F_n, \Theta} = \{ (\sample, \theta) \in \R^{k(n+1)} : \theta \in \relint \FMset, 
		\; X_i - \theta \in \relint C_i \},
		\]
		where $C_i$ denotes the cone $\R_+ F_i$.
	\end{definition}
	
	Note that by \Cref{lemma:geometry_comb_type}, the type event is the image of the lifted type event under the projection $\proj: \R^{k(n+1)} \rightarrow \R^{kn}$ given by $(\sample, \theta) \mapsto (\sample)$.
	
	Our goal is to classify the face types $(\CType)$ for which $U_{\CType}$ has positive Lebesgue measure. Then as $(\sample)$ has a positive density, such a face type occurs with positive probability. It is difficult to calculate the measure of the type events directly, so we will translate this condition into a linear relation on subgradients. For this, we define the \emph{zero subgradient event}.
	
	\begin{definition}[Zero Subgradient Event]
		The zero subgradient event is given by:
		\[
		V_{\CType, \Theta} = \left\{ (\sample, \theta) \in \R^{k(n+1)} : \vec 0 \in \relint \left( \textstyle \sum_{i=1}^n \|X_i-\theta\| G_i \right),\; X_i - \theta \in \relint C_i \right\},
		\]
		where $G_i$ denotes the polar face $F_i^{\Diamond}$, and $C_i$ denotes the cone $\R_+F_i$.
	\end{definition}
	
	Note that when we have $X_i - \theta \in \relint C_i$ for all $i$, the subgradient of the Fr\'echet function is given by $\partial f(\theta) = \sum_{i=1}^n \| X_i - \theta \|G_i$. Hence, by the definition of subgradients, $\theta \in \FMset$ if and only if $\vec 0 \in \sum_{i=1}^n \| X_i - \theta \|G_i$ (\cref{rmk:subgradient}). From this, we might hope that the lifted type event and the zero subgradient event are equivalent, but this is not true. However, they do coincide almost everywhere.
	
	\begin{restatable}{proposition}{EventEquivalence}\label{prop:lifted_type_event}
		The zero subgradient event $V_{\CType, \Theta}$ is a relatively open polyhedron; it is an intersection of finitely many hyperplanes and open half spaces. Furthermore, it coincides with the lifted type event everywhere other than on its boundary:
		\begin{align*}
			\relint V_{\CType, \Theta} &= \relint U_{\CType, \Theta},\\
			\closure V_{\CType, \Theta} &= \closure U_{\CType, \Theta}.
		\end{align*}
	\end{restatable}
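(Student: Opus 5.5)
The plan is to prove the two halves separately: first that $V_{\CType,\Theta}$ is a relatively open polyhedron by writing it out as explicit linear constraints, and then that $V \subseteq U \subseteq \closure V$ with $U=U_{\CType,\Theta}$ and $V=V_{\CType,\Theta}$.

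\textbf{$V$ is a relatively open polyhedron.} The key observation is that on the set $\{X_i-\theta\in\relint C_i\}$ the norm is linear. For any fixed $\vec g_i\in G_i$ we have $\|X_i-\theta\|_B=\vec g_i^\T(X_i-\theta)$, since every active constraint of $X_i-\theta$ lies in $G_i$. Write $\lambda_i(X,\theta)=\vec g_i^\T(X_i-\theta)>0$ for this linear function.

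The condition $X_i-\theta\in\relint C_i$ is itself a finite list of constraints on the point $(X,\theta)$:
\begin{itemize}
\item the linear equalities $\vec a^\T(X_i-\theta)=\vec a'^\T(X_i-\theta)$ for $\vec a,\vec a'\in\A(F_i)$;
\item the strict inequalities $\vec a^\T(X_i-\theta)>\vec b^\T(X_i-\theta)$ for $\vec a\in\A(F_i)$ and $\vec b\notin\A(F_i)$.
\end{itemize}

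For the subgradient condition, I use the standard fact that for positive weights $\lambda_i$ the Minkowski sum $P_\lambda=\sum_i\lambda_iG_i$ has a normal fan independent of $\lambda$ (the common refinement of the normal fans of the $G_i$). Hence its facet normals $\vec c_1,\dots,\vec c_m$ (relative to its affine hull) do not depend on $\lambda$. Its affine hull is $\sum_i\lambda_i\vec g_i^0+L$, where $\vec g_i^0\in G_i$ are fixed and $L=\sum_i\mathrm{lin}(G_i-G_i)$ is a fixed linear space. So $\vec 0\in\relint P_\lambda$ is equivalent to:
\begin{itemize}
\item the linear equations $\sum_i\lambda_i\vec g_i^0\in L$;
\item the strict inequalities $0<\sum_i\lambda_i h_{G_i}(\vec c_j)$ for each facet normal $\vec c_j$, where $h_{G_i}$ is the support function of $G_i$.
\end{itemize}
Because the $\lambda_i$ are linear in $(X,\theta)$, $V$ is cut out by finitely many hyperplanes and open half-spaces. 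This gives the first claim, and also $\relint V=V$ and $\relint\closure V=V$.

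\textbf{$V\subseteq U$.} Fix $(X,\theta)\in V$. Since $\partial f(\theta)=2\sum_i\|X_i-\theta\|G_i$, we get $\vec 0\in\partial f(\theta)$, so $\theta\in\FMset$. It remains to show $\theta\in\relint\FMset$; the face type then follows from \cref{lemma:geometry_comb_type}. I would compute the tangent directions of $\FMset$ at $\theta$ via directional derivatives. Because $\vec 0\in\relint\partial f(\theta)$, the derivative $\partial_{\vec t}f(\theta)=\sup_{\vec v\in\partial f(\theta)}\vec v^\T\vec t$ vanishes exactly when $\vec t\perp L'$, where $L'$ is the linear span of $\partial f(\theta)$, and is strictly positive otherwise. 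So $\FMset\subseteq\theta+L'^\perp$ near $\theta$. Conversely, for $\vec t\perp L'$ each $\vec t$ is orthogonal to $G_i-G_i$, so every active constraint of $X_i-\theta$ changes at the same rate $s_i=\vec g_i^\T\vec t$. Since the strict inequalities are open, $X_i-\theta-h\vec t$ stays in $\relint C_i$ for small $h$, and $f(\theta+h\vec t)=\tfrac1n\sum_i(d_i-hs_i)^2$. By the first-order condition and minimality this forces every $s_i=0$. Hence $f$ is constant on a neighbourhood of $\theta$ in $\theta+L'^\perp$, and $\theta\in\relint\FMset$.

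\textbf{$U\subseteq\closure V$, and conclusion.} This is the step I expect to be the main obstacle. For $(X,\theta)\in U$ we know $\vec 0\in\partial f(\theta)$, but possibly only on its relative boundary. I would first try a perturbation: move one data point $X_1$ slightly along a direction $\vec u\in\relint C_1$, which keeps all the cone conditions valid. This rescales $\lambda_1$ and should push $\vec 0$ into the relative interior of the sum, producing points of $V$ arbitrarily close to $(X,\theta)$.

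If that fails, the fallback is an argument by contradiction. Suppose there is a supporting direction $\vec c$ with $h_{\partial f(\theta)}(\vec c)=0$ but $\vec c\not\perp L'$. Then $f$ is flat to first order along $\vec c$, and the second-order expansion $\sum_i s_i^2h^2$ shows either $f$ strictly increases along $\vec c$ or $\theta+h\vec c\in\FMset$. In the second case \cref{lemma:affine_intersection} together with $\theta\in\relint\FMset$ forces $\vec c$ into the direction space of $\bigcap_i\aff(X_i-d_iF_i)$, which is orthogonal to $L'$, a contradiction. In the first case the boundary configuration is a codimension-one degeneracy that can be perturbed away.

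Either way, this gives $V\subseteq U\subseteq\closure V$. Taking closures yields $\closure U=\closure V$. Since $V$ is relatively open and convex with $\relint\closure V=V$, the sandwich also gives $\relint U=V=\relint V$.
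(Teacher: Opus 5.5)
\textbf{Main gap: the step $U_{\CType,\Theta}\subseteq\closure V_{\CType,\Theta}$ is not proved.}

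Your first idea moves a single $X_1$ inside $\relint C_1$ with $\theta$ fixed. This changes only the weight $w_1=\|X_1-\theta\|_B$, and in general no single-weight change puts $\vec 0$ into $\relint\sum_i w_iG_i$. Here is a concrete example. Take the $\ell_1$ norm on $\R^2$, $\theta=\vec 0$, and
\[
X_1=(2,0),\quad X_2=(-1,1),\quad X_3=(1,1),\quad X_4=(-1,-1).
\]
Then every $d_i=2$, with $G_1=\{1\}\times[-1,1]$, $G_2=\{(-1,1)\}$, $G_3=\{(1,1)\}$ and $G_4=\{(-1,-1)\}$. This gives $\sum_i d_iG_i=\{0\}\times[0,4]$, so $\vec 0$ is an endpoint of the subgradient and the configuration is not in $V$. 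On the other hand, the metric-midpoint conditions force $\theta_1+\theta_2=0$ (from $X_3,X_4$) and $\theta_1=\theta_2$ (from $X_1,X_2$). Hence $\FMset=\{\vec 0\}$, and the configuration lies in $U\setminus V$.

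Every point of $\sum_i w_iG_i$ has first coordinate $w_1-w_2+w_3-w_4$. So changing any one weight pushes $\vec 0$ out of the affine hull. A coordinated change is needed, for example $w_1,w_4\mapsto 2+\epsilon$.

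Your fallback does not fill this gap. The case you dismiss as ``a codimension-one degeneracy that can be perturbed away'' is strict second-order increase along the supporting direction. That is exactly what happens in this example, and perturbing it away is the claim to be proved.

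Two ideas are missing:
\begin{enumerate}
\item Show that every point of $U$ satisfies $\vec 0\in\relint\conv\bigcup_iG_i$, i.e.\ $W\neq\emptyset$. The paper takes a properly separating $\vec v$ with $\vec g^\T\vec v\le 0$ on all $G_i$, strictly for some $\vec g$. From $\partial_{\vec v}f(\theta)\ge 0$ every term has zero $\vec v$-derivative, so $\theta+\lambda\vec v\in\FMset$. Then $\theta\in\relint\FMset$ gives $\partial_{-\vec v}f(\theta)=0$, which forces $\vec g^\T\vec v=0$ on all $G_i$, a contradiction.
\item Construct the perturbation. Write $\vec 0=\sum_i\lambda_i\vec g_i'$ with $\vec g_i'\in\relint G_i$ (Rockafellar, Thm.~6.9). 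Interpolate with $\vec 0=\sum_iw_i\vec g_i$ to get weights $w_i'=(1-\delta)w_i+\delta\lambda_i$. Then move \emph{all} points radially at once: $Y_i=\theta+\tfrac{w_i'}{w_i}(X_i-\theta)$.
\end{enumerate}

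\textbf{Secondary error in the step $V\subseteq U$.}

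For $\vec t\perp L'$, the first-order condition only gives $\sum_i d_is_i=0$, and minimality is compatible with $\sum_i s_i^2>0$. For instance, with $k=1$, $X_1=1$, $X_2=-1$, $\theta=0$, you have $\partial f(0)=\{0\}$, so $L'^\perp=\R$, yet $f(h)=1+h^2$. So $\FMset$ is not locally $\theta+L'^\perp$.

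The repair is short. Let $T=(\bigcup_iG_i)^\perp$.
\begin{enumerate}
\item For any $\theta'\in\FMset$, convexity of $\FMset$ and your first claim give $\theta'-\theta\perp L'$.
\item Your local formula then forces all $s_i=0$, i.e.\ $\theta'-\theta\in T$.
\item Conversely, $\theta+T$ lies in $\FMset$ near $\theta$, because the active sets and distances are unchanged there.
\end{enumerate}
Hence $\theta\in\relint\FMset$ still follows. The paper reaches this instead by contradiction, using the max-of-quadratics form of $f$.

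Your polyhedrality argument, via the $\lambda$-independent normal fan of $\sum_i\lambda_iG_i$ and linear support-function inequalities, is correct. It is a valid alternative to the paper's description of $W$ as a linear image of a relatively open cone.
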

	
	The proof of this proposition can be found in \cref{appsec:prop_proof}.
	
	\subsection{Polytope Conditions for Unique Fr\'echet Means}
	
	We can now state and prove the main result of this section (\cref{thm:wwp_dim_condition}), which establishes the conditions for $U_{\CType}$ to occur with positive probability and to produce a unique Fr\'echet mean. We do this by computing the dimension of the polyhedron $V_{\CType,\Theta}$, which then allows us to compute the dimension of $\proj V_{\CType,\Theta}$ and $U_{\CType}$. This result is crucial for not only computing the sample threshold for uniqueness, but also for proving that the sample threshold for uniqueness is well-defined.
	
	\begin{theorem}\label{thm:wwp_dim_condition}
		The type event $U_{\CType}$ and $\proj V_{\CType,\Theta}$ satisfy
		$
		\L_{}(U_{\CType}) = \L(\proj V_{\CType,\Theta}).
		$ \\
		Therefore $U_{\CType}$ has positive probability if and only if
		\begin{gather}
			\vec 0 \in \relint \left( \conv \left(\textstyle \bigcup_{i \in [n]} G_i \right) \right) \label{eq:wpp_cond_possible}, \\
			\text{and} \qquad \textstyle \sum_{i \in [n]} \dim G_i = \dim \textstyle \sum_{i \in [n]} \aff G_i. \label{eq:wpp_cond_probable}
		\end{gather}
		The Fr\'echet mean of a sample in $U_{\CType}$ is unique if and only if 
		\begin{align}\label{eq:wpp_cond_unique}
			\dim \conv \left( \textstyle \bigcup_{i \in [n]} G_i \right) = k.
		\end{align}
	\end{theorem}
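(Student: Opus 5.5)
The plan has three parts: identify $U$ with the projection of $V$ up to null sets, compute the dimension of the polyhedron $V$ by a linear-algebra count, and read off uniqueness from the affine hull of $\FMset$.

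\textbf{Measure identity.} By \cref{lemma:geometry_comb_type}, $U_{\CType} = \proj U_{\CType,\Theta}$. \Cref{prop:lifted_type_event} says $U$ and $V$ (lifted) agree off the relative boundary of the polyhedron $V_{\CType,\Theta}$. The sets $\proj U$ and $\proj V$ therefore differ only inside $\proj(\closure V\setminus \relint V)$. This is a finite union of projections of lower-dimensional polyhedra. If $V$ projects onto a full-dimensional set of $\R^{nk}$, each such boundary projection has dimension at most $\dim V - 1$, so we need $\dim V - 1 < nk$. If instead $V$ projects to a null set, then so does its closure. Either way $\L(U) = \L(\proj V)$. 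The main obstacle is the boundary: a face of $V$ may still have dimension $\ge nk$ when the fibres of $\proj$ over $V$ are positive-dimensional. To handle this I will use that $\proj U$ is contained in $\proj$ of the set where $\theta \in \relint \FMset$, and that these fibres are exactly $\relint\FMset$. Then $\dim V = nk + \dim\FMset$ generically, and a boundary face projects to dimension at most $\dim V - 1 - (\text{fibre dimension}) < nk$ whenever the fibre dimension is preserved. I expect this fibre bookkeeping to need the most care.

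\textbf{Dimension of $V$.} Change coordinates to $(\vec y_i = X_i - \theta, \theta)$, a linear bijection, so $\theta$ is free. Write $\vec y_i = t_i \vec u_i$ with $t_i > 0$ and $\vec u_i \in \relint F_i$; this contributes $\dim F_i + 1$ parameters each. The constraint is $\vec 0 \in \relint \sum_i t_i G_i$. Choose $\vec g_i \in \relint G_i$, so the constraint reads $\sum_i t_i \vec g_i = \vec 0$ with each $\vec g_i$ varying in $\relint G_i$. Lifting by $\vec a\mapsto(\vec a,1)$ and using $\dim F_i + \dim G_i = k-1$, positivity of $V$'s projection amounts to the map $(t_i,\vec g_i)\mapsto \sum t_i \vec g_i$ hitting $\vec 0$ with a solution set of the expected codimension. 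Hitting $\vec 0$ with positive $t_i$ and relatively interior points is exactly \eqref{eq:wpp_cond_possible}, i.e.\ $\vec 0 \in \relint\conv\bigcup G_i$. The generic count of constraints is $\dim \sum_i \aff G_i$ against $\sum_i \dim G_i$ free polar directions. So $\dim \proj V = nk$ iff the affine spans add independently, which is \eqref{eq:wpp_cond_probable}. I would verify this by computing the dimension of the solution set $\{(t,\vec g): \sum t_i\vec g_i=\vec 0\}$ near an interior solution via its tangent map (a Jacobian rank computation), then adding the $\vec u_i$ and $\theta$ coordinates and subtracting the fibre dimension.

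\textbf{Uniqueness.} By \cref{lemma:geometry_comb_type,lemma:affine_intersection}, $\aff\FMset = \bigcap_i \aff(X_i - d_iF_i)$. Its direction space is $\bigcap_i \{\vec v : \vec a^\T\vec v = \vec b^\T \vec v \ \forall\, \vec a,\vec b\in G_i\}$, intersected with the condition coming from the fixed distances $d_i$. On a face, $\vec a^\T \vec y_i$ is constant, so moving $\theta$ along $\vec v$ keeps every active constraint equal, i.e.\ $\vec a^\T\vec v$ is the same for all $\vec a \in G_i$. Distances are constant on $\FMset$ by \cref{cor:FMisPolytope}, which forces that common value to be $0$. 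Hence the direction space is $(\bigcup_i G_i)^\perp$. Since $\vec 0\in\conv\bigcup G_i$, the linear span of $\bigcup_i G_i$ equals the affine hull of $\conv\bigcup_i G_i$. So $\FMset$ is a point iff that hull has dimension $k$, which is \eqref{eq:wpp_cond_unique}.
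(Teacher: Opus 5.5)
\textbf{Gap: the measure identity.} You try to show that $\proj(\closure V_{\CType,\Theta}\setminus V_{\CType,\Theta})$ is Lebesgue-null when $\proj V_{\CType,\Theta}$ is full-dimensional. That is false whenever $d=\dim\FMset\geq 1$, and $d\geq 1$ is exactly the non-uniqueness case the theorem is mostly about.

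Take any $X\in\proj V_{\CType,\Theta}$. Then $X$ has face type $(\CType)$. On $\relint\FMset(X)$ the set $\sum_i\|X_i-\theta\|_B G_i=\sum_i d_iG_i$ does not change, so the fibre $\{\theta:(X,\theta)\in V_{\CType,\Theta}\}$ is all of $\relint\FMset(X)$. Its relative-boundary points $\theta$ therefore give $(X,\theta)\in\closure V_{\CType,\Theta}\setminus V_{\CType,\Theta}$. Hence $\proj(\closure V_{\CType,\Theta}\setminus V_{\CType,\Theta})\supseteq\proj V_{\CType,\Theta}$. The boundary faces carrying these points are precisely the ones where the fibre dimension drops. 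So the bound ``$\dim V-1-(\text{fibre dimension})<nk$ whenever the fibre dimension is preserved'' fails exactly where you need it.

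The missing observation is that you never need the projected boundary to be null. \Cref{prop:lifted_type_event} gives $V_{\CType,\Theta}\subseteq U_{\CType,\Theta}\subseteq\closure V_{\CType,\Theta}$, hence
\[
\proj V_{\CType,\Theta}\subseteq U_{\CType}\subseteq\proj\big(\closure V_{\CType,\Theta}\big)\subseteq\closure\big(\proj V_{\CType,\Theta}\big).
\]
Since $\proj V_{\CType,\Theta}$ is a convex (relatively open polyhedral) set, the difference lies in its relative boundary, which is Lebesgue-null. This is the paper's \cref{lemma:probable}, and no fibre bookkeeping is needed at that stage.

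\textbf{Dimension count: a valid different route.} The paper stays linear throughout:
\begin{itemize}
\item It writes $\aff V_{\CType,\Theta}$ as the preimage of $(\aff W,\vec 0,\dots,\vec 0)$ under the full-rank matrix $M$, giving $\dim V_{\CType,\Theta}$.
\item It computes $\dim W$ from the Cayley polytope $P_{G_1,\dots,G_n}$, using Balas' projection theorem twice.
\item It subtracts $d$ with Balas once more.
\end{itemize}
Your factorisation $V_{\CType,\Theta}\cong\R^k\times W\times\prod_i\relint F_i$, via $X_i-\theta=t_i\vec u_i$ and $\dim F_i=k-1-\dim G_i$, gets the same $\dim V_{\CType,\Theta}$ more transparently. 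A constant-rank argument for $\Phi(t,\vec g)=\sum_it_i\vec g_i$ can then replace the Cayley polytope, at the cost of nonlinear dimension theory instead of plain linear algebra.

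However, your ``generic count'' sentence puts the terms in the wrong places. With all $t_i>0$, the differential of $\Phi$ has image $\text{span}\bigcup_iG_i$. So the zero set has codimension $\dim\conv\bigcup_iG_i$ (using $\vec 0\in\conv\bigcup_iG_i$), not $\dim\sum_i\aff G_i$. The quantity $\dim\sum_i\aff G_i$ instead enters as the rank of $\vec g\mapsto\sum_it_i\vec g_i$ for fixed $t$. That makes the fibres over $W$ of dimension $\sum_i\dim G_i-\dim\sum_i\aff G_i$, and gives
\[
\dim W=n+\dim\textstyle\sum_i\aff G_i-\dim\conv\bigcup_iG_i .
\]
The term $\dim\conv\bigcup_iG_i$ then cancels against the fibre dimension $d=k-\dim\conv\bigcup_iG_i$ of $\proj$ on $V_{\CType,\Theta}$. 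That fibre is full in $\FMset(X)$ by the observation above. The result is $\dim\proj V_{\CType,\Theta}=nk-\sum_i\dim G_i+\dim\sum_i\aff G_i$, as required.

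Your uniqueness argument is essentially the paper's \cref{lemma:unique}.
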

	
	To understand this result, we return to one of our examples from \cref{fig:FM_sets}, displayed again in \cref{fig:Face_Type_Example}, which shows a sample with face type $(F_1,F_2,F_3) = (\vec e_1+\vec e_2, \conv \{ -\vec e_2 \pm \vec e_1 \}, \conv \{ -\vec e_1 \pm \vec e_2 \})$. We can test whether this face type satisfies the three conditions in \cref{thm:wwp_dim_condition}; for this face type, note that $(G_1, G_2, G_3) = (\conv \{\vec e_1, \vec e_2 \}, -\vec e_2, -\vec e_1)$. \\
	The first condition \cref{eq:wpp_cond_possible} verifies whether a sample can have the face type $(F_1,F_2, F_3)$. This is shown to be true in \cref{fig:Face_Type_Example}, and in fact $\conv \left(\textstyle \bigcup_{i \in [n]} G_i \right) = \conv \{\pm\vec e_1,\pm\vec e_2 \}$ which contains $\vec 0$ in its relative interior.\\
	The second condition \cref{eq:wpp_cond_probable} verifies whether a sample has face type $(F_1,F_2,F_3)$ with positive probability. The dimensions of $G_1,G_2,G_3$ are 1,0,0 respectively, while $\sum_{i \leq 3} \aff G_i = \aff \{ -\vec e_1, -\vec e_2 \}$, which has dimension $1=1+0+0$. This verifies that such a face type can occur with positive probability; this configuration of data points around a Fr\'echet mean is generic.\\
	Finally, the third condition \cref{eq:wpp_cond_unique} verifies whether having face type $(\CType)$ implies that our sample has a unique Fr\'echet mean. We know that $\conv \left(\textstyle \bigcup_{i \in [n]} G_i \right) = \conv \{\pm\vec e_1,\pm\vec e_2 \}$ which has dimension 2, and therefore any sample with this face type will have unique Fr\'echet mean. \\
	
	\begin{figure}
		\centering
		\begin{tikzpicture}[scale=0.9]
			\draw (0,0) -- (-4, 0) -- (-4,-4) -- (0,-4) -- (0,0);
			\draw (-0.5,0) -- (1.5, 0) -- (1.5,2) -- (-0.5,2) -- (-0.5,0);
			\draw (0,0.5) -- (0,-1.5) -- (2,-1.5) -- (2,0.5) -- (0,0.5);
			\filldraw (-2,-2) circle (1pt) node[anchor=north east] {\small$\vec x_1 = (-4,-4)$};
			\filldraw (0.5,1) circle (1pt)  node[anchor=south] {\small$\vec x_2 = (1,2)$};
			\filldraw (1,-0.5) circle (1pt)  node[anchor=north west] {\small$\vec x_3 = (2,-1)$};
			\filldraw[red] (0,0) circle (3pt);
		\end{tikzpicture}  
		\caption{A sample with face type $(\CType) = (\vec e_1+\vec e_2, \conv \{ - \vec e_2 \pm \vec e_1 \}, \conv \{ - \vec e_1 \pm \vec e_2 \})$, and its unique Fr\'echet mean highlighted in red.}
		\label{fig:Face_Type_Example}
	\end{figure}
	
	We prove \cref{thm:wwp_dim_condition} via the following three lemmas.
	
	\begin{lemma}\label{lemma:possible}
		The zero subgradient event $V_{\CType,\Theta}$ is non-empty if and only if $\vec 0 \in \relint \left( \conv \left(\textstyle \bigcup_{i \in [n]} G_i \right) \right)$.
	\end{lemma}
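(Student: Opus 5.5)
The plan is to reduce both directions to one description of the relative interior of $\conv\left(\bigcup_{i} G_i\right)$, in terms of strictly positive combinations of relative interior points of the $G_i$. Two standard facts from \cite{rockafellar1970convex} do the work.

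\emph{Minkowski sums.} For $\lambda_i>0$ we have $\relint\left(\sum_i \lambda_i G_i\right) = \sum_i \lambda_i \relint G_i$, by Corollary 6.6.2 of \cite{rockafellar1970convex}, since $\relint$ commutes with positive scaling and with Minkowski sums of convex sets.

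\emph{Convex hulls of unions.} By Theorem 6.9 of \cite{rockafellar1970convex}, applied to the nonempty convex sets $G_1,\dots,G_n$,
\[
\relint \conv\left(\textstyle\bigcup_{i} G_i\right) = \left\{ \textstyle\sum_i \mu_i \vec g_i : \mu_i>0,\ \textstyle\sum_i \mu_i = 1,\ \vec g_i \in \relint G_i \right\}.
\]
Together these say that $\vec 0 \in \relint\conv\left(\bigcup_i G_i\right)$ if and only if there are weights $\mu_i>0$ with $\vec 0 \in \sum_i \mu_i \relint G_i = \relint\left(\sum_i \mu_i G_i\right)$. Both conditions are invariant under rescaling all the $\mu_i$ by a common positive factor, so the normalisation $\sum_i\mu_i=1$ can be dropped.

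I also need a description of the cones. Since each $F_i$ is a proper face of $B$, we have $\vec 0\notin \aff F_i$, so $\relint C_i = \{ t\vec y : t>0,\ \vec y\in\relint F_i\}$. Moreover $\|\vec y\|_B = 1$ for every $\vec y\in F_i$. Hence any $\vec x \in \relint C_i$ is nonzero and satisfies $\vec x/\|\vec x\|_B \in \relint F_i$.

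\emph{Forward direction.} Take $(X_1,\dots,X_n,\theta)\in V_{\CType,\Theta}$. Since $X_i - \theta \in \relint C_i$, each $\lambda_i \coloneqq \|X_i-\theta\|_B$ is strictly positive. By the Minkowski-sum fact, $\vec 0\in\relint\left(\sum_i\lambda_i G_i\right) = \sum_i \lambda_i\relint G_i$. Dividing by $\sum_i\lambda_i$ exhibits $\vec 0$ as a strictly positive convex combination of points $\vec g_i\in\relint G_i$. The convex-hull fact then gives $\vec 0\in\relint\conv\left(\bigcup_i G_i\right)$.

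\emph{Converse.} Write $\vec 0=\sum_i\mu_i\vec g_i$ with $\mu_i>0$ and $\vec g_i\in\relint G_i$. Set $\theta=\vec 0$, pick any $\vec y_i\in\relint F_i$, and put $X_i=\mu_i\vec y_i$. Then $X_i-\theta\in\relint C_i$ and $\|X_i-\theta\|_B=\mu_i$. Hence $\vec 0\in \sum_i\mu_i\relint G_i = \relint\left(\sum_i \|X_i-\theta\|_B\, G_i\right)$, so $(X_1,\dots,X_n,\vec 0)\in V_{\CType,\Theta}$.

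The main point needing care is the characterisation of $\relint\conv\left(\bigcup_i G_i\right)$: I will cite Theorem 6.9 of \cite{rockafellar1970convex} rather than reprove it. The rest is bookkeeping with relative interiors of cones and sums.
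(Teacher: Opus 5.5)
Your proof is correct and follows essentially the paper's route. The paper passes through the weight cone $W=\{\vec w\in\R^n_+ : \vec 0\in\sum_i w_i\relint G_i\}$: it shows $V_{\CType,\Theta}\neq\emptyset$ iff $W\neq\emptyset$, using the same converse construction $X_i\in\theta+w_i\relint F_i$, and then applies Theorem 6.9 of \cite{rockafellar1970convex}. You keep $W$ implicit and state the identity $\relint\bigl(\sum_i\lambda_i G_i\bigr)=\sum_i\lambda_i\relint G_i$ explicitly via Corollary 6.6.2, whereas the paper uses this identity only tacitly when equating the condition in $V_{\CType,\Theta}$ with membership in $W$.
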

	
	\begin{proof}
		First, we have shown in the proof of \cref{prop:lifted_type_event} that
		\begin{equation}
			V_{\CType, \Theta} = L_{\CType}^{-1}(W) \textstyle \bigcap \bigcap_{i \in [n]} \{ (\sample, \theta) : X_i - \theta \in \relint C_i \},\label{eq:zero_subgrad_event_as_intersection}
		\end{equation}
		where $W$ is the relatively open polyhedral cone $\{\vec w \in \R^n_{+} : \vec 0 \in \sum_{i \in [n]} w_i \relint G_i\}$ (\cref{lemma:polyhedral_cone_W}). 
		
		We will show that $V_{\CType,\Theta}$ is not empty if and only if $W$ is non-empty. Suppose $(X_1,\dots, X_n,\theta) \in V_{\CType,\Theta}$; then by \cref{eq:zero_subgrad_event_as_intersection}, $W$ is non-empty. Conversely, suppose $\vec w \in W$. Fix an arbitrary $\theta$ and pick some $X_i \in \theta + w_i \relint F_i$. By construction, $X_i - \theta \in \relint C_i$ and $\| X_i - \theta \|_B = w_i$. By the definition of $W$ we have $
		\vec 0 \in \relint \left( \textstyle \sum_{i \in [n]} \| X_i - \theta \|_B G_i \right),
		$ so $(\sample,\theta) \in V_{\CType,\Theta}$.
		
		Next, note that \cite[Theorem 6.9]{rockafellar1970convex} states that for non-empty convex $G_1, \dots, G_n \subset \R^k$, then
		\begin{equation}\label{eq:rockafellar_thm6.9}
			\relint \left( \conv \left( \textstyle \bigcup_{i \in [n]} G_i \right) \right) = \bigcup \left \{ \textstyle \sum_{i \in [n]} \lambda_i \relint G_i : \lambda_i > 0, \sum_{i \in [n]} \lambda_i = 1 \right \}.
		\end{equation}
		From this we see that $\vec 0 \in \relint \left( \conv \left(\textstyle \bigcup_{i \in [n]} G_i \right) \right)$ if and only if $W$ is non-empty. We conclude that $V_{\CType, \Theta}$ and $V_{\CType}$ are non-empty if and only if $\vec 0 \in \relint \left( \conv \left(\textstyle \bigcup_{i \in [n]} G_i \right) \right)$.
	\end{proof}
	
	\begin{lemma}\label{lemma:probable}
		The type event $U_{\CType}$ has positive probability if and only if the polyhedron $\proj V_{\CType,\Theta}$ has dimension $nk$.
	\end{lemma}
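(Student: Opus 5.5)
The plan is to reduce the probability statement to a Lebesgue-measure statement about $U_{\CType}$, and then to use the fact that $\proj V_{\CType,\Theta}$ is a polyhedron.

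Since $(\sample)$ has a positive density on $\R^{nk}$ (we assume $\esssupp(\mb P)=\R^k$ with a density), the event $U_{\CType}$ has positive probability if and only if $\L(U_{\CType})>0$. We take as given the first claim of \cref{thm:wwp_dim_condition}, that $\L(U_{\CType}) = \L(\proj V_{\CType,\Theta})$. If it is not yet available, it comes from \cref{prop:lifted_type_event} as follows. $U_{\CType} = \proj U_{\CType,\Theta}$, and $U_{\CType,\Theta}$ agrees with $V_{\CType,\Theta}$ up to their common relative boundary. Hence $U_{\CType}$ and $\proj V_{\CType,\Theta}$ both lie between $\proj \relint V_{\CType,\Theta}$ and $\proj \closure V_{\CType,\Theta}$. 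These two sets differ only by a subset of the relative boundary of the convex set $\proj \closure V_{\CType,\Theta}$ (projection commutes with relint for convex sets, Rockafellar Theorem 6.6), and that boundary is Lebesgue null. So the measures agree. It therefore suffices to show that $\L(\proj V_{\CType,\Theta})>0$ if and only if $\dim \proj V_{\CType,\Theta} = nk$.

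By \cref{prop:lifted_type_event}, $V_{\CType,\Theta}$ is a relatively open polyhedron: a finite intersection of hyperplanes and open half-spaces, hence convex. The linear image $\proj V_{\CType,\Theta}$ is then a convex set. By Fourier--Motzkin elimination, or simply by convexity, its affine hull has a well-defined dimension. A convex subset of $\R^{nk}$ has positive Lebesgue measure exactly when it has nonempty interior, that is, when its affine hull is all of $\R^{nk}$. If its dimension is less than $nk$, it lies in a proper affine subspace, which is null. If its dimension equals $nk$, it contains an open ball (the relative interior of a full-dimensional convex set is open and nonempty), so it has positive measure. Combined with the equality of measures, this gives the lemma.

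The main obstacle is the null-boundary step when it is needed: we must check that the set difference between $U_{\CType}$ and $\proj V_{\CType,\Theta}$ is contained in $\closure(\proj V)\setminus \relint(\proj V)$. I would verify this via $\proj(\relint V) = \relint(\proj V)$ and $\proj(\closure V)\subseteq \closure(\proj V)$. The boundary of a convex set in $\R^{nk}$ is Lebesgue null, since convex sets are Jordan measurable.
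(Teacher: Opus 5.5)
Your proof is correct and follows the paper's argument: \cref{prop:lifted_type_event} together with Rockafellar's Theorem~6.6 gives $\proj(\relint V_{\CType,\Theta}) = \relint(\proj V_{\CType,\Theta}) \subseteq U_{\CType} \subseteq \closure(\proj V_{\CType,\Theta})$, so $U_{\CType}$ and $\proj V_{\CType,\Theta}$ differ only by a Lebesgue-null boundary, and a convex (polyhedral) set has positive measure exactly when it is full-dimensional. The one point you leave implicit, which the paper states, is that this sandwich and the completeness of Lebesgue measure are what make $U_{\CType}$ measurable.
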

	
	\begin{proof}
		The zero subgradient event $V_{\CType,\Theta}$ is Borel measurable as it is a relatively open polyhedron. Consequently, $\proj V_{\CType,\Theta}$ is also a relatively open polyhedron and so also Borel measurable.
		We also have by Proposition \ref{prop:lifted_type_event} and Theorem 6.6 of \cite{rockafellar1970convex}:
		\begin{align*}
			\relint U_{\CType} &= \relint (\proj U_{\CType,\Theta})\\
			&= \proj (\relint U_{\CType,\Theta}) \\
			&= \proj (\relint V_{\CType,\Theta}) \\
			&= \relint (\proj V_{\CType,\Theta}),\\
			U_{\CType} &= \proj U_{\CType,\Theta} \\
			&\subseteq \proj (\closure U_{\CType,\Theta}) \\
			&= \proj (\closure V_{\CType,\Theta}) \\
			&\subseteq \closure (\proj V_{\CType,\Theta}).
		\end{align*}
		Hence $\proj V_{\CType,\Theta}$ is a measurable set which differs from $U_{\CType}$ only on its boundary. The boundary of $\proj V_{\CType,\Theta}$ is a measure zero set as it is a relatively open polyhedron, so by the completeness of the Lebesgue measure, $U_{\CType}$ is measurable and $\L(U_{\CType}) = \L(\proj V_{\CType,\Theta})$.
		
		As a polyhedron, $\proj V_{\CType,\Theta}$ has positive Lebesgue measure if and only if it has full dimension $nk$. 
	\end{proof}

	\begin{lemma} \label{lemma:unique}
		A sample with face type $(\CType)$ will have a Fr\'echet mean set of dimension $$k - \dim \conv \left(\textstyle \bigcup_{i \in [n]} G_i \right).$$ 
	\end{lemma}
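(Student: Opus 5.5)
The plan is to read off the affine hull of $\FMset$ from the face type, using the two structural lemmas already proved, and then to identify its direction space as an orthogonal complement.

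\textbf{Step 1: the affine hull of $\FMset$.} By \cref{rmk:positive_dist_to_data} we may assume $d_i = d_B(X_i,\FMset) > 0$ for all $i$. By \cref{lemma:geometry_comb_type}, $\relint \FMset = \bigcap_{i} \relint(X_i - d_i F_i)$, and this set is non-empty. Each set $\relint(X_i - d_iF_i)$ is relatively open, so \cref{lemma:affine_intersection} gives
\[
\aff \FMset = \textstyle\bigcap_{i\in[n]} \aff(X_i - d_i F_i).
\]
Hence $\dim \FMset$ equals the dimension of the linear direction space of this intersection of affine spaces. That direction space is the intersection of the direction spaces of the individual $\aff(X_i - d_iF_i)$.

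\textbf{Step 2: the direction space of each face.} I will use the standard polytope fact that the affine hull of a face is cut out by its active constraints:
\[
\aff F_i = \{\vec y : \vec a^\T \vec y = 1 \text{ for all } \vec a\in\A(F_i)\}.
\]
This holds because $\relint F_i$ consists exactly of the points of $B$ at which only the constraints in $\A(F_i)$ are tight. Consequently the direction space of $\aff(X_i - d_iF_i)$ is $\{\vec v : \vec a^\T\vec v = 0 \ \forall \vec a\in \A(F_i)\}$. Since $G_i = F_i^{\Diamond} = \conv \A(F_i)$, this space equals $(\operatorname{span} G_i)^{\perp}$. Intersecting over $i$, the direction space of $\aff\FMset$ is $\big(\operatorname{span}\bigcup_i G_i\big)^{\perp}$. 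Therefore
\[
\dim \FMset = k - \dim \operatorname{span}\textstyle\bigcup_i G_i .
\]

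\textbf{Step 3: replacing the span by the convex hull.} It remains to show $\dim\operatorname{span}\bigcup_i G_i = \dim\conv\bigcup_i G_i$. This holds as soon as $\vec 0 \in \aff \conv\bigcup_i G_i$, because then the affine hull of the convex hull is a linear subspace containing $\bigcup_i G_i$, and so it equals the span.

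To get this, take any $\theta\in\relint\FMset$. By \cref{lemma:geometry_comb_type}, $X_i - \theta\in\relint C_i$ for every $i$, so by \cref{rmk:subgradient} the optimality condition reads $\vec 0\in\sum_i d_i G_i$. Thus $\vec 0 = \sum_i d_i \vec g_i$ with $\vec g_i\in G_i$. Dividing by $\sum_i d_i>0$ exhibits $\vec 0$ as a convex combination of points of $\bigcup_i G_i$. (Alternatively, \cref{lemma:possible} together with \cref{prop:lifted_type_event} gives even $\vec 0\in\relint\conv\bigcup_i G_i$, but membership in the convex hull suffices.)

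I expect the main point needing care to be Step 2. It requires justifying the description of $\aff F_i$ through $\A(F_i)$, and checking that no other constraints contribute to the affine hull. Step 3 also needs $d_i>0$, so that the signs and normalisation in the convex combination are valid.
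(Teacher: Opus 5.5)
Your proof is correct and follows the same route as the paper: apply \cref{lemma:affine_intersection} to the relative interiors from \cref{lemma:geometry_comb_type}, identify the direction space of each $\aff(X_i - d_iF_i)$ with the orthogonal complement of $G_i$, and intersect to obtain the orthogonal complement of $\bigcup_i G_i$. Your Step 3 is a genuine addition: it uses the optimality condition $\vec 0 \in \sum_i d_i G_i$ to place $\vec 0$ in $\conv \bigcup_i G_i$, so that the span and the convex hull have equal dimension, and the paper's final equality relies on this without saying so.
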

	
	\begin{proof}
		We recall \cref{lemma:affine_intersection}; we have that $\relint \FMset = \bigcap_{i \in [n]} \relint (X_i - d_i F_i)$, so
		\begin{align}
			d  &= \dim \textstyle\bigcap_{i \in [n]} \aff (X_i - d_i F_i), \nonumber\\
			&=\dim \left\{ \theta + \vec v : \theta \in \textstyle\bigcap_{i \in [n]} (X_i - d_i F_i), \vec v \in G_i^{\perp} \text{ for all }i \in [n] \right\}, \nonumber\\
			&=\dim \left(\textstyle \bigcup_{i \in [n]} G_i \right)^{\perp}, \nonumber\\
			&=k - \dim \conv \left(\textstyle \bigcup_{i \in [n]} G_i \right). \label{eq:expr_for_d}
		\end{align}
	\end{proof}
	
	We can now prove \cref{thm:wwp_dim_condition}.
	
	\begin{proof}
		By \cref{lemma:possible} and $\cref{prop:lifted_type_event}$, if \cref{eq:wpp_cond_possible} does not hold, then $V_{\CType,\Theta}$ and $U_{\CType}$ are empty. We therefore assume that \cref{eq:wpp_cond_possible} holds such that $V_{\CType,\Theta}$ is non-empty, and then compute its dimension. 
		
		As $V_{\CType,\Theta}$ is non-empty, we can use \cref{lemma:affine_intersection}:
		\begin{align*}
			\text{aff}(V_{\CType,\Theta}) &= \aff L_{\CType}^{-1}(W) \textstyle \bigcap \bigcap_{i \in [n]} \text{aff} \{ (\sample, \theta) : X_i - \theta \in \relint C_i \}.
		\end{align*}
		Observe that
		\begin{align*}
			\aff L_{\CType}^{-1}(W) &= \{\textstyle\sum_{i \in [n]} \lambda_i \vec x_i : L_{\CType} \vec x_i \in W, \lambda_i \in \R, \sum_{i \in [n]} \lambda_i = 1\}, \\
			&= \{L_{\CType}^{-1}\textstyle\sum_{i \in [n]} \lambda_i \vec w_i : \vec w_i \in W, \lambda_i \in \R, \sum_{i \in [n]} \lambda_i = 1\}, \\
			&= L_{\CType}^{-1} \aff(W).
		\end{align*}
		And therefore 
		\begin{align*}
			\text{aff}(V_{\CType,\Theta}) &= L_{\CType}^{-1}(\aff (W)) \textstyle \bigcap \bigcap_{i \in [n]} \text{aff} \{ (\sample, \theta) : X_i - \theta \in \relint C_i \}.
		\end{align*}
		Recall that $L_{\CType}$ is defined as $L_{\CType}: (\sample,\theta) \mapsto \left(\vec a_1^{\T} (X_1-\theta), \dots, \vec a_n^{\T} (X_n-\theta)\right)$, where $\vec a_i$ is some element of $\A(F_i)$. The affine hull of $C_i$ contains those points $\vec x$ for which $\vec a^{\T}\vec x$ is constant for all $\vec a \in \A(F_i)$; it is the orthogonal to $\aff \A(F_i) = \aff G_i$. Hence $\aff C_i$ has dimension 
		\begin{align*}
			\dim \aff C_i &= k - \dim \aff \A(F_i), \\
			&= k - \dim G_i.
		\end{align*}
		Noting this, we can pick $\vec a_i^1 \dots, \vec a_i^{\dim G_i}$ in $\A(F_i)$ such that $\vec a_i, \vec a_i^1 \dots, \vec a_i^{\dim G_i}$ forms an affine basis of $\aff \A(F_i)$, and $\aff C_i$ contains those $\vec x$ such that for all $j \in [\dim G_i]$, we have $\vec a_i^{j\T}\vec x = \vec a_i^{\T}\vec x$. Then 
		\begin{align*}
			\aff \{ (\sample,\theta) : X_i - \theta \in C_i \} &= \{ (\sample,\theta) : X_i - \theta \in \aff C_i \}, \\
			&= \{ (\sample,\theta) : \forall \, j \in [\dim G_i], 
			\; (\vec a_i^j - \vec a_i)^{\T}(X_i - \theta) = 0 \}.
		\end{align*}
		We define $M_i$ to be the linear map sending $(\sample,\theta)$ to $\left((\vec a_i^{j}-\vec a_i)^{\T}(X_i-\theta)\right)_{j \in [\dim G_i]}$, such that $\aff \{ (\sample,\theta) : X_i - \theta \in \relint C_i \} = M_i^{-1}(\vec 0)$. Then 
		\begin{align*}
			\text{aff}(V_{\CType,\Theta}) &= L_{\CType}^{-1}(\aff(W)) \cap M_1^{-1}(\vec 0) \cap \dots \cap M_n^{-1}(\vec 0), \\
			&= (L_{\CType},M_1,\dots,M_n)^{-1}(\aff(W),\vec 0,\dots,\vec 0).
		\end{align*}
		The linear map $(L_{\CType},M_1,\dots,M_n) : \R^{k(n+1)} \rightarrow \R^{n + \sum_i \dim G_i}$ is given by the matrix:
		$$
		M = \begin{pmatrix}
			\vec a_1^{\T} & \dots & \vec 0^\T & -\vec a_1^{\T} \\
			\vdots & \ddots & \vdots & \vdots \\
			\vec 0^\T & \hdots & \vec a_n^{\T} & -\vec a_n^{\T} \\
			(\vec a_1^1 - \vec a_1)^{\T}& \hdots & \vec 0^\T & (\vec a_1 - \vec a_1^1)^{\T} \\
			\vdots & \vdots & \vdots & \vdots \\
			(\vec a_1^{\dim G_1}- \vec a_1)^\T& \hdots & \vec 0^\T & (\vec a_1^{\dim G_1} - \vec a_1)^\T \\
			\vdots & \ddots & \vdots & \vdots \\
			\vec 0^\T & \hdots & (\vec a_n^1 - \vec a_n)^\T & (\vec a_n - \vec a_n^1)^{\T} \\
			\vdots & \vdots & \vdots & \vdots \\
			\vec 0^\T & \hdots & (\vec a_n^{\dim G_n} - \vec a_n)^{\T} & (\vec a_n - \vec a_n^{\dim G_n})^{\T} \\
		\end{pmatrix}.
		$$
		Subtracting row $i$ from the rows corresponding to $M_i$, we see the rank of this matrix is given by:
		\begin{align*}
			\textstyle \sum_{i \in [n]} \dim \text{span} (\vec a_i, \vec a_i^1, \dots, \vec a_i^{\dim G_i}) &= \textstyle \sum_{i \in [n]} 1+ \dim \aff (\vec a_i, \vec a_i^1, \dots, \vec a_i^{\dim G_i}), \\
			&= \textstyle \sum_{i \in [n]} 1+ \dim G_i, \\
			&= n + \textstyle \sum_{i \in [n]} \dim G_i.
		\end{align*}
		Where the first line is because $\aff (\vec a_i, \vec a_i^1, \dots, \vec a_i^{\dim G_i}) = \aff \A(F_i)$ doesn't contain the origin; for all $\vec a \in \aff \A(F_i), \, \vec a^\T (X_i-\theta) = \|X_i - \theta\| > 0$.\\    
		It follows that the matrix $M$ is full rank.  Let $T: \R^{n + \sum_i \dim G_i} \rightarrow \R^{n + \sum_i \dim G_i-\dim W}$ be the linear projection onto the orthogonal complement of $(\aff(W),\vec 0)$ such that the kernel of $T$ is exactly $(\aff(W),0)$. 
		Then by construction, the map $T \circ M: \R^{k(n+1)} \rightarrow \R^{n + \sum_i \dim G_i-\dim W}$ is surjective with kernel $\aff(V_{\CType,\Theta})$ and by the rank-nullity theorem:
		\begin{align*}
			\dim V_{\CType,\Theta} &= k(n+1) - \rank (T \circ M) = k(n+1) - n - \textstyle \sum_{i \in [n]} \dim G_i + \dim W.
		\end{align*}
		Next, we compute the dimension of $U_{\CType}$. For this, we will use \cite[Theorem 2.4]{balas1998dimension}, which states:
		\begin{theorem*}[Theorem 2.4 of \cite{balas1998dimension}]
			Consider a polytope $Q \subset \R^p \times \R^q$ with affine hull is given by $\{ (\vec u,\vec x) \in \R^p \times \R^q : A\vec u+B\vec x = \vec b\}$. Then $\dim \proj_{\R^q}(Q) = \dim Q - \ker(A)$.
		\end{theorem*}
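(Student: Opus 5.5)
The plan is to reduce the statement to rank--nullity for the linear projection $\pi:\R^p\times\R^q\to\R^q$, $(\vec u,\vec x)\mapsto \vec x$, restricted to the direction space of $\aff Q$. Here ``$\ker(A)$'' is read as $\dim\ker(A)$. Convexity of $Q$ is the only property needed, so the argument will apply verbatim to the relatively open polyhedra $V_{\CType,\Theta}$ to which the result is applied.

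\emph{Step 1: the projection commutes with affine hulls.} Since $\pi$ is linear,
\[
\pi(\aff Q)=\aff \pi(Q).
\]
Indeed, $\pi$ maps affine combinations of points of $Q$ to the same affine combinations of their images, and every point of either side is such a combination. Hence $\dim \pi(Q)=\dim \pi(\aff Q)$, and it suffices to compute the dimension of the image of the affine space $\aff Q$.

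\emph{Step 2: identify the direction space and the kernel.} Assuming $Q\neq\emptyset$ (otherwise there is nothing to prove), $\aff Q=(\vec u_0,\vec x_0)+L$, where
\[
L=\{(\vec u,\vec x): A\vec u+B\vec x=\vec 0\}
\]
is the direction space and $\dim L=\dim Q$. Then $\pi(\aff Q)=\vec x_0+\pi(L)$, and rank--nullity for $\pi|_L$ gives
\[
\dim \pi(L)=\dim L-\dim(\ker\pi\cap L).
\]
Now $\ker\pi\cap L=\{(\vec u,\vec 0): A\vec u=\vec 0\}$, which is isomorphic to $\ker A$. Combining,
\[
\dim\pi(Q)=\dim Q-\dim\ker A,
\]
as claimed.

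\emph{Main obstacle.} The only point needing care is that the description of $\aff Q$ by $A\vec u+B\vec x=\vec b$ must be exact, so that the direction space is precisely the homogeneous solution set $L$. If $(A,B,\vec b)$ only cut out a superset of $\aff Q$, the kernel count would be wrong. This exactness is part of the hypothesis, and in the application it is supplied by the computation $\aff V_{\CType,\Theta}=\ker(T\circ M)$ (up to translation) carried out just before the statement. No rank condition on $(A\ B)$ is required, because the whole argument works inside $L$.
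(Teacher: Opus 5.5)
Your proof is correct and complete. Linearity of $\pi$ gives $\pi(\aff Q)=\aff\pi(Q)$. The exact description of $\aff Q$ identifies its direction space with the homogeneous solution space $L$. Then rank--nullity for $\pi|_L$, with $\ker\pi\cap L=\{(\vec u,\vec 0):A\vec u=\vec 0\}\cong\ker A$, yields $\dim\proj_{\R^q}(Q)=\dim Q-\dim\ker A$.

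The paper does not prove this statement. It only quotes Theorem 2.4 of \cite{balas1998dimension}, so there is no argument of the paper's to compare with. What your self-contained route adds is that it visibly uses only two things: $Q$ is nonempty, and $\dim Q=\dim\aff Q$. Boundedness is never used. This matters because the paper applies the result twice. The first application is to the Cayley polytope $P_{G_1,\dots,G_n}$, which is a polytope. The second is to $V_{\CType,\Theta}$, which is an unbounded, relatively open polyhedral cone, not a polytope as the quoted statement requires. Your argument justifies that second use directly.

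You also correctly isolate the one hypothesis that carries weight: the system $A\vec u+B\vec x=\vec b$ must cut out $\aff Q$ exactly. Redundant equations are harmless. This exactness is what the paper must supply in each application.

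One small correction: the case $Q=\emptyset$ is not vacuous. With the convention $\dim\emptyset=-1$, the formula fails whenever $\ker A\neq 0$. So nonemptiness should be stated as a hypothesis rather than dismissed. This is harmless for the paper, since both sets it applies the result to are nonempty: $V_{\CType,\Theta}$ under condition \eqref{eq:wpp_cond_possible}, and the Cayley polytope trivially.
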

		We apply this theorem to $V_{\CType,\Theta}$; define matrices $M^1, M^2$ and vector $\vec b$ to be such that
		\[
		\aff V_{\CType,\Theta} = \{(\vec x_1, \dots, \vec x_n,\theta) \in \R^{nk} \times \R^{k} :  M^1 (\vec x_1, \dots, \vec x_n) + M^2 \theta = \vec b \}.
		\]
		Note that for $(X_1, \dots, X_n) \in U_{\CType}$, the set $\{\theta \in \R^k : M^2 \theta = \vec b - M^1(\sample) \}$ is now the affine hull of the Fr\'echet mean set $\aff \FMset(\sample)$. Therefore $\Ker M^2 = d$, where $d$ is the dimension of the Fr\'echet mean set for a sample with face type $\CType$. Using \cite[Theorem 2.4]{balas1998dimension}, we see that
		\begin{align}
			\dim U_{\CType} &= \dim \proj V_{\CType,\Theta}, \nonumber\\
			&= \dim V_{\CType,\Theta} - d, \nonumber\\
			&= nk + k - n - \textstyle\sum_{i \in [n]} \dim G_i + \dim W - d.\label{eq:init_U_dim}
		\end{align}
		We now look to express $\dim W$ in terms of the polytope $B$. Recall that we have assumed $W$ to be non-empty. We will express the dimension of $W$ in terms of the \emph{Cayley polytope} of $G_1, \dots, G_n$ which we denote by $P_{G_1, \dots, G_n}$. This is given by
		\begin{align*}
			P_{G_1,\dots,G_n} &= \conv \{ \vec e_i \times G_i \subset \R^n \times \R^k : i \in [n] \}, \\
			&= \left \{ (\vec w, \vec g) \in \R^n \times \R^k: \vec w \in \Delta_{n-1}, \, \vec g = \textstyle \sum_{i \in [n]} w_i \vec g_i, \, \vec g_i \in G_i \right \},
		\end{align*}
		where $\Delta_{n-1}$ denotes the $n-1$ dimensional simplex. By examining this definition, we see that $\vec w \in W \bigcap \Delta_{n-1}$ if and only if $(\vec w, \vec 0) \in \relint P_{G_1, \dots, G_n}$. As $W$ is a cone in the positive orthant, $W$ being non-empty is equivalent to $W \bigcap \Delta_{n-1} = \relint P_{G_1, \dots, G_n} \bigcap (\R^n \times \vec 0)$ being non-empty.
		
		We once again use \cite[Theorem 2.4]{balas1998dimension}; we now take $M^1,M^2, \vec b$ to be such that 
		$$
		\aff P_{G_1, \dots, G_n} = \{(\vec w,\vec g) \in \R^n \times \R^k : M^1 \vec w + M^2 \vec g = \vec b \}.
		$$
		Note that $\vec b - M^2 \vec 0$ is in the image of $M^1$ as $\relint P_{G_1, \dots, G_n}$ intersects $(\R^n \times \vec 0)$, so by \cite[Theorem 2.4]{balas1998dimension}: 
		\begin{align}
			\dim P_{G_1,\dots,G_n} - \dim \proj_{\R^k}(P_{G_1,\dots,G_n}) &= \ker(M^1), \nonumber\\
			&= \dim \aff \{ \vec w \in \R^n : (\vec w,\vec 0) \in \relint P_{G_1, \dots, G_n} \}, \nonumber\\
			&= \dim \left( \relint P_{G_1, \dots, G_n} \textstyle\bigcap (\R^n \times \vec 0) \right) = \dim \left(W \textstyle\bigcap \Delta_{n-1}\right).\label{eq:cayley_proj_dim}
		\end{align}
		Similarly, for any $\vec w \in \relint \Delta_{n-1}$, we have that $\vec b - M^1 \vec w$ is in the image of $M^2$. We pick $\vec w = (1/n,\dots, 1/n)$:
		\begin{align*}
			\dim P_{G_1,\dots,G_n} &= \dim \proj_{\R^n}(P_{G_1,\dots,G_n})+ \ker(M^2) \\
			&= \dim \Delta_{n-1} + \dim \aff \{ \vec g \in \R^k : ((1/n,\dots, 1/n),\vec g) \in \relint P_{G_1,\dots,G_n} \} \\
			&= n-1+ \dim \aff \left\{ 1/n \textstyle\sum_{i \in [n]} \vec g_i: \vec g_i \in \relint G_i \right\} \\
			&= n-1+ \dim \textstyle \sum_{i \in [n]} \aff G_i.
		\end{align*}
		We can write out $\dim \proj_{\R^k}(P_{G_1,\dots,G_n})$ explicitly:
		\begin{align*}
			\dim \proj_{\R^k}(P_{G_1,\dots,G_n}) &= \dim \left\{ \textstyle \sum_{i \in [n]} w_i \vec g_i \in \R^k : \vec g_i \in G_i, \vec w \in \Delta_{n-1} \right\}, \\
			&= \dim \conv \left( \textstyle \bigcup_{i \in [n]} G_i \right).
		\end{align*}
		Combining the computations above with \cref{eq:cayley_proj_dim}, we see that
		\begin{align*}
			\dim W &= 1 + \dim (W \textstyle\bigcap \Delta_{n-1}), \\
			&= 1 + \dim P_{G_1,\dots,G_n} - \dim \proj_{\R^k}(P_{G_1,\dots,G_n}), \\
			&= n+\dim \textstyle \sum_{i \in [n]} \aff G_i - \dim \conv \left( \textstyle \bigcup_{i \in [n]} G_i \right).
		\end{align*}
		Finally, using \cref{lemma:unique}, we have that $d = k - \dim \conv \left(\textstyle \bigcup_{i \in [n]} G_i \right)$.
		Inputting our expressions for $d$ and $\dim W$ into \cref{eq:init_U_dim}:
		\begin{align*}
			\dim U_{\CType} &= nk + k - n - \textstyle \sum_{i \in [n]} \dim G_i + \dim W - d, \\
			&= nk - \textstyle \sum_{i \in [n]} \dim G_i + \dim \sum_{i \in [n]} \aff G_i.
		\end{align*}
		Therefore $\L(U_{\CType}) > 0$ if and only if $\textstyle \sum_{i \in [n]} \dim G_i = \dim \sum_{i \in [n]} \aff G_i$ as required. \\
		The final claim of \cref{thm:wwp_dim_condition} follows from \cref{eq:expr_for_d}; $d=0 \Leftrightarrow k = \dim \conv \left( \bigcup_{i \in [n]} G_i \right)$.
	\end{proof}
	
	\subsection{Defining Unique Fr\'echet Mean Sample Thresholds}
	
	\cref{thm:wwp_dim_condition} is the primary theoretical result of this section, explicitly stating the polytope conditions that dictate when a finite sample can have a unique Fr\'echet mean. The question of whether a sample of size $n$ has a positive probability of producing a unique Fr\'echet mean now equates to whether there is a face type $(\CType)$ that satisfies the conditions in \Cref{thm:wwp_dim_condition}. From these polytope conditions, it is quick to show that there is some finite sample threshold below which Fr\'echet means are almost surely not unique, and above which Fr\'echet means have a positive probability of uniqueness. This is proven by the following corollary.
	
	\begin{corollary}\label{cor:inductive_result}
		Suppose $U_{\CType}$ has positive probability, and that $F_{n+1}$ is a facet of $B$ such that $F_1 \subseteq F_{n+1}$. Then $U_{F_1,\dots,F_{n+1}}$ has positive probability. In particular, if $U_{\CType}$ has positive probability and induces a unique Fr\'echet mean, then $U_{F_1,\dots,F_{n+1}}$ has positive probability and induces a unique Fr\'echet mean.
	\end{corollary}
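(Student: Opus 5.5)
The plan is to reduce the statement directly to the three polytope conditions of \cref{thm:wwp_dim_condition}. Since $U_{\CType}$ has positive probability, that theorem tells us the tuple $(G_1,\dots,G_n)$, with $G_i = F_i^{\Diamond}$, satisfies \cref{eq:wpp_cond_possible} and \cref{eq:wpp_cond_probable}. I will then show that appending $G_{n+1} \coloneqq F_{n+1}^{\Diamond}$ leaves every quantity appearing in those conditions unchanged. Applying \cref{thm:wwp_dim_condition} to the face type $(F_1,\dots,F_{n+1})$ then gives the result.

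The first step is to identify $G_{n+1}$. Because $F_{n+1}$ is a facet of $B$, the anti-isomorphism of face lattices makes $F_{n+1}^{\Diamond}$ a vertex of $B^{\Delta}$. Concretely, $G_{n+1} = \{\vec a\}$, where $\vec a$ is the row of $A$ normal to $F_{n+1}$, so $\dim G_{n+1} = 0$. Since $F_1 \subseteq F_{n+1}$, the order-reversal of polarity gives $G_{n+1} \subseteq G_1$, i.e.\ $\vec a \in \A(F_1)$.

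With this in hand, I will check the two conditions.
\begin{itemize}
\item For \cref{eq:wpp_cond_possible}: since $G_{n+1}\subseteq G_1$, we have $\conv\bigl(\bigcup_{i\in[n+1]} G_i\bigr) = \conv\bigl(\bigcup_{i\in[n]} G_i\bigr)$. Hence $\vec 0$ still lies in its relative interior.
\item For \cref{eq:wpp_cond_probable}: the left side gains $\dim G_{n+1} = 0$. On the right, $\sum_{i\in[n+1]}\aff G_i = \bigl(\sum_{i\in[n]}\aff G_i\bigr) + \vec a$ is a translate, so its dimension is unchanged. Equality is therefore preserved.
\end{itemize}
By \cref{thm:wwp_dim_condition}, $U_{F_1,\dots,F_{n+1}}$ has positive probability.

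For the uniqueness claim, I will use the same identity of convex hulls. The dimension $\dim\conv\bigl(\bigcup_i G_i\bigr)$ is the same for the $n$- and $(n+1)$-tuples. So if \cref{eq:wpp_cond_unique} holds for $(\CType)$, it also holds for $(F_1,\dots,F_{n+1})$, and \cref{lemma:unique} gives a zero-dimensional Fr\'echet mean set.

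There is no serious obstacle here. The only point needing care is the direction of inclusion under polarity, $F_1\subseteq F_{n+1} \Rightarrow F_{n+1}^{\Diamond}\subseteq F_1^{\Diamond}$, together with the fact that a facet's polar is a single vertex. Both are stated in \cref{sec:polytope_geometry}. One should also note that the conditions of \cref{thm:wwp_dim_condition} depend only on the face type and not on the sample, so the theorem applies verbatim to the enlarged tuple.
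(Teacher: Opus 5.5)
Your proposal is correct and follows the paper's proof essentially step for step. You note that $G_{n+1}=F_{n+1}^{\Diamond}$ is a single vertex lying in $G_1$, so $\conv\bigl(\bigcup_i G_i\bigr)$ is unchanged, which preserves \cref{eq:wpp_cond_possible,eq:wpp_cond_unique}; and since $\dim G_{n+1}=0$ and adding the point $\aff G_{n+1}$ only translates $\sum_i \aff G_i$, \cref{eq:wpp_cond_probable} is preserved as well.
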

	
	\begin{proof}
		We first note that as $F_1 \subseteq F_{n+1}$, then $G_{n+1}$ must be some vertex of $G_1$, and so we have that $\conv \bigcup_{i \in [n+1]} G_i = \conv \bigcup_{i \in [n]} G_i$. Hence:
		\begin{align*}
			\vec 0 \in \relint \left( \conv \left(\textstyle \bigcup_{i \in [n]} G_i \right) \right) &\Rightarrow \vec 0 \in \relint \left( \conv \left(\textstyle \bigcup_{i \in [n+1]} G_i \right) \right), \\
			\dim \conv \left(\textstyle \bigcup_{i \in [n]} G_i \right) = k &\Rightarrow \dim \conv \left(\textstyle \bigcup_{i \in [n+1]} G_i \right) = k.
		\end{align*}
		Additionally, if $F_{i+1}$ is a facet, then $G_{i+1}$ and $\aff G_{i+1}$ are a single point. Therefore:
		\begin{align*}
			\textstyle \sum_{i \in [n+1]} \dim G_i &= 0 + \textstyle\sum_{i \in [n]} \dim G_i,\\
			&= \textstyle\dim \sum_{i \in [n]} \aff G_i,\\
			&= \textstyle\dim \sum_{i \in [n+1]} \aff G_i. 
		\end{align*}
		We conclude that if $U_{\CType}$ satisfies \cref{eq:wpp_cond_possible,eq:wpp_cond_probable,eq:wpp_cond_unique}, then so does $U_{F_1,\dots,F_n}$. Similarly, if $U_{\CType}$ induces a unique Fr\'echet mean, then so does $U_{F_1,\dots,F_n}$.
	\end{proof}
	
	As the probability of a unique Fr\'echet mean converges to one (\cref{thm:uniqueness_prob_convergence}), there is some finite sample size which gives a positive probability of a unique Fr\'echet mean. By \cref{cor:inductive_result}, we can then conclude that for any polytope norm, there is a finite sample threshold $N(B)$ such that a sample has a positive probability of producing a unique Fr\'echet mean if and only if $n\geq N(B)$. 
	
	\begin{definition}[Unique Fr\'echet mean sample threshold]
		Fix a polytope norm on $\R^k$ with ball $B$. The \emph{unique Fr\'echet mean sample threshold}, $N(B)$, is the smallest $n$ such that there is a positive probability of $\sample$ having a unique Fr\'echet mean.
	\end{definition}
	
	In the following section, we prove an upper bound for the unique Fr\'echet mean sample threshold $N(B)$ of a general polytope norm, and compute it explicitly for the $\ell_\infty$ and $\ell_1$ norms.
	
	\section{Calculating Uniqueness Sample Thresholds} \label{sec:calculating_examples}
	
	In this section we show the theoretical power of \cref{thm:wwp_dim_condition} by proving that for any polytope norm on $\R^k$, the unique Fr\'echet mean sample threshold is at most $k+1$. We then compute the unique Fr\'echet mean sample thresholds of the $\ell_\infty$ and $\ell_1$ norms --- which are $k$ and 3 respectively.
	
	\begin{proposition}\label{prop:threshold_bound}
		Consider a polytope normed space $(\R^k, \|\cdot\|_B)$ with unit ball $B$. The unique Fr\'echet mean sample threshold $N(B)$ is at most $k+1$.
	\end{proposition}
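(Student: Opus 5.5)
By \cref{thm:wwp_dim_condition}, it is enough to exhibit, for $n = k+1$, a single face type $(F_1,\dots,F_{k+1})$ satisfying \cref{eq:wpp_cond_possible}, \cref{eq:wpp_cond_probable} and \cref{eq:wpp_cond_unique}. Then $U_{F_1,\dots,F_{k+1}}$ has positive probability and forces a unique Fréchet mean, so $N(B)\le k+1$. We will look for a face type in which every $F_i$ is a facet of $B$. Each polar face $G_i = F_i^{\Diamond}$ is then a single vertex $\vec a_i$ of $B^{\Delta}$, i.e. a row of $A$ in an irredundant representation. With this choice condition \cref{eq:wpp_cond_probable} is automatic: $\sum_i \dim G_i = 0$, and $\sum_i \aff G_i$ is a single point, so it also has dimension $0$.

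The two remaining conditions become statements about $k+1$ vertices $\vec a_1,\dots,\vec a_{k+1}$ of the polar polytope $B^{\Delta}$. We need $\dim\conv\{\vec a_1,\dots,\vec a_{k+1}\} = k$, so they are affinely independent and form a full-dimensional simplex. We also need $\vec 0$ to lie in the interior of that simplex, since for a full-dimensional set the relative interior is the interior. So the task reduces to a purely convex-geometric claim: \emph{a full-dimensional polytope $P = B^{\Delta}$ with $\vec 0$ in its interior has $k+1$ vertices whose simplex contains $\vec 0$ in its interior.}

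To prove the claim, we first invoke Carathéodory's theorem. It gives a set of affinely independent vertices whose convex hull contains $\vec 0$, but $\vec 0$ may lie on the boundary of that simplex, and the set may have fewer than $k+1$ points. The way around this is a genericity (perturbation) argument. Triangulate $P$ using only its vertices, for instance by a pulling triangulation, or by coning from a vertex $\vec v$ over a triangulation of the facets not containing $\vec v$. The $k$-simplices of this triangulation cover $P$, and their boundaries form a finite union of hyperplane pieces, which has measure zero. A triangulation alone does not put $\vec 0$ in the interior of any simplex, however, and we cannot move $\vec 0$, since it is the origin. Instead we exploit the freedom in choosing the triangulation. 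Apply a small generic linear change of coordinates to the lifting in a regular triangulation: take heights $h(\vec v)$ generic and triangulate by the lower hull of the points $(\vec v, h(\vec v))$. Then for generic heights no cell boundary passes through $\vec 0$. Equivalently, $\vec 0$ lies on the boundary of a cell only if some $k$ vertices $\vec v_1,\dots,\vec v_k$, with $\vec 0$ in their affine hull, appear as a face of the regular triangulation.

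This is where the obstacle lies. If $\vec 0$ lies in the affine hull of many $k$-subsets of vertices, as happens for centrally symmetric $P$ like the cross-polytope, where $\vec 0$ lies on segments $[\vec v,-\vec v]$, a plain triangulation can put $\vec 0$ on a lower-dimensional face. An alternative route avoids triangulations entirely. Choose a generic point $\vec p$ near $\vec 0$ in the interior of $P$, for instance $\vec p = \epsilon \vec u$ with $\vec u$ generic and $\epsilon$ small, off all hyperplanes spanned by $k$ vertices. By Carathéodory, $\vec p$ lies in a simplex on $k+1$ affinely independent vertices, and by genericity it lies in the interior of that simplex. Since there are finitely many such simplices, a sequence $\epsilon\to 0$ yields one fixed simplex $S$ containing $\epsilon_j\vec u$ in its interior for all $j$, so $\vec 0 \in S$. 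To upgrade this to $\vec 0 \in \operatorname{int} S$, we would need to exclude $\vec 0$ lying on a facet of $S$ for every generic direction $\vec u$. Should this fail, we would instead use central symmetry of $B^{\Delta}$. Pick vertices $\vec a_1,\dots,\vec a_k$ forming a linear basis, which is possible since the vertices span $\R^k$. Then take $\vec a_{k+1}$ to be a vertex maximizing $\langle \vec c, \cdot\rangle$ with $\vec c = -\sum_i \lambda_i^{*}$-type dual direction, so that $-\vec a_{k+1}$ lies strictly inside the cone of $\vec a_1,\dots,\vec a_k$ with all coefficients positive. Our first attempt would be $\vec a_{k+1} = -(\text{vertex})$ combined with replacing the basis by vertices of a facet of $B^{\Delta}$ hit by a generic ray, which makes the coefficients strictly positive. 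This final positivity step is the main difficulty; everything else is routine application of \cref{thm:wwp_dim_condition}.
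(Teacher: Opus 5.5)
There is a genuine gap, and it lies in the reduction itself, not only in the positivity step you flag at the end. You restrict to face types in which every $F_i$ is a facet. This turns the proposition into the claim that $B^{\Delta}$ has $k+1$ vertices whose simplex contains $\vec 0$ in its interior, and that claim is false.

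Take $B=[-1,1]^k$ with $k\ge 2$, so $B^{\Delta}=\conv\{\pm\vec e_1,\dots,\pm\vec e_k\}$. By pigeonhole, any $k+1$ distinct vertices contain an antipodal pair $\pm\vec e_j$. Either they are affinely dependent, violating \eqref{eq:wpp_cond_unique}, or they span a $k$-simplex with $[-\vec e_j,\vec e_j]$ as an edge. In the second case $\vec 0$ lies on that edge, not in the interior, violating \eqref{eq:wpp_cond_possible}. The counting in the proof of \cref{prop:hypercube} in fact shows that facet-only face types need $n\ge 2k$ for this norm.

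None of your fallbacks gets around this. The limiting argument only yields $\vec 0\in S$, not $\vec 0$ in its interior. The central-symmetry fallback needs a vertex $-\vec a_{k+1}$ in the open cone of $k$ linearly independent vertices. But such vertices of the cross-polytope are $\pm\vec e_1,\dots,\pm\vec e_k$ with one sign each, and their open cone is an open orthant containing no vertex.

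The missing idea is to allow some $G_i$ to be edges of $B^{\Delta}$. Then \eqref{eq:wpp_cond_probable} is no longer automatic and must be arranged. The paper's construction is:
\begin{itemize}
\item $G_1=\{\vec v_1\}$ for any vertex $\vec v_1$.
\item $G_{i+1}=[-\vec v_i,\vec v_{i+1}]$ for $i=1,\dots,k-1$, where $\vec v_{i+1}$ is a neighbour of $-\vec v_i$ outside $\mathrm{span}(\vec v_1,\dots,\vec v_i)$. It exists because the neighbours of a vertex affinely span $\R^k$ (\cref{lem:vertex_neighbours}).
\item $G_{k+1}=\{\vec v_{k+1}\}$, with $\vec v_{k+1}$ a vertex strictly on the other side of $\mathrm{span}(\vec v_1,\dots,\vec v_{k-1})$ from $\vec v_k$.
\end{itemize}
The three conditions then follow:
\begin{itemize}
\item The edge directions $\vec v_i+\vec v_{i+1}$ are linearly independent, so both sides of \eqref{eq:wpp_cond_probable} equal $k-1$.
\item Linear independence of $\vec v_1,\dots,\vec v_k$ gives \eqref{eq:wpp_cond_unique}.
\item The union of the $G_i$ contains both $\vec v_i$ and $-\vec v_i$ for $i\le k-1$. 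So the arbitrary real $\lambda_i$ in $\vec 0=\mu_k\vec v_k+\mu_{k+1}\vec v_{k+1}+\sum_{i<k}\lambda_i\vec v_i$ (with $\mu_k,\mu_{k+1}>0$) can be split into strictly positive weights on $\pm\vec v_i$, giving \eqref{eq:wpp_cond_possible}.
\end{itemize}
Central symmetry is thus used to put antipodal pairs into $\bigcup_i G_i$. That is exactly what a configuration of $k+1$ single vertices around $\vec 0$ cannot accommodate.
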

	
	\begin{proof}
		It suffices to find some $G_1,\dots, G_{k+1}$, each a face of $B^{\Delta}$, which together satisfy the conditions of \cref{thm:wwp_dim_condition}. We will construct $G_1,\dots, G_{k+1}$ iteratively, using the following result:
		
		\begin{lemma}[Corollary 11.7 of Chapter 2,  \cite{brondsted2012introduction}]\label{lem:vertex_neighbours}
			Let $Q \subset \R^k$ be some $k$-dimensional polytope with vertex $\vec x_0$, and let $\vec x_1, \dots, \vec x_r$ be the vertices adjacent to $\vec x_0$. Then
			\[
			\aff \{ \vec x_0,\dots, \vec x_r \} = \R^k.
			\]
		\end{lemma}
		We first fix $\vec v_1$ to be any vertex of the polar polytope $B^{\Delta}$. Let $G_1$ be $\vec v_1$, then we define vertices $\vec v_2, \dots, \vec v_k$ and faces $G_2, \dots, G_{k}$ iteratively like so:
		\begin{enumerate}
			\item Using the final line of \cref{lem:vertex_neighbours}, we fix $\vec v_{i+1}$ to be some neighbour of $-\vec v_i$ which is not in $\text{span} \{\vec v_1,\dots, \vec v_i\}$.
			\item Let $G_{i+1}$ be the edge connecting $(-\vec v_i, \vec v_{i+1})$.
		\end{enumerate}
		Note that by construction, $\vec v_1,\dots \vec v_k$ are linearly independent, so $\text{span}(\vec v_1, \dots, \vec v_{k-1})$ is some hyperplane not containing $\vec v_k$. There is then some vertex $\vec v_{k+1}$ of $B^{\Delta}$ which is on the other side of the hyperplane $\text{span}(\vec v_1, \dots, \vec v_{k-1})$ to $\vec v_k$. Let $G_{k+1}$ be the vertex $\vec v_{k+1}$.
		
		Having constructed $G_1, \dots, G_{k+1}$, it remains to prove that they satisfy \cref{eq:wpp_cond_possible,eq:wpp_cond_probable,eq:wpp_cond_unique}. We can first prove \cref{eq:wpp_cond_unique} directly, as $\vec v_1, \dots \vec v_k$ are linearly independent:
		\begin{align*}
			\dim \conv \left( \textstyle\bigcup_{i \in [k+1]} G_i  \right) &= \dim \aff \{ \pm \vec v_1, \dots, \pm \vec v_{k-1}, \vec v_k, \vec v_{k+1} \}, \\
			&= \dim \aff \{\vec 0, \vec v_0, \vec v_1, \dots, \vec v_k, \vec v_{k+1} \}, \\
			&= k.
		\end{align*}
		Next, we prove \cref{eq:wpp_cond_probable}. The edges $G_2, \dots, G_k$ each have dimension $1$, while $G_1, G_{k+1}$ are single vertices, so $\sum_{i \in [k+1]} \dim G_i = k-1$. Then:
		\begin{align*}
			\dim \left(\textstyle\sum_{i \in [k+1]} \aff G_i \right) &= \dim \left(\vec v_{k+1} + \vec v_1 + \textstyle\sum_{i \in [k-1]} (-\vec v_i + \text{span}(\vec v_i + \vec v_{i+1})) \right),\\
			&= \dim \text{span}(\vec v_1 + \vec v_2, \dots, \vec v_{k-1} + \vec v_{k}),\\
			&= k-1 = \textstyle \sum_{i \in [k+1]} \dim G_i,
		\end{align*}
		where the final line follows the fact that $\vec v_1 + \vec v_2, \dots, \vec v_{k-1} + \vec v_{k}$ must be independent as $\vec v_1, \dots, \vec v_k$ are independent. We have proved \cref{eq:wpp_cond_probable}. 
		
		It remains to prove \cref{eq:wpp_cond_possible}. It suffices to show that $\vec 0$ can be expressed as a positive sum of all vertices $\pm \vec v_1, \dots, \pm \vec v_{k-1}, \vec v_k, \vec v_{k+1}$. By definition, the vertices $\vec v_k$ and $\vec v_{k+1}$ are on opposite sides of the hyperplane $\text{span}(\vec v_1, \dots, \vec v_{k-1})$, so there are some $\lambda_1, \dots, \lambda_{k-1}$ and positive $\mu_k, \mu_{k+1}$ such that 
		\begin{align*}
			\vec 0 &= \mu_k \vec v_k + \mu_{k+1} \vec v_{k+1} + \textstyle\sum_{i \in [k-1]} \lambda_i \vec v_i, \\
			&= \mu_k \vec v_k + \mu_{k+1} \vec v_{k+1} + \textstyle\sum_{i \in [k-1]} (\lambda_i + |\lambda_i|+1) \vec v_i + (|\lambda_i| + 1)(-\vec v_i).
		\end{align*}
		This proves \cref{eq:wpp_cond_possible}, and therefore $G_1, \dots, G_{k+1}$ satisfy \cref{eq:wpp_cond_possible,eq:wpp_cond_probable,eq:wpp_cond_unique}. The unique Fr\'echet mean sample threshold $N(B)$ is therefore at most $k+1$.
	\end{proof}
	
	\subsection{Uniqueness sample thresholds for the $\ell_{\infty}$ and $\ell_1$ norms} \label{subsec:UST_examples}
	
	Using the conditions of \Cref{thm:wwp_dim_condition}, we can identify the uniqueness sample threshold in the case of the $\ell_{\infty}$ and $\ell_1$ norms.  \\
	
	We first show that the $\ell_\infty$ norm has a unique Fr\'echet mean sample threshold of $k$ for $k\geq 3$, and a unique Fr\'echet mean sample threshold of $3$ for $k= 2$. The face construction used in the proof is show for the $k=3$ case in \cref{fig:hypercube_face_types}.
	
	\begin{proposition}\label{prop:hypercube}
		Consider the normed space $(\R^k, \ell_{\infty})$, so $B=[-1,1]^k$. When $k \geq 3$, the sample threshold $N([-1,1]^k)$ is $k$. In the $k = 2$ case, $N([-1,1]^2) = 3$.
	\end{proposition}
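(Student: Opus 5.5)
The plan is to specialise the three conditions of \cref{thm:wwp_dim_condition} to the cube. The polar of $B=[-1,1]^k$ is the cross-polytope $B_1^k$. Its proper faces are simplices
\[
G=\conv\{s_j\vec e_j : j\in S\},
\]
where $S\subseteq[k]$ is nonempty and the signs $s_j\in\{\pm1\}$ are fixed. Such a face has $|S|$ vertices and $\dim G=|S|-1$. I will prove a lower bound $n\geq k$, sharpen it to $n\geq 3$ when $k=2$, and give explicit constructions attaining the bounds. Throughout I write $V_i$ for the vertex set of $G_i$.

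\textbf{Lower bound.} Suppose a face type satisfies \cref{eq:wpp_cond_possible} and \cref{eq:wpp_cond_unique}. Then $\conv\bigcup_i G_i$ is a $k$-dimensional subpolytope of $B_1^k$ containing $\vec 0$ in its interior. Its vertices lie among $\pm\vec e_j$, so every one of the $2k$ vertices $\pm\vec e_j$ must occur in some $V_i$. Indeed, if, say, $\vec e_j$ were missing, every point of the hull would satisfy $x_j\leq 0$, and $\vec 0$ would lie on the boundary. Hence $\sum_i|V_i|\geq 2k$. On the other hand, \cref{eq:wpp_cond_probable} gives
\[
\sum_i(|V_i|-1)=\dim\textstyle\sum_i\aff G_i\leq k,
\]
so $2k-n\leq k$, that is, $n\geq k$. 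For $k=2$ this bound is weaker than \cref{rmk:non_unique_for_2n}, which shows $n=2$ never gives uniqueness. Combined with \cref{prop:threshold_bound}, this yields $N([-1,1]^2)=3$.

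\textbf{Construction for $k\geq 3$.} The counting above forces equality when $n=k$. The $V_i$ must partition $\{\pm\vec e_j\}$, with total size $2k$, and the edge-direction spaces must be independent and together span $\R^k$. The natural choice is a cyclic chain of edges
\[
G_i=\conv\{\vec e_i,-\vec e_{i+1}\},\qquad i=1,\dots,k,
\]
indices taken mod $k$, with the sign pattern of one edge adjusted if necessary. This covers all $\pm\vec e_j$, so \cref{eq:wpp_cond_possible} and \cref{eq:wpp_cond_unique} hold, since the hull is all of $B_1^k$. The direction vectors are $\vec e_i+\vec e_{i+1}$. Condition \cref{eq:wpp_cond_probable} then reduces to these $k$ vectors being linearly independent, i.e.\ a signed cycle matrix being nonsingular. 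Such a matrix has determinant $1\pm1$ depending on the sign parity around the cycle. For $k$ odd the plain cycle works, since its determinant is $2$. For $k$ even I would modify one edge, e.g.\ replace the pair $(\vec e_{k},-\vec e_1)$ and the pair $(\vec e_{k-1},-\vec e_k)$ by $(\vec e_k,\vec e_{k-1})$ and $(-\vec e_k,-\vec e_1)$ while keeping the vertex cover intact, and check that the parity now gives determinant $\pm2$.

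\textbf{Main obstacle.} The main obstacle is this sign bookkeeping in the even case, where a naive cycle is singular. My fallback is to verify on $k=3$, where $\conv\{\vec e_1,\vec e_2\}$, $\conv\{-\vec e_2,\vec e_3\}$, $\conv\{-\vec e_3,-\vec e_1\}$ give directions $\vec e_1-\vec e_2$, $\vec e_2+\vec e_3$, $\vec e_1-\vec e_3$, which have rank $3$. I would then do a general determinant computation for a cycle with an odd number of ``$+$'' edges. For $k=2$ the only cycle has length $2$, which is necessarily degenerate, consistent with $N=3$.
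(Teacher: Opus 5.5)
Your lower bound is the paper's argument. Your $k=2$ step is fine: the paper exhibits the triple $\{-\vec e_1\},\{-\vec e_2\},\conv\{\vec e_1,\vec e_2\}$ rather than citing \cref{prop:threshold_bound}. For odd $k$, your cycle is exactly the paper's construction.

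You are also right that the plain cycle breaks for even $k$. The paper's own proof uses $G_i=\conv\{\vec e_i,-\vec e_{i+1}\}$ for every $k\ge 3$ and asserts $\dim\mathrm{span}\{\vec e_1+\vec e_2,\dots,\vec e_k+\vec e_1\}=k$. But $\sum_{i=1}^{k}(-1)^i(\vec e_i+\vec e_{i+1})=\vec 0$ (indices mod $k$) when $k$ is even, so the paper has the same hole.

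\textbf{The gap.} Your repair for even $k$ fails. The new pair $\conv\{\vec e_k,\vec e_{k-1}\}$, $\conv\{-\vec e_k,-\vec e_1\}$ is precisely the image of the old pair $\conv\{\vec e_{k-1},-\vec e_k\}$, $\conv\{\vec e_k,-\vec e_1\}$ under the reflection $x_k\mapsto -x_k$. This reflection fixes the other $G_i$ and is a linear automorphism of $B^{\Delta}$. Hence $\dim\sum_i\aff G_i$ is still $k-1$. Explicitly, $\vec y=\sum_{i=1}^{k-1}(-1)^{i-1}\vec e_i+\vec e_k$ is orthogonal to every edge direction of the new family.

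No sign bookkeeping can help. At $n=k$ the $V_i$ must partition $\{\pm\vec e_j\}$, as you showed. In that case a vector $\vec y$ is orthogonal to all the direction spaces iff there are scalars $c_i$ with $y_j=c_i$ whenever $\vec e_j\in V_i$ and $y_j=-c_{i'}$ whenever $-\vec e_j\in V_{i'}$. Equivalently, $c_i+c_{i'}=0$ along every edge of the graph on $[n]$ that joins, for each coordinate $j$, the face containing $\vec e_j$ to the face containing $-\vec e_j$. The signs drop out, and in this situation \eqref{eq:wpp_cond_probable} holds iff every connected component of this graph contains an odd cycle. If all $G_i$ are edges the graph is $2$-regular, so you would need $k$ to be a sum of odd cycle lengths $\ge 3$. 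The parity you want to adjust is therefore not a free parameter, and for $k=4$ no choice of four edges works at all.

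\textbf{The fix.} The missing ingredient is a higher-dimensional face. For even $k\ge 4$ take $G_i=\conv\{\vec e_i,-\vec e_{i+1}\}$ for $i\le k-2$, $G_{k-1}=\conv\{\vec e_{k-1},-\vec e_1,\vec e_k\}$ and $G_k=\{-\vec e_k\}$. Then:
\begin{itemize}
\item The dimensions sum to $(k-2)+2+0=k$.
\item The directions $\vec e_i+\vec e_{i+1}$ for $i\le k-2$, together with $\vec e_{k-1}+\vec e_1$, form the plain cycle on the odd number $k-1$ of coordinates, so they span $\R^{k-1}\times\{0\}$. The direction $\vec e_{k-1}-\vec e_k$ supplies the last dimension, giving \eqref{eq:wpp_cond_probable}.
\item The vertex sets exhaust $\{\pm\vec e_j\}$, so the hull is $B^{\Delta}$ and \eqref{eq:wpp_cond_possible} and \eqref{eq:wpp_cond_unique} hold.
\end{itemize}
For even $k\ge 6$, a $3$-cycle plus a $(k-3)$-cycle of edges also works. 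With this in place of your even-case step the proof is complete, and it patches the paper's argument as well.
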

	
	\begin{proof}
		We first show that if $n<k$, then $G_1, \dots, G_n$ cannot satisfy \cref{eq:wpp_cond_possible,eq:wpp_cond_probable,eq:wpp_cond_unique}.
		
		Suppose $G_1, \dots, G_n$ satisfy \cref{eq:wpp_cond_possible,eq:wpp_cond_probable,eq:wpp_cond_unique}. As $B = [-1, 1]^k$, the polar polytope $B^{\Delta}$ is given by the cross-polytope $\conv \{ \pm \vec e_1,\dots, \pm \vec e_k \}$. Let $\mathscr V$ be the vertex set of $B^{\Delta}$; that is, $\mathcal V = \{ \pm \vec e_1,\dots, \pm \vec e_k \}$.
		
		If any one of the unit vectors $\vec e_j$ is not in $\bigcup_{i \in [n]} G_i$, then $\conv \left( \bigcup_{i \in [n]} G_i \right)$ is in the half-space with non-positive $j^{th}$ coordinate. As $k = \dim \conv \left( \bigcup_{i \in [n]} G_i\right)$ by \cref{eq:wpp_cond_unique}, we then conclude $\vec 0 \notin \relint \left(\conv \left( \bigcup_{i \in [n]} G_i\right)\right)$; contradiction. Similarly, $\bigcup_{i \in [n]} G_i$ must also contain every $-\vec e_j$. Hence $\bigcup_{i \in [n]} G_i$ contains every vertex in $\mathscr V$.
		
		Every face of $B^{\Delta}$ is a simplex, so a face $G_i$ of dimension $m$ contains $m+1$ vertices of $B^{\Delta}$. By \cref{eq:wpp_cond_probable}:
		\begin{align*}
			k &\geq \dim \textstyle\sum_{i \in [n]} \aff G_i, \\
			&= \textstyle \sum_{i \in [n]} \dim G_i, \\
			&= \textstyle \sum_{i \in [n]} \left(|G_i \cap \mathscr V| - 1\right), \\
			&\geq \textstyle 2k - n.
		\end{align*}
		Hence $n \geq k$; contradiction. 
		
		We now show that for $k \geq 3$, there exists $G_1, \dots, G_k$ satisfying \cref{eq:wpp_cond_possible,eq:wpp_cond_probable,eq:wpp_cond_unique}. We define 
		$$
		G_1, \dots, G_k = \conv (\vec e_1,-\vec e_2), \dots, \conv (\vec e_k, -\vec e_1).
		$$
		This construction is shown in \cref{fig:hypercube_face_types} for $k=3$.
		
		Then $\conv \bigcup_{i \in [k]} G_i = B^{\Delta}$, which has dimension $k$ and $\vec 0$ in its interior. Also:
		\begin{align*}
			\dim \textstyle \sum_{i \in [k]} \aff G_i &= \dim \text{span} \{ e_1 + e_2, \dots, e_k + e_1 \}, \\
			&= k = \textstyle \sum_{i \in [k]} \dim G_i.
		\end{align*}
		In computing the span dimension, we used $k \geq 3$. We conclude $N([-1,1]^k) = k$ for $k \geq 3$.
		
		We now consider the $k=2$ case. By \cref{rmk:non_unique_for_2n}, $N([-1,1]^2) > 2$. We can verify that $G_1 = -\vec e_1, G_2 = -\vec e_2, G_3 = \conv \{\vec e_1, \vec e_2 \}$ satisfies \cref{eq:wpp_cond_possible,eq:wpp_cond_probable,eq:wpp_cond_unique}, so $N([-1,1]^2) = 3$.    
	\end{proof}
	
	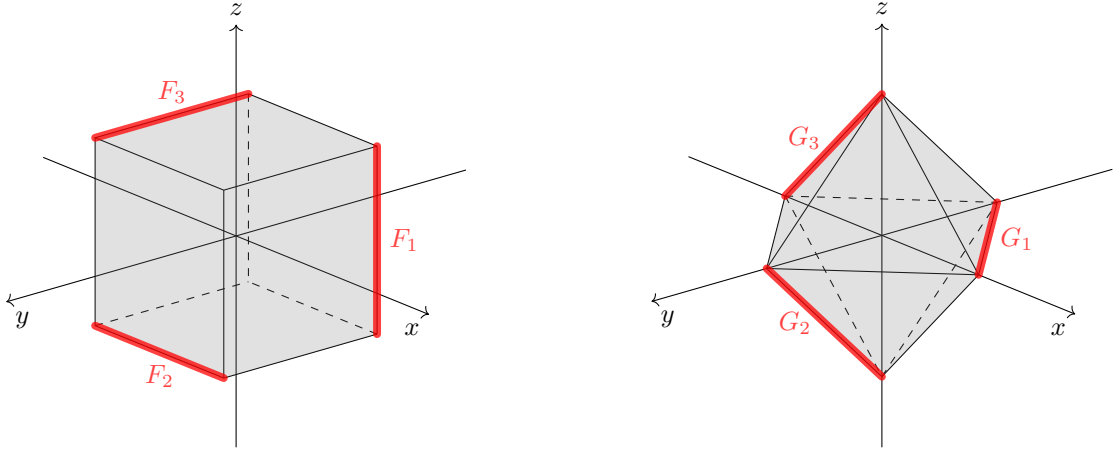
\begin{figure*}[ht!]
		\centering
		\begin{subfigure}[b]{0.48\textwidth}
			\centering
			\tdplotsetmaincoords{70}{50}
			\begin{tikzpicture}[scale=1.33, tdplot_main_coords]
				\coordinate (O) at (0,0,0);
				
				\draw[->] (-3,0,0) -- (3,0,0) node[anchor=north east]{$x$};
				\draw[->] (0,3,0) -- (0,-3,0) node[anchor=north west]{$y$};
				\draw[->] (0,0,-2.25) -- (0,0,2.25) node[anchor=south]{$z$};
				
				\coordinate (A) at (1,1,1);
				\coordinate (B) at (1,-1,1);
				\coordinate (C) at (-1,-1,1);
				\coordinate (D) at (-1,1,1);
				\coordinate (E) at (1,1,-1);
				\coordinate (F) at (1,-1,-1);
				\coordinate (G) at (-1,-1,-1);
				\coordinate (H) at (-1,1,-1);
				
				\draw (A) -- (B) -- (C) -- (D) -- (A);
				\draw (E) -- (F) -- (G);
				\draw[dashed] (G) -- (H);
				\draw[dashed] (E) -- (H);
				\draw (A) -- (E);
				\draw (B) -- (F);
				\draw (C) -- (G);
				\draw[dashed] (D) -- (H);
				\fill[fill=gray,opacity=1/8](A) -- (B) -- (F) -- (E) -- cycle;
				\fill[fill=gray,opacity=1/8](C) -- (D) -- (H) -- (G) -- cycle;
				\fill[fill=gray,opacity=1/8](A) -- (D) -- (H) -- (E) -- cycle;
				\fill[fill=gray,opacity=1/8](C) -- (B) -- (F) -- (G) -- cycle;
				\fill[fill=gray,opacity=1/8](A) -- (B) -- (C) -- (D) -- cycle;
				\fill[fill=gray,opacity=1/8](E) -- (F) -- (G) -- (H) -- cycle;
				
				\draw[red,line width=3,line cap=round,opacity=3/4] (A) -- (E)  node[midway,anchor=west]{$F_1$};
				\draw[red,line width=3,line cap=round,opacity=3/4] (D) -- (C)  node[midway,anchor=south]{$F_3$};
				\draw[red,line width=3,line cap=round,opacity=3/4] (G) -- (F)  node[midway,anchor=north]{$F_2$};
				
			\end{tikzpicture}
			\caption{The 3-dimensional hypercube, which is the unit ball for the $\ell_{\infty}$ norm, and a face type which induces a unique Fr\'echet mean.}
		\end{subfigure}
		\hfill
		\begin{subfigure}[b]{0.48\textwidth}
			\centering
			\tdplotsetmaincoords{70}{50}
			\begin{tikzpicture}[scale = 2,tdplot_main_coords]
				\coordinate (O) at (0,0,0);
				
				\draw[->] (-2,0,0) -- (2,0,0) node[anchor=north east]{$x$};
				\draw[->] (0,2,0) -- (0,-2,0) node[anchor=north west]{$y$};
				\draw[->] (0,0,-1.5) -- (0,0,1.5) node[anchor=south]{$z$};
				
				\coordinate (A) at (1,0,0);
				\coordinate (B) at (0,1,0);
				\coordinate (C) at (0,0,1);
				\coordinate (D) at (-1,0,0);
				\coordinate (E) at (0,-1,0);
				\coordinate (F) at (0,0,-1);
				
				\draw (D) -- (E) -- (A) -- (B);
				\draw[dashed] (D) -- (B);
				\draw (D) -- (C) -- (A) -- (F);
				\draw[dashed] (D) -- (F);
				\draw (F) -- (E) -- (C) -- (B);
				\draw[dashed] (F) -- (B);
				\fill[fill=gray,opacity=1/8](A) -- (B) -- (C) -- cycle;
				\fill[fill=gray,opacity=1/8](D) -- (E) -- (F) -- cycle;
				\fill[fill=gray,opacity=1/8](A) -- (B) -- (F) -- cycle;
				\fill[fill=gray,opacity=1/8](C) -- (D) -- (E) -- cycle;
				\fill[fill=gray,opacity=1/8](A) -- (E) -- (C) -- cycle;
				\fill[fill=gray,opacity=1/8](D) -- (B) -- (F) -- cycle;
				\fill[fill=gray,opacity=1/8](A) -- (E) -- (F) -- cycle;
				\fill[fill=gray,opacity=1/8](D) -- (B) -- (C) -- cycle;
				
				\draw[red,line width=3,line cap=round,opacity=3/4] (B) -- (A)  node[midway,anchor=west]{$G_1$};
				\draw[red,line width=3,line cap=round,opacity=3/4] (D) -- (C)  node[midway,anchor=base east]{$G_3$};
				\draw[red,line width=3,line cap=round,opacity=3/4] (F) -- (E)  node[midway,anchor=east]{$G_2$};
			\end{tikzpicture}
			\caption{The polar polytope for the 3-dimensional hypercube, and the faces polar to $F_1, F_2, F_3$ which satisfy conditions \cref{eq:wpp_cond_possible,eq:wpp_cond_probable,eq:wpp_cond_unique} in \cref{thm:wwp_dim_condition}.}
		\end{subfigure}
		\caption{A face type for the $\ell_{\infty}$ unit ball in 3 dimensions which induces a unique Fr\'echet mean. In the proof of \cref{prop:hypercube}, we use a generalisation of this example to show the unique Fr\'echet mean sample threshold for the $\ell_{\infty}$ norm in $\R^k$ is $k$ for $k\geq 3$.}
		\label{fig:hypercube_face_types}
	\end{figure*}
	
	When using the $\ell_{\infty}$ norm, the sample size threshold is the dimension of the space, $k$. We might expect this to be standard behaviour, but in fact for the $\ell_1$ norm the sample size threshold is just 3.
	
	\begin{proposition}\label{prop:cross-polytope}
		Consider the normed space $(\R^k, \ell_1)$, so $B=\conv \{ \pm \vec e_1, \dots, \pm \vec e_k \}$. Then $N(B) = 3$.
	\end{proposition}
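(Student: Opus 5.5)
The plan is to prove $N(B)\ge 3$ and $N(B)\le 3$ separately. The lower bound is immediate from \cref{rmk:non_unique_for_2n}: for samples of size $2$ the Fr\'echet mean set is almost surely a $(k-1)$-dimensional polytope. The remark also covers $n=1$ trivially, with the understanding $k\ge 2$. For $k=1$ the norm is Euclidean and the means are always unique, so I will assume $k\ge 2$. For the upper bound, by \cref{thm:wwp_dim_condition} it suffices to exhibit three faces $G_1,G_2,G_3$ of the polar polytope satisfying \cref{eq:wpp_cond_possible,eq:wpp_cond_probable,eq:wpp_cond_unique}. Here the polar polytope is $B^\Delta=[-1,1]^k$, the hypercube.

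For the construction, I take $G_1$ to be the facet $\{\vec x\in[-1,1]^k : x_1=1\}$, of dimension $k-1$. I take $G_2$ and $G_3$ to be the two vertices $\vec v_2=(-1,-1,\dots,-1)$ and $\vec v_3=(-1,1,\dots,1)$. These lie in the opposite facet $x_1=-1$ and are antipodal within it.

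Condition \cref{eq:wpp_cond_probable} is then a direct count. We have $\sum_i\dim G_i=(k-1)+0+0=k-1$. Also $\sum_i\aff G_i=\aff G_1+\vec v_2+\vec v_3$ is a translate of $\aff G_1$, so it has dimension $k-1$ as well.

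For \cref{eq:wpp_cond_unique,eq:wpp_cond_possible}, it suffices to show that $\vec 0$ lies in the interior of $P=\conv(G_1\cup\{\vec v_2,\vec v_3\})$. An interior point already forces $\dim P=k$, and in full dimension relative interior equals interior. Consider the slice $x_1=0$ of $P$. It contains the midpoint sets $\tfrac12(G_1+\vec v_2)$ and $\tfrac12(G_1+\vec v_3)$. In the last $k-1$ coordinates, these are the boxes of half-width $\tfrac12$ centred at $-\tfrac12\mathbf 1$ and $+\tfrac12\mathbf 1$, respectively. Their convex hull therefore contains an open $(k-1)$-dimensional neighbourhood of the midpoint of the two centres, which is $\vec 0$. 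Together with the points $(\pm 1,\vec 0)$, which come from $G_1$ and from $\tfrac12(\vec v_2+\vec v_3)=(-1,\vec 0)$, this gives a full-dimensional neighbourhood of $\vec 0$ inside $P$.

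The main point needing care is this last interior argument, in particular making rigorous that $\vec 0$ is interior rather than merely on the boundary of $P$. The argument above reduces it to the elementary fact that the convex hull of two open boxes contains the open segment between their centres together with a neighbourhood of it. Everything else is bookkeeping with \cref{thm:wwp_dim_condition}.
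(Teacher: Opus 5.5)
Your proof is correct and follows the paper's strategy. The lower bound comes from \cref{rmk:non_unique_for_2n}, and the upper bound from an explicit triple of faces of $B^{\Delta}=[-1,1]^k$ checked against \eqref{eq:wpp_cond_possible}, \eqref{eq:wpp_cond_probable} and \eqref{eq:wpp_cond_unique}.

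Only the witness differs. The paper uses the facet $x_1=-1$, the vertex $\mathbf 1$, and the edge from $(1,-1,\dots,-1)$ to $(-1,\dots,-1)$, so both sides of \eqref{eq:wpp_cond_probable} equal $k$. Your facet $x_1=1$ with the two vertices $(-1,\mp\mathbf 1)$ gives $k-1$ on both sides, since the affine-hull sum is just a translate of a hyperplane.

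Your slicing argument for $\vec 0\in\mathrm{int}\,\conv(G_1\cup\{\vec v_2,\vec v_3\})$ is sound. The paper's device is shorter, though, and applies verbatim to your triple. Write $\vec 0=\tfrac12(1,0,\dots,0)+\tfrac14\vec v_2+\tfrac14\vec v_3$ with $(1,0,\dots,0)\in\relint G_1$; then \eqref{eq:rockafellar_thm6.9} gives \eqref{eq:wpp_cond_possible}. Condition \eqref{eq:wpp_cond_unique} holds because $G_1$ is a facet and $\vec v_2\notin\aff G_1$.

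Your $k\ge 2$ caveat is a real improvement on the paper: for $k=1$ two points already have a unique mean, so the statement fails there. However, your claim that the remark ``covers $n=1$ trivially'' is false. A single point is its own unique Fr\'echet mean, so $N(B)=3$ requires the convention $n\ge 2$. The paper adopts this convention silently too, since face types are undefined when a data point is a mean.
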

	
	\begin{proof}
		The polar polytope of the $\ell_1$ ball is the hypercube. We fix $G_1$ to be the face polar to $-\vec e_1$, so $G_1 = \{ (-1, x_2, \dots, x_k): |x_i| \leq 1 \}$. Now let $G_2$ be the vertex $(1,\dots, 1)$ and let $G_3$ be the edge between $ (1, -1, \dots, -1)$ and $(-1, -1, \dots, -1)$. 
		
		The face $G_1$ has dimension $k-1$ but $G_2$ is not in the affine hull of $G_1$, so their convex hull must have dimension $k$. This proves \cref{eq:wpp_cond_unique}.
		
		The affine hulls of $G_1, G_2, G_3$ are $\{ \vec x : x_1 = -1 \}, \{ (1,\dots, 1) \}, \{ (\lambda, -1, \dots, -1): \lambda \in \R \}$ respectively. The sum of their affine hulls is the entire space $\R^k$ and so has dimension $k$, which is also the sum of their individual dimensions. This proves \cref{eq:wpp_cond_probable}. 
		
		Note that $(-1, 0,\dots, 0)$ is in the relative interior of $G_1$, $(1,\dots, 1)$ is the relative interior of $G_2$, and $(0, -1, \dots, -1)$ is in the relative interior of $G_3$. We then have:
		\[
		\vec 0 = \tfrac{1}{3} (-1, 0,\dots, 0) + \tfrac{1}{3} (1,\dots, 1) + \tfrac{1}{3} (0, -1, \dots, -1)
		\] 
		Hence $\vec 0 \in \relint \conv (G_1 \cup G_2 \cup G_3)$, and we have proved \cref{eq:wpp_cond_possible}.
		
		By \cref{rmk:non_unique_for_2n}, we have $N(B)>2$, so $N(B) = 3$.
	\end{proof}
	
	\begin{remark}
		We note that the construction in \cref{prop:cross-polytope} can be replicated for any polytope whose polar $B^{\Delta}$ has some facet $G$ with vertices $\vec u$ and $\vec v$ whose mid-point passes through the relative interior of $G$. This includes any norm whose polar is a zonotope.
	\end{remark}
	
	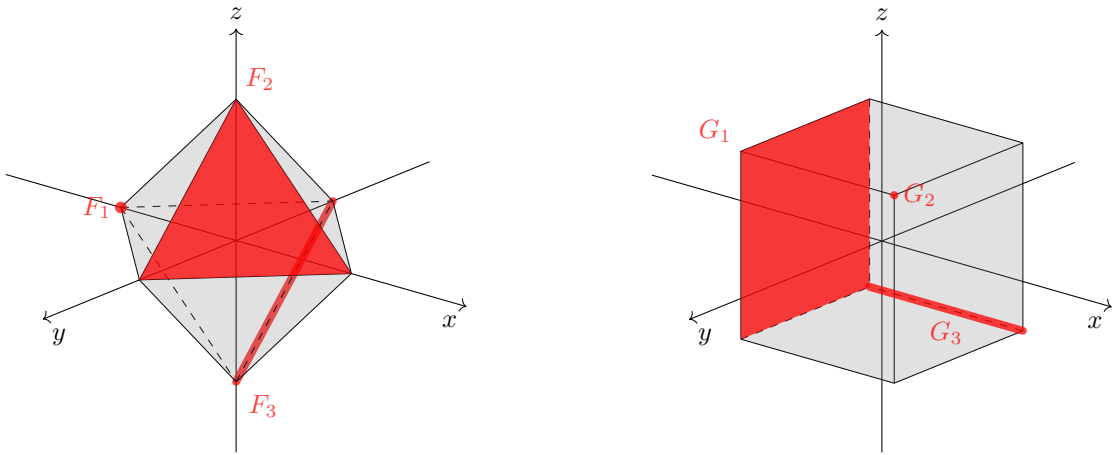
\begin{figure*}[ht!]
		\centering
		\begin{subfigure}[b]{0.48\textwidth}
			\centering
			\tdplotsetmaincoords{70}{40}
			\begin{tikzpicture}[scale = 2,tdplot_main_coords]
				\coordinate (O) at (0,0,0);
				
				\draw[->] (-2,0,0) -- (2,0,0) node[anchor=north east]{$x$};
				\draw[->] (0,2,0) -- (0,-2,0) node[anchor=north west]{$y$};
				\draw[->] (0,0,-1.5) -- (0,0,1.5) node[anchor=south]{$z$};
				
				\coordinate (A) at (1,0,0);
				\coordinate (B) at (0,1,0);
				\coordinate (C) at (0,0,1);
				\coordinate (D) at (-1,0,0);
				\coordinate (E) at (0,-1,0);
				\coordinate (F) at (0,0,-1);
				\draw[red,line width=3,line cap=round,opacity=3/4] (B) -- (F)  node[anchor=north west]{$F_3$};
				
				\draw (D) -- (E) -- (A) -- (B);
				\draw[dashed] (D) -- (B);
				\draw (D) -- (C) -- (A) -- (F);
				\draw[dashed] (D) -- (F);
				\draw (F) -- (E) -- (C) -- (B);
				\draw[dashed] (F) -- (B);
				\fill[fill=gray,opacity=1/8](A) -- (B) -- (C) -- cycle;
				\fill[fill=gray,opacity=1/8](D) -- (E) -- (F) -- cycle;
				\fill[fill=gray,opacity=1/8](A) -- (B) -- (F) -- cycle;
				\fill[fill=gray,opacity=1/8](C) -- (D) -- (E) -- cycle;
				\fill[fill=gray,opacity=1/8](A) -- (E) -- (C) -- cycle;
				\fill[fill=gray,opacity=1/8](D) -- (B) -- (F) -- cycle;
				\fill[fill=gray,opacity=1/8](A) -- (E) -- (F) -- cycle;
				\fill[fill=gray,opacity=1/8](D) -- (B) -- (C) -- cycle;

				\filldraw[red,opacity=3/4] (D) circle (1pt) node[anchor=east]{$F_1$};
				\fill[fill=red,opacity=3/4](A) -- (E) -- (C) node[anchor=south west,red]{$F_2$} -- cycle;
			\end{tikzpicture}
			\caption{The 3-dimensional cross-polytope, which is the unit ball for the $\ell_{1}$ norm, and a face type which induces a unique Fr\'echet mean.}
		\end{subfigure}
		\hfill
		\begin{subfigure}[b]{0.48\textwidth}
			\centering
			\tdplotsetmaincoords{70}{40}
			\begin{tikzpicture}[scale=1.33, tdplot_main_coords]
				\coordinate (O) at (0,0,0);
				
				\draw[->] (-3,0,0) -- (3,0,0) node[anchor=north east]{$x$};
				\draw[->] (0,3,0) -- (0,-3,0) node[anchor=north west]{$y$};
				\draw[->] (0,0,-2.25) -- (0,0,2.25) node[anchor=south]{$z$};
				
				\coordinate (A) at (1,1,1);
				\coordinate (B) at (1,-1,1);
				\coordinate (C) at (-1,-1,1);
				\coordinate (D) at (-1,1,1);
				\coordinate (E) at (1,1,-1);
				\coordinate (F) at (1,-1,-1);
				\coordinate (G) at (-1,-1,-1);
				\coordinate (H) at (-1,1,-1);
				
				\draw (A) -- (B) -- (C) -- (D) -- (A);
				\draw (E) -- (F) -- (G);
				\draw[dashed] (G) -- (H);
				\draw[dashed] (E) -- (H);
				\draw (A) -- (E);
				\draw (B) -- (F);
				\draw (C) -- (G);
				\draw[dashed] (D) -- (H);
				\fill[fill=gray,opacity=1/8](A) -- (B) -- (F) -- (E) -- cycle;
				\fill[fill=gray,opacity=1/8](C) -- (D) -- (H) -- (G) -- cycle;
				\fill[fill=gray,opacity=1/8](A) -- (D) -- (H) -- (E) -- cycle;
				\fill[fill=gray,opacity=1/8](C) -- (B) -- (F) -- (G) -- cycle;
				\fill[fill=gray,opacity=1/8](A) -- (B) -- (C) -- (D) -- cycle;
				\fill[fill=gray,opacity=1/8](E) -- (F) -- (G) -- (H) -- cycle;
				
				\filldraw[color=red,fill=red,opacity=3/4] (B) circle (1pt) node[anchor=west,red]{$G_2$};
				\fill[fill=red,opacity=3/4](C) node[anchor=south east,red]{$G_1$} -- (D) -- (H) -- (G) -- cycle;
				\draw[red,line width=3,line cap=round,opacity=3/4] (E) -- (H)  node[midway,anchor=north]{$G_3$};
			\end{tikzpicture}
			\caption{The polar polytope for the 3-dimensional cross-polytope, and the faces polar to $F_1, F_2, F_3$ which satisfy conditions \cref{eq:wpp_cond_possible,eq:wpp_cond_probable,eq:wpp_cond_unique} in \cref{thm:wwp_dim_condition}.}
		\end{subfigure}
		\caption{A face type for the $\ell_{1}$ unit ball in 3 dimensions which induces a unique Fr\'echet mean. In the proof of \cref{prop:cross-polytope}, we use a generalisation of this example to show the unique Fr\'echet mean sample threshold for the $\ell_{1}$ norm in $\R^k$ is $3$.}
		\label{fig:cross-polytope_face_types}
	\end{figure*}
	
	\cref{fig:cross-polytope_face_types} shows our construction of a 3-sample face type which has a positive probability of producing a unique Fr\'echet mean in the case of the $\ell_1$ norm in 3 dimensions. \\
	
	In this section we have used \cref{thm:wwp_dim_condition} to study the unique Fr\'echet mean sample threshold, not only computing the sample thresholds for the $\ell_\infty$ and $\ell_1$ norms, but also proving a general upper bound of $k+1$, where $k$ is the dimension of our state space.
	
	\section{Exact Computation of Fr\'echet Means} \label{sec:computation}
	
	We conclude this paper with a brief note on computation. In order to run numerical experiments on the dimension of a Fr\'echet set, we must be able to perform exact computations.
	
	We have defined Fr\'echet means as the solutions of the global minimisation of a non-smooth objective function, but we can re-frame this as the constrained optimisation of a quadratic form.
	
	\begin{lemma}\label{lem:constrained_quadratic_optimisation}
		A point $\theta$ is in $\FMset(\vec x_1,\dots, \vec x_n)$ if and only if there is some $(d_1, \dots, d_n)$ such that $\theta, \vec d$ give a solution to the following constrained quadratic optimisation problem: 
		\begin{align} \label{eq:FM_objective}
			\text{minimise}\quad d_1^2+\dots+d_n^2, \qquad \text{given} \quad \forall \;  i \in [n], \vec a \in A: \quad \vec a^{\T}\vec x_i \leq d_i +\vec a^{\T}\theta.
		\end{align}    
	\end{lemma}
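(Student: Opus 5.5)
The plan is the standard epigraph reformulation of a max-type objective: introduce auxiliary variables $d_i$ that dominate $\|\vec x_i - \theta\|_B$, and observe that at an optimum they must equal these norms. Recall from \eqref{eqn:BnormDefinition} that $\|\vec x_i - \theta\|_B = \max_{\vec a \in A} \vec a^{\T}(\vec x_i - \theta)$. Hence the constraint family in \eqref{eq:FM_objective} for a fixed $i$ says exactly that $d_i \geq \|\vec x_i - \theta\|_B$. Since $B$ is centrally symmetric, $\vec a \in A$ implies $-\vec a \in A$ (or at least the maximum is a norm and so nonnegative), so any feasible $(\theta, \vec d)$ has $d_i \geq 0$. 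The feasible set is therefore $\{(\theta,\vec d) : d_i \geq \|\vec x_i-\theta\|_B \text{ for all } i\}$.

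The key step is that, for fixed $\theta$, the objective $\sum_i d_i^2$ is strictly increasing in each $d_i$ on $\R_{\geq 0}$. So the infimum over $\vec d$ with $\theta$ fixed is attained uniquely at $d_i = \|\vec x_i - \theta\|_B$, and equals $\sum_i \|\vec x_i-\theta\|_B^2 = n f(\theta)$. Consequently the optimal value of \eqref{eq:FM_objective} is $n \min_\theta f(\theta)$, which is finite and attained; attainment holds because $f$ is continuous and coercive, so its minimum exists.

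With this, I would prove the two directions separately. For ($\Rightarrow$), take $\theta \in \FMset$ and set $d_i = \|\vec x_i - \theta\|_B$. This pair is feasible, and its value is $n f(\theta) = n\min f$, which is the optimal value, so it is a solution. For ($\Leftarrow$), let $(\theta,\vec d)$ be a solution. First I would argue that $d_i = \|\vec x_i-\theta\|_B$: if some $d_i$ were strictly larger, lowering it would stay feasible and strictly decrease the objective, contradicting optimality. Then $n f(\theta) = \sum d_i^2 = n\min f$, so $\theta \in \FMset$.

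No step is a real obstacle. The only point needing care is nonnegativity of $d_i$, which is required for monotonicity of $d_i^2$. This follows because feasibility forces $d_i \geq \|\vec x_i-\theta\|_B \geq 0$, so the squared objective is monotone on the feasible region.
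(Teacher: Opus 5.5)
Your proposal is correct and follows the same epigraph argument as the paper. Feasibility means $d_i \geq \|\vec x_i - \theta\|_B$, and optimality forces equality because otherwise some $d_i$ could be decreased, so the problem reduces to minimising the sample Fr\'echet function. You spell out nonnegativity of the $d_i$, attainment of the minimum, and the two directions separately, which the paper leaves implicit, but the route is the same.
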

	\begin{proof}
		Let $Q$ be the polyhedron defined by $Q = \{(\theta, \vec d) : \forall i \in [n], \vec a \in A , \, \vec a^\T(\vec x_i - \theta) \leq d_i \}$. Notice that any $(\theta, \vec d)$ in $Q$ which minimises \cref{eq:FM_objective} will be of the form $(\theta, (d(\theta,\vec x_i))_{i \in [n]})$, as otherwise there is some $d_i$ we can decrease. Hence minimisation of $\|\vec d\|_2^2$ over $Q$ is equivalent to
		\begin{align*}
			\text{minimise}&\quad d_1^2+\dots+d_n^2\\
			\text{given} &\quad \forall \;  i \in [n], d_i = \|\vec x_i - \theta\|_B.
		\end{align*}
		This is exactly the optimisation problem of sample Fr\'echet means.
	\end{proof}
	We have translated the tropical Fr\'echet mean problem into a quadratic optimisation problem with linear constraints in $\R^{n+k}$. In order to compute the full Fr\'echet mean set, and in particular to study its dimension, we note that \cref{lem:constrained_quadratic_optimisation} allows us to write out $\FMset$ more explicitly. Let $Q \subset \R^{k+n}$ be the polyhedron given by $Q = \{(\theta, \vec d) : \forall i \in [n], \vec a \in A , \, \vec a^\T(\vec x_i - \theta) \leq d_i \}$, and let $Q'$ be the projection of $Q$ to $\R^n$. Then the Fr\'echet mean set $\FMset(\sample)$ is given by: 
	\[
	\FMset = \pi_{\R^k}\pi^{-1}_{\R^n}(\argmin_{\vec d \in Q'} \norm{2}{\vec d})
	\]
	
	This observation enables us to break down our computation of $\FMset$ into three steps:
	\begin{enumerate}
		\item Compute a representation of $Q'$,
		\item Compute the optimal $\vec d' = \argmin_{\vec d \in Q'} \norm{2}{\vec d}$,
		\item Compute the polytope $\FMset = \pi_{\R^k}\pi^{-1}_{\R^n} \vec d'$ .
	\end{enumerate}
	
	Step 3 is routine; from $\vec d'$, we can write down the hyperplane representation for $\FMset(\sample)$ as
	\[
	\FMset(\sample) = \{ \theta \in \R^k : \, \forall \vec a \in A, \, \vec a^\T \vec x_i - d_i' \leq \vec a^\T \theta \}.
	\]
	Step 2 is the most computationally intensive part of this computation, but can be solved exactly using the Frank-Wolfe \citep{frank1956algorithm} algorithm. This algorithm applies iterated linear programming to find a point in a polytope in the positive quadrant with minimal Euclidean norm; this is exactly the problem of computing $\vec d' = \argmin_{\vec d \in Q} \norm{2}{\vec d}$. The Frank-Wolfe algorithm avoids having to compute matrix inverses and is known to terminate in $n$ iterations, where $n$ is the dimension of the ambient space.
	
	Step 1 requires projecting $Q \subset \R^{n+k}$ to $Q' \subset\R^n$. The initial polyhedron $Q$ is given in hyperplane form, so to project this to $\R^n$, we perform Fourier--Motzkin elimination \cite[Chapter 1]{ziegler2012lectures} to compute the hyperplane representation of $Q'$ from the hyperplane representation of $Q$.
	
	This full procedure is presented in \cref{alg:exact_computation}.
	
	\begin{algorithm}
		\caption{Fr\'echet Mean Set Computation}\label{alg:exact_computation}
		\begin{algorithmic}
			\Require normal vector matrix $A$, data matrix $\vec x$.
			\State $Q \gets \{(\theta, \vec d) : \forall i \in [n], \vec a \in A , \, \vec a^\T \vec x_i - d_i \leq \vec a^\T \theta \}$\Comment{Feasible region for $(\theta, \vec d)$ in $\mathbb{R}^{k+n}$}
			\State $Q' \gets \text{Fourier--Motzkin}(Q, [k+1:k+n])$ \Comment{Feasible region for $\vec d$ in $\mathbb{R}^n$}
			\State $d_0 \gets \argmin_{d \in Q'} \, \vec 1^\T \vec d$
			\State $t=0$
			\While{$t \leq n$} \Comment{Perform $n$ steps of Frank-Wolfe}
			\State $\vec s_{t} \gets \argmin_{\vec s \in Q'} \vec s^{\T}\vec d_t$
			\State $\eta_t \gets \min \{-(\vec s_t - \vec d_t)^{\T} \vec d_t / 2\|\vec s_t - \vec d_t\|_2^2 , 1 \}$
			\State $\vec d_{t+1} \gets \vec d_t + \eta_t(\vec s_t-\vec d_t)$
			\State $t \gets t+1$
			\EndWhile
			\State $\vec d' \gets \vec d_n$
			\State $\FMset \gets \{ \theta \in \R^k : \, \forall \vec a \in A, \, \vec a^\T \vec x_i - d_i' \leq \vec a^\T \theta \}$ \Comment{Compute Fr\'echet mean set}\\
			\Return $\FMset$
		\end{algorithmic}
	\end{algorithm}
	
	We now use \cref{alg:exact_computation} to verify our results on the sample threshold for unique Fr\'echet means by showing that the sample threshold for uniqueness is dependent on the dimension $k$ for $\ell_{\infty}$ Fr\'echet means.
	
	\begin{figure}[ht]
		\centering
		\includegraphics[width=0.6\linewidth]{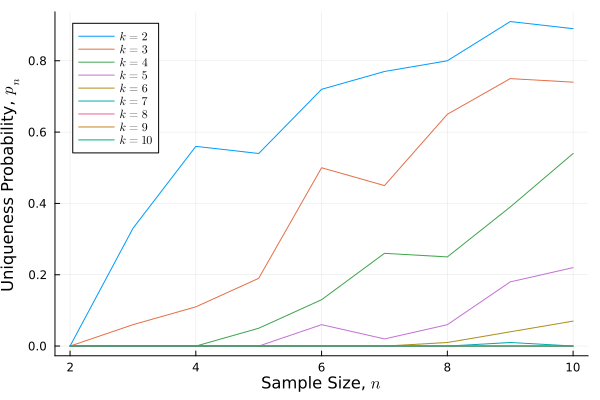}
		\caption{100 independent datasets of rationalised Gaussian data of size $n$ in dimension $k$ are generated. This shows the proportion of the 100 datasets which have a unique $\ell_{\infty}$-Fr\'echet mean.}
		\label{fig:Linf_uniqueness_probability}
	\end{figure}
	
	When computing $\ell_{\infty}$ Fr\'echet means, we run dimension computations for $2 \leq n,k \leq 10$; this is less computationally intensive compared to other norms, in part due to the small number of normal vectors and their linear independence. For each pair $k,n$, we generate 100 independent sets of rationalised Gaussian data and compute the dimension of each Fr\'echet mean set. \Cref{fig:Linf_uniqueness_probability} shows the proportion of these datasets producing a unique Fr\'echet mean as the sample size $n$ increases.
	
	In \cref{fig:Linf_uniqueness_probability}, we see that the uniqueness probability is initially 0 before roughly increasing. The value of $n$ at which this probability becomes positive is increasing in $k$; for $k=2,3$, the observed sample threshold is $3$, as we proved in \cref{prop:hypercube}. However, for higher $k$ the observed sample threshold is slightly higher than expected. This is likely due to the fact that even though $p_k$ is theoretically positive, it may be a very low possibility so not observable over 100 samples.
	
	\section{Discussion}
	
	In this paper we have shown that the probability of a unique Fr\'echet mean over a polytope normed space converges to 1, and we have identified geometric conditions which govern the number of sample points needed before Fr\'echet means are unique with positive probability. In doing so, we were able to define the \emph{unique Fr\'echet mean sample threshold} of a polytope norm --- Fr\'echet means are almost never unique below this sample threshold, but above this threshold the probability of uniqueness is positive and converging to one. We have shown this threshold to be at most $k+1$, where $k$ is the dimension of our space, and computed it exactly for the $\ell_\infty$ and $\ell_1$ norms. Finally, we presented an algorithm for exact computation, which can be used to study the probability of unique Fr\'echet means through simulation.
	
	We have shown an upper bound of $k+1$ for the unique Fr\'echet mean sample threshold of a general polytope norm, which is within one of our $\ell_{\infty}$ example. We hope to close this gap as future work, finding a strict upper bound. This will require either finding a polytope example with a unique Fr\'echet mean sample threshold of $k+1$, or lowering our general upper bound to $k$ and showing the $\ell_\infty$ case to be the worst possible.
	
	Currently, our geometric conditions which dictate the sample threshold for unique Fr\'echet means (\cref{thm:wwp_dim_condition}) do not provide a way of computing the threshold for an arbitrary polytope. At best, one could test every $n$-tuple of faces in $\mathfrak F (B^{\Delta})$ against the conditions in \cref{thm:wwp_dim_condition} for increasing $n$ until the necessary conditions are satisfied. It would be of interest to find a more direct method of computation, perhaps utilising the Gale transform of $B^{\Delta}$ \cite[Chapter 6]{ziegler2012lectures}, which records which sets of vertices contain $\vec 0$ in the relative interior of their convex hull. Relating the sample threshold of $B$ to known values in matroid theory would be particularly useful for finding efficient methods of computation for sample thresholds.
	
	The conditions given in \cref{thm:wwp_dim_condition} state which face types have a positive probability of occurring, and independently, which face types give rise to a unique Fr\'echet mean. We can extend this to get conditions for face types to produce a $d$-dimensional Fr\'echet mean set with positive probability, and then we can adapt \cref{cor:inductive_result} to formalise a $d$-dimensional Fr\'echet mean sample threshold. This raises the question - how do the sample thresholds compare for different values of $d$? As an initial investigation, we can study their behaviour computationally using \cref{alg:exact_computation}.
	
	\section{Acknowledgements}
	A.Mc. acknowledges the support of the Natural Sciences and Engineering Research Council of Canada (NSERC), [RGPIN-2025-03968, DGECR-2025-00237]. A.Mo. and R.T. are partly funded by EPSRC grant number EP/Y028872/1 Mathematical Foundations of Intelligence.
	
	\newpage
	
	\bibliographystyle{authordate3}
	
	\bibliography{Sources}
	
	\pagebreak
	
	\appendix
	
	\section{Zero Subgradient Event Proof} \label{appsec:prop_proof}
	
	\EventEquivalence*
	
	The proof of Proposition \ref{prop:lifted_type_event} follows directly from the lemmas below.
	
	\begin{lemma}\label{lemma:polyhedral_cone_W}
		The set $W = \{\vec w \in \R^n_{+} : \vec 0 \in \sum_{i \in [n]} w_i \relint G_i\}$ is a relatively open, polyhedral cone.
	\end{lemma}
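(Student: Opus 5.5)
The plan is to describe $W$ explicitly as the projection of a relatively open polyhedron. Write each polar face as a polytope, $G_i = \conv(V_i)$, where $V_i$ is its finite set of vertices. Then $\relint G_i$ consists of the strictly positive convex combinations of $V_i$, that is,
\[
\relint G_i = \left\{ \textstyle\sum_{\vec v \in V_i} \mu_{i,\vec v} \vec v : \mu_{i,\vec v} > 0, \; \sum_{\vec v} \mu_{i,\vec v} = 1 \right\}.
\]
Because each $w_i > 0$, the same description gives
\[
w_i \relint G_i = \left\{ \textstyle\sum_{\vec v \in V_i} \nu_{i,\vec v} \vec v : \nu_{i,\vec v} > 0, \; \sum_{\vec v} \nu_{i,\vec v} = w_i \right\}.
\]
Hence $\vec w \in W$ if and only if there are strictly positive numbers $\nu_{i,\vec v}$ satisfying two linear conditions:
\[
\textstyle\sum_{i}\sum_{\vec v \in V_i} \nu_{i,\vec v}\vec v = \vec 0, \qquad \sum_{\vec v \in V_i} \nu_{i,\vec v} = w_i \text{ for all } i .
\]
I read $\R^n_+$ as strictly positive weights. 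The distances in the zero subgradient event are positive by \cref{rmk:positive_dist_to_data}, and positivity of $w_i$ is in any case forced by $\sum_{\vec v} \nu_{i,\vec v} = w_i$ with $\nu_{i,\vec v}>0$.

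Let $\Lambda$ be the set of pairs $(\vec w, \nu) \in \R^n \times \R^{N}$, with $N = \sum_i |V_i|$, that satisfy the equations above together with $\nu > 0$ coordinatewise. Then $\Lambda$ is the intersection of an affine (indeed linear) subspace with an open orthant. It is therefore a relatively open polyhedral cone, being closed under positive scaling. Moreover $W$ is exactly the image of $\Lambda$ under the coordinate projection to $\R^n$.

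The key step is that a linear image of a relatively open convex set is relatively open; this is \cite[Theorem 6.6]{rockafellar1970convex}, which gives $\relint(L C) = L(\relint C)$. The image of a polyhedral set is polyhedral, by Fourier--Motzkin elimination or the Minkowski--Weyl theorem. The image of a cone is a cone. Combining these, $W$ is the relative interior of the polyhedral cone $\proj(\closure \Lambda)$. Here I use that $\relint \closure \Lambda = \Lambda$, since $\Lambda$ is relatively open and convex. A relative interior of a polyhedral cone is an intersection of a linear subspace with finitely many open half-spaces. This gives exactly the form asserted in \cref{prop:lifted_type_event}.

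The one point needing care is the first one: $\relint$ of a polytope equals its set of strictly positive convex combinations of vertices. This is again \cite[Theorem 6.9]{rockafellar1970convex}, applied to the singletons $\{\vec v\}$, and it is the same identity as \cref{eq:rockafellar_thm6.9}. The scaled version for $w_i \relint G_i$ then follows by multiplying through by $w_i>0$. I expect no other obstacle; if $W$ is empty the statement holds trivially.
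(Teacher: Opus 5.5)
Your proof is correct and follows essentially the same route as the paper. Both arguments use \cite[Theorem 6.9]{rockafellar1970convex} to write $w_i \relint G_i$ via strictly positive vertex coefficients, and both realise $W$ as a linear image of a relatively open polyhedral cone: your $\Lambda$ is just the graph of the paper's summing map $L_W$ restricted to $\Ker M_G \cap \R^{N}_{>0}$. Your final step through $\proj(\closure \Lambda)$ and \cite[Theorem 6.6]{rockafellar1970convex} justifies the closing claim, which the paper only asserts, that such an image is again relatively open and polyhedral, and your reading of $\R^n_+$ as strictly positive matches what the paper's proof implicitly uses.
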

	
	\begin{proof}
		Let $\vec g_1^{(i)}, \dots, \vec g_{r_i}^{(i)} \in \R^k$ be the vertices of $G_i$. Let $M_G \in \R^{k \times \sum_{i \in [n]} r_i}$ be the matrix whose columns are given by all of the $\vec g_j^{(i)}, \, i\in [n], j \in [r_i]$. We then index $\vec x \in  \R^{\sum_{i \in [n]} r_i}$ by $\vec x = (x_1^{(1)},\ldots,x_{m_1}^{(1)},\ldots,x_{1}^{(n)},\ldots,x^{(n)}_{m_n})$. Define the linear map $L_W: \R^{\sum_{i \in [n]} r_i} \rightarrow \R^n$ by $\left(L_W(\vec x)\right)_i = \sum_{j \in [r_i]} x_j^{(i)}$. We will show that $W = L_W(\Ker M_G \cap \R^{\sum_{i \in [n]} r_i}_{+})$. \\
		By \cite[Theorem 6.9]{rockafellar1970convex}, which we quoted in \cref{eq:rockafellar_thm6.9}:
		\[
		\relint G_i = \bigg\{ \textstyle \sum_{j \in [r_i]} \lambda^{(i)}_{j} \vec g_j^{(i)} : \sum_{j \in [r_i]} \lambda^{(i)}_j = 1,\; \lambda^{(i)}_j > 0 \bigg\}.
		\]
		Therefore $\vec w \in W$ if and only if there exist positive $\left\{\lambda^{(i)}_j\right\}_{i \in [n], j \in [r_i]}$ satisfying $\sum_{j \in [r_i]} \lambda_j^{(i)} = 1$ and 
		\[
		\vec 0 = \textstyle \sum_{i \in [n]} \sum_{j \in [r_i]} w_i \lambda^{(i)}_j \vec g_j^{(i)}.
		\]
		For any $\vec w \in W$, pick $\vec \lambda$ satisfying the above and define $\vec x$ by $x_j^{(i)} = w_i\lambda_j^{(i)} > 0$. Then we have that $M_G \vec x = \vec 0$ and $L_W(\vec x) = \vec w$, and we conclude that $W \subseteq L_W(\Ker M_G \cap \R^{\sum_{i \in [n]} r_i}_{+})$. \\
		Now suppose $\vec x \in \Ker M_G \cap \R^{\sum_{i \in [n]} r_i}_{+}$. Then setting $\lambda_j^{(i)} = x_j^{(i)}\left(\sum_{j \in [r_i]} x_j^{(i)}\right)^{-1}$, we see that $L_W(\vec x)$ satisfies the condition above and so is in $W$. We conclude that $L_W(\Ker M_G \cap \R^{\sum_{i \in [n]} r_i}_{+}) \subseteq W$. \\
		As $\Ker M_G \cap \R^{\sum_{i \in [n]} r_i}_{+}$ is a relatively open, polyhedral cone, so is $L_W(\Ker M_G \cap \R^{\sum_i r_i}_{>0}) = W$.
	\end{proof}
	
	\begin{lemma}\label{lemma:RelintGrad_to_RelintFMset}
		Given any points  $(\sample, \theta) \in \R^{k(n+1)}$, if 
		$\vec 0 \in \relint(\partial f(\theta))$ then $\theta \in \relint \FMset$.
	\end{lemma}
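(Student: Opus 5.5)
Since $\vec 0 \in \relint(\partial f(\theta)) \subseteq \partial f(\theta)$, the point $\theta$ is already a Fr\'echet mean, so $\theta \in \FMset$. The work is to show that $\theta$ lies in the relative interior. We will argue via the directional derivatives of $f$. The natural first step is to record that $\FMset$ is exactly the set of points where $f$ attains $f(\theta)$. Since $f$ is convex, a direction $\vec t$ points into $\FMset$ from $\theta$ (in the sense that $\theta + h\vec t \in \FMset$ for small $h>0$) only if $\partial_{\vec t} f(\theta) = 0$. The key identity is $\partial_{\vec t} f(\theta) = \sup_{\vec v \in \partial f(\theta)} \vec v^\T \vec t$ from the definition of directional derivatives.

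Next, write $S = \partial f(\theta)$, a polytope (a Minkowski sum of scaled faces $G_i$, by \cref{rmk:subgradient}), and let $L$ be the linear subspace parallel to $\aff S$. Because $\vec 0 \in \relint S$, we have $\aff S = L$, and there is $\epsilon>0$ with $\epsilon$-ball in $L$ contained in $S$. Hence $\sup_{\vec v \in S} \vec v^\T \vec t \ge \epsilon \|\proj_L \vec t\|_2$, with equality to $0$ precisely when $\vec t \in L^\perp$. So $\partial_{\vec t} f(\theta) = 0$ iff $\vec t \perp L$, and $\partial_{\vec t} f(\theta) > 0$ otherwise.

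Then we show that every direction $\vec t \in L^\perp$ actually stays in $\FMset$ for a short distance, in both directions $\pm\vec t$. Here we use the polytope structure. Near $\theta$, for $h$ small, the face fan implies $\A(X_i - \theta - h\vec t) \subseteq \A(X_i - \theta) = \A(F_i)$. So $\|X_i - \theta - h\vec t\|_B = \max_{\vec a \in \A(F_i)} \vec a^\T(X_i-\theta) - h\,\vec a^\T \vec t$, and consequently $f$ restricted to a small neighbourhood of $\theta$ is a max-of-linear-type expression determined by the $G_i$. For $\vec t \perp L$, every element of each $\|X_i-\theta\|G_i$ has the same inner product with $\vec t$. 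This uses that $L$ contains $\aff$ of each summand direction, because the linear span of $S - S$ contains $G_i - G_i$. Therefore $\vec a^\T \vec t$ is constant over $\vec a \in \A(F_i)$, i.e. equals some $c_i$, and $f(\theta+h\vec t)=\frac1n\sum_i(d_i - h c_i)^2$ is a smooth quadratic in $h$ for small $|h|$. Its derivative at $0$ is $-\frac2n\sum_i d_i c_i = \vec v^\T\vec t = 0$ for any $\vec v\in S$, since $\vec 0 \in S$. The quadratic term is nonnegative, so $f(\theta + h\vec t)\geq f(\theta)$. Equality is forced by the fact that $\theta$ is a minimiser, which by \cref{lemma:FM_dists} requires the distances to be constant on $\FMset$, so this route alone does not settle it; this is the main obstacle.

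To resolve the obstacle, we will argue instead from the other side. $\FMset$ is a convex set containing $\theta$, and by the second step every direction leaving $\theta$ into $\FMset$ lies in $L^\perp$, so $\aff \FMset \subseteq \theta + L^\perp$. To get $\theta \in \relint \FMset$, it suffices to show that $\FMset$ contains a neighbourhood of $\theta$ in $\aff\FMset$. We would do this by comparing with any point $\theta' \in \relint \FMset$. Along the segment from $\theta'$ through $\theta$ extended slightly past $\theta$, the function $f$ is convex and piecewise quadratic. By the computation above, its one-sided derivative at $\theta$ in the direction away from $\theta'$ is $0$ (the direction lies in $L^\perp$). The $c_i$ vanish because the distances are constant along the segment inside $\FMset$ (\cref{lemma:FM_dists}), and constancy of the $c_i$ on a neighbourhood of $\theta$ then shows that the quadratic in $h$ has zero second-order term too. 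Hence $f$ is constant slightly beyond $\theta$, so $\theta$ is an interior point of a segment in $\FMset$ through $\theta'\in\relint\FMset$, which gives $\theta\in\relint\FMset$.
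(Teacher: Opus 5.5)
Your argument is correct, and after the directional-derivative step it takes a different route from the paper's.

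Your second step ($\partial_{\vec t} f(\theta)=0$ exactly when $\vec t\perp L$) is a sharper form of the paper's Claim. From there the two proofs diverge.

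\textbf{The paper's route.} It argues by contradiction, using only that $f$ is a maximum of finitely many quadratics. Suppose $\theta$ lies on the relative boundary, and take $\vec v$ pointing into $\FMset$. Let $f_2$ be the quadratic piece agreeing with $f$ on $[\theta-\epsilon\vec v,\theta]$. Then $f_2'(0)=0$ and $f_2>f(\theta)$ to the left of $0$, so $f_2$ is a strictly convex parabola with vertex at $0$. Hence $f\ge f_2>f(\theta)$ just inside $\FMset$, a contradiction.

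\textbf{Your route.} You argue directly, in three moves:
\begin{enumerate}
\item The hypothesis puts every direction from $\theta$ into $\FMset$ in $L^\perp$.
\item Because $G_i-G_i\subseteq L$, for $\vec t\in L^\perp$ each distance $d_B(X_i,\theta+h\vec t)=d_i-hc_i$ is a single affine function on a two-sided neighbourhood of $h=0$.
\item Taking $\vec t=\theta-\theta'$ with $\theta'\in\relint\FMset$, \cref{lemma:FM_dists} on the inner side forces every $c_i=0$. So $f$ is constant slightly past $\theta$, and the line-segment principle gives $\theta\in\relint\FMset$.
\end{enumerate}

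\textbf{What each buys.} Your version yields more: each individual distance is unkinked across $\theta$, and $\aff\FMset\subseteq\theta+L^\perp$. It leans on three things the paper's argument does not need:
\begin{itemize}
\item the explicit Minkowski-sum form of $\partial f(\theta)$;
\item \cref{lemma:FM_dists};
\item $d_i>0$, which is what gives $G_i-G_i\subseteq L$.
\end{itemize}
You should say explicitly that if some $d_i=0$, then $\FMset=\{X_i\}$ by \cref{lemma:FM_dists} and there is nothing to prove. The paper's argument, by contrast, applies to any convex finite maximum of quadratics.

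\textbf{On your abandoned claim.} The claim you drop in your third step is not merely unproven; it is false. Take $\ell_\infty$ on $\R^2$ with $X_1=(0,0)$, $X_2=(2,0)$ and $\theta=(1,0)$. Then $\partial f(\theta)=\{\vec 0\}$, so $L^\perp=\R^2$, yet $\FMset=\{1\}\times[-1,1]$. Only the inclusion $\aff\FMset\subseteq\theta+L^\perp$ holds, and that inclusion is all your argument actually uses.
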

	
	\begin{proof}
		We prove this lemma by contradiction; suppose that $\vec 0 \in \relint\partial f(\theta)$ but $\theta \notin \relint \FMset$. We begin by translating the property $\vec 0 \in \relint \partial f(\theta)$ into a condition on the directional derivatives of $f$.
		\begin{claim*}
			For all $\vec v \in \R^k$, either $\partial_{\vec v} f(\theta) = \partial_{- \vec v} f(\theta) = 0$ or $\partial_{\vec v} f(\theta), \partial_{-\vec v} f(\theta) > 0$.
		\end{claim*}
		\begin{proof}
			For all $\vec v \in (\partial f(\theta))^{\perp}$, we immediately have
			\begin{align*}
				\partial_{\vec v} f(\theta) = \sup_{\vec g \in \partial f(\theta)} \vec g^{\T}\vec v = 0, \qquad 
				\text{ and } \qquad \partial_{-\vec v} f(\theta) = \sup_{\vec g \in \partial f(\theta)} -\vec g^{\T}\vec v = 0.
			\end{align*}
			Now consider $\vec v \notin (\partial f(\theta))^{\perp}$, so there is some $\vec g_1 \in \partial f(\theta)$ such that $\vec g_1^{\T}\vec v \neq 0$. As $\vec 0 \in \relint\partial f(\theta)$, there is some $\vec g_2 \in \partial f(\theta)$ such that $\vec 0 = \lambda \vec g_1 + (1-\lambda) \vec g_2$, $0 < \lambda < 1$. Therefore $0=\lambda \vec g_1^{\T}\vec v + (1-\lambda) \vec g_2^{\T} \vec v$, and $\vec g_1^{\T}\vec v$ and $\vec g_2^{\T}\vec v$ must be non-zero with opposite signs. Hence:
			\begin{align*}
				\partial_{\vec v} f(\theta) = \sup_{\vec g \in \partial f(\theta)} \vec g^{\T}\vec v > 0, \qquad
				\text{ and } \qquad \partial_{-\vec v} f(\theta) = \sup_{\vec g \in \partial f(\theta)} -\vec g^{\T}\vec v > 0.
			\end{align*}
		\end{proof}    
		We have assumed that $\vec 0 \in \relint\partial f(\theta)$ but $\theta \notin \relint \FMset$, so $\theta$ is on the relative boundary of $\FMset$. Therefore there is some direction $\vec v$ such that $\theta + \lambda \vec v \in \FMset$ and $\theta - \lambda\vec v \notin \FMset$ for all positive $\lambda$. Then by the convexity of $f$ we have that $\partial_{\vec v} f(x) = 0$, and so by the claim above we also have $\partial_{-\vec v} f(x) = 0$.
		
		Note that $f$ is a maximum over quadratic functions whose coefficients are given by vectors $\vec a \in A$:
		\[
		f(\vec x) = \textstyle \frac{1}{n} \sum_{i \in [n]} \|X_i - \vec x \|_B^2 =  \frac{1}{n} \sum_{i \in [n]} \max_{\vec a \in A} \left[ \vec a^{\T}(X_i- \vec x) \right]^2 =  \max_{\vec a_1,\dots, \vec a_n \in A}  \frac{1}{n} \sum_{i \in [n]} \left[\vec a_i^{\T}(X_i- \vec x)\right]^2.
		\]
		Then we can fix $\vec a_1,\dots, \vec a_n, \vec b_1, \dots, \vec b_n$ to be the coefficients of quadratics $f_1, f_2$ which agree with $f$ on the intervals $[\theta, \theta+\epsilon \vec v]$ and $[\theta - \epsilon \vec v, \theta]$ respectively. Explicitly, we pick $\vec a_1,\dots, \vec a_n, \vec b_1, \dots, \vec b_n \in A$ such that for some positive epsilon:
		\begin{align*}
			\text{ for }\lambda \in [0,\epsilon]:  \qquad f(\theta + \lambda \vec v) = f_1(\lambda) &\coloneqq \textstyle \sum_{i \in [n]} \vec a_i^{\T}(X_i-(\theta + \lambda \vec v))^2, \\
			\text{ for }\lambda \in [-\epsilon,0]:  \qquad f(\theta + \lambda \vec v) = f_2(\lambda) &\coloneqq \textstyle\sum_{i \in [n]} \vec b_i^{\T}(X_i-(\theta + \lambda \vec v))^2. 
		\end{align*}
		As $\vec v$ is directed into $\FMset$, $f(\theta+\lambda \vec v)$ is constant for small $\lambda$. Hence $f_1$ must be constant for all $\lambda$. Meanwhile, $-\vec v$ is directed away from $\FMset$ so $f_2$ must be strictly decreasing for $\lambda \in [-\epsilon,0]$. Moreover,
		\begin{align*}
			f_2'(0)  = -\partial_{-\vec v}f(\theta) = 0.
		\end{align*}  
		The quadratic $f_2$ is therefore strictly decreasing on $[-\epsilon,0]$ with $f_2'(0) = 0$, so it follows that $f_2$ is strictly increasing for positive $\lambda$. As $f_1$ is constant for positive $\lambda$ and $f_1(0) = f_2(0) = f(\theta)$, $f_2$ must strictly dominate $f_1$ for small $\lambda$. This contradicts the maximality of $ f_1(\lambda)$ for $\lambda \in [0,\epsilon]$.
	\end{proof}
	
	\begin{lemma}\label{lemma:U_in_ClV}
		For all $\epsilon > 0$, $(X_1,\dots, X_n,\theta) \in U_{F_1,\dots, F_n,\Theta}$, there is some $(Y_1, \dots, Y_n,\theta) \in V_{\CType, \Theta}$ such that $\|X_i - Y_i\|_B < \epsilon$.
	\end{lemma}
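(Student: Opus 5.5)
The plan is to perturb only the norms $\|X_i-\theta\|_B$, keeping $\theta$ and the directions $X_i-\theta$ fixed. By \cref{rmk:positive_dist_to_data} we have $d_i = \|X_i - \theta\|_B > 0$. Since $(X_1,\dots,X_n,\theta)\in U_{\CType,\Theta}$, we know $\theta\in\FMset$ and $X_i-\theta\in\relint C_i$, so by \cref{rmk:subgradient} we get $\vec 0 \in \sum_i d_i G_i$. In other words, $\vec d=(d_1,\dots,d_n)$ lies in the closed cone
\[
\overline W = \Big\{\vec w\in\R^n_{\ge 0} : \vec 0\in \textstyle\sum_i w_i G_i\Big\}.
\]
We would like $\vec d$ to lie in $W=\{\vec w : \vec 0\in\sum_i w_i\relint G_i\}$ from \cref{lemma:polyhedral_cone_W}.

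The key claim is that $\closure W = \overline W$ whenever $W\neq\emptyset$. To see this, write $\overline W = L_W(\Ker M_G\cap \R^{\sum r_i}_{\ge 0})$ and $W = L_W(\Ker M_G\cap\R^{\sum r_i}_{>0})$, as in that lemma. If the open-orthant slice of $\Ker M_G$ is nonempty, then it is the relative interior of the closed slice, and its closure is the closed slice. Applying the linear map $L_W$ and Rockafellar Thm.~6.6 then gives $\relint \overline W = W$, and hence $\closure W=\overline W \ni \vec d$.

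Granting this, the construction is immediate. Pick $\vec w\in W$ with $|w_i-d_i|$ small, and set $Y_i=\theta+(w_i/d_i)(X_i-\theta)$. Then $Y_i-\theta\in\relint C_i$ because $C_i$ is a cone, $\|Y_i-\theta\|_B=w_i$, and $\|X_i-Y_i\|_B=|w_i-d_i|<\epsilon$. Finally $\vec 0\in\sum_i w_i\relint G_i=\relint\sum_i w_iG_i$, so $(Y_1,\dots,Y_n,\theta)\in V_{\CType,\Theta}$.

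The main obstacle is showing $W\neq\emptyset$. By \cref{eq:rockafellar_thm6.9} this is equivalent to $\vec 0\in\relint K$, where $K=\conv\bigcup_i G_i$; here $\vec 0\in K$ already holds. Suppose instead that $\vec 0$ lies on the relative boundary of $K$. By proper separation (Rockafellar Thm.~11.6), there is a $\vec v$ with $\vec a^\T\vec v\le 0$ for all $\vec a\in K$ and $\vec v$ not identically zero on $K$. Write $\vec 0=\sum_i d_i\vec g_i$ with $\vec g_i\in G_i$. Then every $\vec v^\T\vec g_i=0$, so $\max_{\vec a\in G_i}\vec a^\T\vec v=0$ for each $i$, while some $G_j$ has $\min_{\vec a\in G_j}\vec a^\T\vec v<0$.

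For small $t>0$, only the constraints in $\A(F_i)$ remain active near $X_i-\theta$ (recall $\conv \A(F_i) = G_i$). This gives
\begin{align*}
\|X_i-\theta+t\vec v\|_B &= d_i + t\max_{\vec a\in G_i}\vec a^\T\vec v = d_i, \\
\|X_i-\theta-t\vec v\|_B &= d_i - t\min_{\vec a\in G_i}\vec a^\T\vec v,
\end{align*}
and the second quantity exceeds $d_j$ strictly when $i=j$. By \cref{cor:FMisPolytope}, $\FMset=\bigcap_i (X_i - d_i\partial B)$. Hence $\theta-t\vec v\in\FMset$, whereas $\theta+t\vec v\notin\FMset$ for all small $t>0$. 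So $\theta$ lies on the relative boundary of $\FMset$, contradicting $\theta\in\relint\FMset$. Therefore $W\neq\emptyset$ and the construction above goes through.
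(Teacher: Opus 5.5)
Your proposal is correct and follows essentially the same route as the paper. You first rule out $\vec 0$ lying on the relative boundary of $\conv\bigcup_i G_i$, using a separating direction $\vec v$ and the fact that $\theta\in\relint\FMset$; you then approximate $(d_1,\dots,d_n)$ by a point of $W$ and rescale each $X_i$ radially about $\theta$. The only differences are minor: you compute $\|X_i-\theta\pm t\vec v\|_B$ directly from the locally active constraints rather than through directional derivatives of $f$, and you obtain the approximation from $\closure W=\overline W$ via the polyhedral description of $W$ and Rockafellar's Theorems 6.5--6.6, where the paper uses the explicit interpolation $(1-\delta)w_i+\delta\lambda_i$.
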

	
	\begin{proof}
		We begin by showing that we may assume that $\vec 0 \in \relint \left( \conv \left(\textstyle \bigcup_{i \in [n]} G_i \right) \right)$. Fix some $(\sample, \theta)$ in $U_{\CType,\Theta}$, and assume the contrary, that $\vec 0 \notin \relint \left( \conv \left(\textstyle \bigcup_{i \in [n]} G_i \right) \right)$.
		
		Then there is some $\vec v$ such that for all $i$, $\vec g \in G_i$, we have $\vec g^{\T}\vec v\leq 0$ and there is some $i, \vec g \in G_i$ such that $\vec g^{\T}\vec v< 0$. We can therefore bound the directional derivatives of each term in the Fr\'echet function:
		\begin{align*}
			\partial_{\vec v}(d(\theta,X_i)^2) = \sup_{\vec g \in G_i} [d(\theta, X_i)\vec g^{\T}\vec v] \leq 0, \qquad \text{ and } \qquad
			\partial_{-\vec v}(d(\theta,X_i)^2) = \sup_{\vec g \in G_i} [-d(\theta, X_i)\vec g^{\T}\vec v] \geq 0.
		\end{align*}
		As $\theta$ is a Fr\'echet mean of $X_1, \dots X_n$, every directional derivative of $f$ at $\theta$ is non-negative. That is:
		\begin{align*}
			0 &\leq \partial_{\vec v} f(\theta) = \textstyle \frac{1}{n} \sum_{i \in [n]} \sup_{\vec g \in G_i} [d(\theta, X_i)\vec g^{\T}\vec v] \leq 0.
		\end{align*}
		The $\vec v$-directional derivative of every $d(\theta,X_i)^2$ term is therefore $0$. To compute the directional derivatives explicitly, we note that there is some $\epsilon>0$ and $\vec a_i \in A$ such that, for $\lambda \in [0,\epsilon]$, we have $d(\theta+\lambda \vec v,X_i) = \vec a_i^{\T}(\theta + \lambda \vec v - X_i)$. Then:
		\begin{align*}
			0 &= \partial_{\vec v} (d^2(\theta,X_i)), \\
			&= \left(\frac{\partial}{\partial \lambda}\right)_{\lambda = 0} [\vec a_i^{\T}(\theta + \lambda \vec v - X_i)]^2, \\
			&= (\vec a_i^{\T}\vec v) \cdot d(\theta, X_i).
		\end{align*}
		As discussed in \cref{rmk:positive_dist_to_data}, the fact that the face type is well defined for $\sample$ implies that $d(\theta, X_i)>0$. Therefore we have that $\vec a_i^{\T}\vec v = 0$, and for every $i$:
		\begin{align*}
			d(\theta+\lambda \vec v,X_i) &= \vec a_i^{\T}(\theta + \lambda \vec v - X_i), \\
			&= \vec a_i^{\T}(\theta - X_i), \\
			&= d(\theta,X_i),
		\end{align*}
		which shows that $\theta + \lambda \vec v \in \FMset$ for all $\lambda \in [0,\epsilon]$. 
		By the definition of $U_{\CType,\Theta}$ we know $\theta \in \relint \FMset$, and so $\theta - \lambda \vec v$ must also be in $\FMset$ for sufficiently small $\lambda$. Hence
		\begin{align*}
			0 &= \partial_{-\vec v} f(\theta) =\textstyle \frac{1}{n} \sum_{i \in [n]} \sup_{\vec g \in G_i} [-d(\theta, X_i)\vec g^{\T}\vec v].
		\end{align*}
		The direction $\vec v$ was chosen such that for all $i$, $\vec g \in G_i$, $\vec g^{\T}\vec v\leq 0$, with strict inequality in at least one case. But from the equality above, we conclude that for all $i$, and $\vec g \in G_i$, $-d(\theta, X_i)\vec g^{\T}\vec v = 0$, which contradicts the definition of $\vec v$. Hence we have $\vec 0 \in \relint \left( \conv \left(\textstyle \bigcup_{i \in [n]} G_i \right) \right)$.
		
		By \cite[Theorem 6.9]{rockafellar1970convex}, $\vec 0 \in \relint \left( \conv \left(\textstyle \bigcup_{i \in [n]} G_i \right) \right)$ implies that there exist $\vec g_i' \in \relint G_i$ and $\lambda_i > 0$ such that:
		\[
		\vec 0 = \sum_i \lambda_i \vec g_i'.
		\]
		Also, as $(X_1,\dots, X_n,\theta) \in U_{F_1,\dots, F_n,\Theta}$, we know that $\theta$ is a Fr\'echet mean and $\vec 0 \in \sum_{i \in [n]} \| X_i - \theta \|_B G_i$. Then there also exist $\vec g_i \in G_i, w_i = \|X_i - \theta\|_B > 0$ such that $\vec 0 = \sum_i w_i \vec g_i$.
		Interpolating between these expressions for $\vec 0$ we get:
		\[
		\vec 0 = \sum_i (1-\delta)w_i \vec g_i + \delta \lambda_i \vec g_i' = \sum_i \left((1-\delta)w_i + \delta \lambda_i\right) \left( \frac{(1-\delta)w_i}{(1-\delta)w_i + \delta \lambda_i} \vec g_i + \frac{\delta \lambda_i}{(1-\delta)w_i + \delta \lambda_i} \vec g_i' \right) .
		\]
		As $\vec g_i'$ is in the relative interior of $G_i$, any point on the line segment between $\vec g_i'$ and $\vec g_i$ is also in the relative interior. In particular, for $\delta \in (0,1]$:
		\[
		\frac{(1-\delta)w_i}{(1-\delta)w_i + \delta \lambda_i} \vec g_i + \frac{\delta \lambda_i}{(1-\delta)w_i + \delta \lambda_i} \vec g_i' \eqqcolon \vec g_i'' \in \relint G_i.
		\]
		By picking $\delta >0$ sufficiently small, we can therefore write $\vec 0 = \sum_{i \in [n]} w_i' \vec g_i''$ where $\vec g_i'' \in \relint G_i$ and $w_i' \coloneqq (1-\delta)w_i + \delta \lambda_i$ satisfies $|w_i' - w_i| < \epsilon, w_i'>0$. \\
		We now define $Y_i = \theta + \frac{w_i'}{w_i}(X_i-\theta)$. We will show that $\| Y_i - X_i \|_B < \epsilon$ and $(Y_1, \dots, Y_n,\theta) \in V_{\CType, \Theta}$.
		By the definitions of $Y_i$, $w_i$, and $w_i'$:
		\begin{align*}
			\|Y_i - X_i\|_B &= |w_i'/w_i - 1| \|X_i - \theta\|_B, \\
			&= |w_i' - w_i|, \\
			&< \epsilon.
		\end{align*}
		We also have $\|Y_i - \theta\|_B = w_i'$ and $Y_i - \theta = \tfrac{w_i'}{w_i}(X_i-\theta) \in \relint R_+F_i$ because $(\sample,\theta) \in U_{\CType,\Theta}$.
		By the construction of $w_i', \vec g_i''$:
		\begin{align*}
			0 &\in \textstyle\sum_{i \in [n]} w_i' \relint G_i = \relint \sum_{i \in [n]} w_i' G_i = \relint \sum_{i \in [n]} \|Y_i - \theta\| G_i.
		\end{align*}
		Hence $Y_1,\dots,Y_n,\theta$ is in $V_{\CType,\Theta}$ with $\|X_i-Y_i\|_B < \epsilon$.
	\end{proof}
	
	We use these results to prove \cref{prop:lifted_type_event}.
	
	\begin{proof}[Proof of \cref{prop:lifted_type_event}]
		For each $i$, pick some $\vec a_i$ in $\A(F_i)$. Then for all $X_1,\dots X_k, \theta$ such that $X_i - \theta \in \relint C_i$, we have that $\| X_i -\theta \| = \vec a_i^{\T}(X_i - \theta)$. By the definition of $W$, we conclude that
		\[
		\vec 0 \in \relint \left( \textstyle \sum_{i \in [n]} \|X_i- \theta\|G_i \right) \;\;\text{ if and only if }\;\; \left(\vec a_1^{\T} (X_1-\theta), \dots, \vec a_n^{\T} (X_n-\theta)\right) \in W.
		\]
		Let $L_{\CType}:\R^{k(n+1)}\rightarrow \R^n$ be the linear map $(\sample,\theta) \mapsto \left(\vec a_1^{\T} (X_1-\theta), \dots, \vec a_n^{\T} (X_n-\theta)\right)$, so:
		\[
		V_{\CType, \Theta} =   L_{\CType}^{-1}(W) \; \textstyle\bigcap \; \{ (\sample, \theta) : X_i - \theta \in \relint C_i, \; i = 1,\ldots,n \}.
		\]
		The set $\{(X_1,\ldots,X_n,\theta): X_i - \theta \in \relint C_i \}$ is a relatively open polyhedral cone, as is $L_{\CType}^{-1}(W)$ because $W$ is a relatively open polyhedral cone. So $V_{\CType, \Theta}$ is a relatively open polyhedral cone. 
		
		By Lemma \ref{lemma:RelintGrad_to_RelintFMset}, we deduce that $V_{\CType,\Theta} \subseteq U_{\CType,\Theta}$. Taking $\relint$ and $\closure$ on both sides:
		\begin{align*}
			\closure V_{\CType\Theta} \subseteq \closure U_{\CType\Theta}, \qquad 
			\relint V_{\CType,\Theta} \subseteq \relint U_{\CType,\Theta}.
		\end{align*}
		Lemma \ref{lemma:U_in_ClV} tells us that $U_{\CType,\Theta} \subseteq \closure V_{\CType,\Theta}$. Again, taking $\relint$ and $\closure$ on both sides:
		\begin{align*}
			\closure U_{\CType\Theta} \subseteq \closure V_{\CType\Theta}, \qquad
			\relint U_{\CType,\Theta} \subseteq \relint V_{\CType,\Theta}.
		\end{align*}
		Where the final line is due to the convexity of $V_{\CType,\Theta}$. The result follows.
	\end{proof}
	
\end{document}